\theoremstyle {definition}  \newtheorem {props} {Propositions} [section]
\theoremstyle {plain} \newtheorem {corollary} [props] {Corollary}
\theoremstyle {plain} \newtheorem {definition} [props] {Definition}
\theoremstyle {plain} \newtheorem {lemma} [props] {Lemma}
\theoremstyle {plain} 
\theoremstyle {plain} 
\theoremstyle {plain} \newtheorem {proposition} [props] {Proposition}
\theoremstyle {plain} \newtheorem {question} [props] {Question}
\theoremstyle {plain} \newtheorem {remark} [props] {Remark}
\theoremstyle {plain} \newtheorem {theorem} [props] {Theorem}
\newtheorem*{theorem*}{Theorem}
\newtheoremstyle{break}  
  {}   
  {}   
  {\itshape}  
  {}       
  {\bfseries} 
  {}         
  {\newline}  
  {#1: #3}          
\theoremstyle{break}
\newtheorem{fact}{Fact}
\newcommand{\id}{\mathrm{Id}}
\newcommand{\rcsubset}{\subset\subset}
\newcommand{\bound}{\partial}
\newcommand{\trace}{\mathrm{tr}\ }
\newcommand{\normes}[1]{\left| #1 \right|}
\newcommand{\ddt}{\frac{d}{dt}}
\newcommand{\dmdtm}{\frac{d^-}{d t^-}}
\newcommand{\ddtt}{\frac{d^2}{d t^2}}
\newcommand{\del}{\partial}
\newcommand{\dd}{\mathrm{d}}
\newcommand{\vol}[1]{\mathrm{vol}_{#1}\ }
\newcommand{\riem}{\mathrm{Rm}\ }
\newcommand{\sect}[1]{\mathrm{K_{#1}}}
\newcommand{\ricci}{\mathrm{Rc}\ }
\newcommand{\scal}{\mathrm{Sc}\ }
\newcommand{\IC}{\mathrm{IC}\ }
\newcommand{\dvol}[1]{\mathrm{dV}_{#1}}
\newcommand{\inj}[1]{\mathrm{inj_{#1}}\ }
\newcommand{\setE}{\mathbb{E}}
\newcommand{\setN}{\mathbb{N}}
\newcommand{\setR}{\mathbb{R}}
\newcommand{\setS}{\mathbb{S}}
\newcommand{\calC}{\mathcal{C}}
\newcommand{\calD}{\mathcal{D}}
\newcommand{\calH}{\mathcal{H}}
\newcommand{\calI}{\mathcal{I}}
\newcommand{\calK}{\mathcal{K}}
\newcommand{\calM}{\mathcal{M}}
\newcommand{\calN}{\mathcal{N}}
\newcommand{\calU}{\mathcal{U}}
\newcommand{\calV}{\mathcal{V}}
\newcommand{\calW}{\mathcal{W}}
\newcommand{\calX}{\mathcal{X}}
\newcommand{\calY}{\mathcal{Y}}
\newcommand{\gothl}{\mathfrak{l}}
\newcommand{\gothm}{\mathfrak{m}}
\newcommand{\gothn}{\mathfrak{n}}
\newcommand{\tc}{\tilde{c}}
\newcommand{\td}{\tilde{d}}
\newcommand{\tf}{\tilde{f}}
\newcommand{\tg}{\tilde{g}}
\newcommand{\tB}{\tilde{B}}
\newcommand{\tS}{\tilde{S}}
\newcommand{\tU}{\tilde{U}}
\newcommand{\tV}{\tilde{V}}
\newcommand{\tlambda}{\tilde{\lambda}}
\newcommand{\bpsi}{\overline{\psi}}
\newcommand{\bPsi}{\overline{\Psi}}
\newcommand{\bt}{\bar{t}}
\newcommand{\bx}{\bar{x}}
\newcommand{\by}{\bar{y}}
\newcommand{\bK}{\bar{K}}
\newcommand{\bQ}{\bar{Q}}
\newcommand{\ux}{\underline{x}}
\newcommand{\ut}{\underline{t}}
\newcommand{\conditionC}{(\mathcal{C})}
\newcommand{\conditionCthree}{(\mathcal{C}_3)}
\newcommand{\conditionCpn}{(\mathcal{C}''_n)}
\newcommand{\conditionCpthree}{(\mathcal{C}'_3)}
\title{Short-time existence of the Ricci flow on complete, non-collapsed $3$-manifolds with Ricci curvature bounded from below}
\author{Raphael Hochard}
\begin{document}

\maketitle

\begin{abstract}
 We prove that for any complete three-manifold with a lower Ricci curvature bound and a lower bound on the volume of balls of radius one, a solution to the Ricci flow exists for short 
 time. Actually our proof also yields a (non-canonical) way to flow and regularize some interior region of a non-complete initial data satisfying the aformentioned bounds.
\end{abstract}

\tableofcontents

\section{Introduction}

A natural question in the study the Ricci flow on complete, non-compact Riemannian manifold is to determine which conditions on the geometry of the initial data guaranties short-time existence of a solution. 
A classical result is Shi's theorem, which states that short-time existence holds true when the initial metric is complete and sectional curvature is bounded. As solving a PDE often requires establishing 
a priori estimates on the solutions, a related problem consists in bounding from below the existence time of the flow, as well as controlling the evolution of some geometric quantities for positive time, 
in term of the conditions imposed on the initial metric. For instance, Shi's theorem also asserts that the flow exists for a time interval $[0,\frac{c(n)^2}{\Lambda}]$ with additional bounds 
$\normes{ \riem g(t) } \leq 4 \Lambda$ where $\Lambda$ is the bound on the Riemann tensor at time $0$, and $c(n)$ a constant depending only on the dimension.

\bigskip

Conversely every time one can prove a uniform lower bound on the existence time of the Ricci flow for a family of compact manifolds satisfying given conditions, as well as uniform estimates, then it is 
natural to ask whether short-time existence holds true for a complete manifold satisfying the same conditions. Here we consider the particular set of initial conditions
\[ \begin{array}{cccc}
    \conditionCthree & \dim M = 3, & \ricci g_0(x) \geq -1, & \vol{g_0} B(x,1) \geq v_0.
   \end{array} \]
for all $x \in M$. Uniform estimates for the family of compact Riemannian manifolds satisfying $\conditionCthree$ were shown by Miles Simon

\begin{theorem*}[M. Simon, 2012 (\cite{Simon_2009}, theorem 1.11 or \cite{Simon_2012}, theorem 1.9)]
 There exist universal constants $\epsilon(v_0), K(v_0)>0$ and $k(v_0)$ such that the following holds true. Let $(M^3,g_0)$ be a compact Riemannian manifold satisfying $\conditionCthree$. Then the unique Ricci 
 flow on $M$ with initial data $g_0$ can be extended up to time $\epsilon^2$. Moreover, the estimates
 \[ \begin{array}{ccc}
  \ricci g_0(x) \geq -k, & \vol{g(t)} B_t(x,1) \geq \frac{v_0}{2}, & \normes{ \riem g(x,t) } \leq \frac{K}{t},
 \end{array} \]
 hold for all $x \in M$, $0 < t \leq \epsilon^2$.

\end{theorem*}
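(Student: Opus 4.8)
The plan is to use that $M$ is compact, so Hamilton's short-time existence theorem already supplies a unique Ricci flow $(g(t))_{t\in[0,T_{\max})}$, and everything reduces to a priori estimates. I would run a continuity (bootstrap) argument on the largest time $T\le\min\{T_{\max},\epsilon^{2}\}$ up to which one has, with slightly enlarged constants, the three bounds: (i) $|\mathrm{Rm}(g(t))|\le K/t$; (ii) $\mathrm{Rc}(g(t))\ge -k$; and (iii) $\mathrm{vol}_{g(t)}B_{g(t)}(x,r)\ge\kappa_{0}(v_{0})\,r^{3}$ for all $x$ and all $r\le 1$ (a uniform non-collapsing at \emph{all} scales $\le 1$, which holds at $t=0$ by $\conditionCthree$ and Bishop--Gromov, and which is the quantity that eventually closes the argument). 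One then shows that on $[0,T]$ these hold with the original constants; by openness this forces $T=\min\{T_{\max},\epsilon^{2}\}$, and a standard curvature doubling-time estimate upgrades this to $T_{\max}>\epsilon^{2}$ for $\epsilon$ small.

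Two of the three bounds are comparatively soft. For the scalar curvature, $\mathrm{Rc}(g_{0})\ge -1$ gives $\mathrm{Sc}(g_{0})\ge -3$, and $\partial_{t}\mathrm{Sc}=\Delta\mathrm{Sc}+2|\mathrm{Rc}|^{2}\ge\Delta\mathrm{Sc}+\tfrac23\mathrm{Sc}^{2}$ with the minimum principle gives $\mathrm{Sc}(g(t))\ge -3/(1+2t)\ge -3$. For the volume bound at scale $1$: using (i), the localized estimate for $\tfrac{d}{dt}\mathrm{d}_{t}(x,\cdot)$ with comparison radius of order $\sqrt t$ converts the pointwise bound $|\mathrm{Rc}|\le K/t$ into the integrable bound $|\tfrac{d}{dt}\mathrm{d}_{t}|\lesssim\sqrt{K/t}$ on distances $\le 2$, hence $|\mathrm{d}_{g(t)}-\mathrm{d}_{g_{0}}|\le C\sqrt t$ there; combined with $\partial_{t}\,dV=-\mathrm{Sc}\,dV\ge -3\,dV$ and Bishop--Gromov ($\mathrm{vol}_{g_{0}}B_{g_{0}}(x,\rho)\ge c(v_{0})\rho^{3}$ for $\rho\le 1$) this yields $\mathrm{vol}_{g(t)}B_{g(t)}(x,1)\ge v_{0}/2$ once $\epsilon$ is small. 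The refinement (iii) to all scales $r\le 1$ (including $r\ll\sqrt t$, where distance distortion is useless) is precisely Perelman's no local collapsing theorem, applied over dyadic time steps with (i) controlling the relevant parabolic neighbourhoods and with $\kappa_{0}$ depending only on the scale-$1$ non-collapsing of $g_{0}$, i.e.\ only on $v_{0}$.

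For the Ricci lower bound (ii) I would use a tensor maximum principle special to dimension $3$. With $\lambda\le\mu\le\nu$ the eigenvalues of the curvature operator, Hamilton's ODE system reads $\dot\lambda=\lambda^{2}+\mu\nu$, $\dot\mu=\mu^{2}+\lambda\nu$, $\dot\nu=\nu^{2}+\lambda\mu$; at a point where $\lambda$ is very negative one has $|\mu|\le|\lambda|$, hence $\dot\lambda\ge\lambda^{2}-|\lambda|\,\nu$, while the Hamilton--Ivey estimate (which, from $\mathrm{Rc}(g_{0})\ge -1$, forces $\mathrm{Sc}\gtrsim|\lambda|\log|\lambda|$ wherever $\lambda<0$) prevents $|\lambda|$ from growing. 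Combining the two with the reaction term $\lambda^{2}$ one builds a supersolution and concludes $\lambda(g(t))\ge -k$ for a universal $k(v_{0})$. The delicate point is that the bound coming from (i) alone is not integrable near $t=0$; it is Hamilton--Ivey pinching that kills the would-be blow-up.

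The heart of the matter is (i), which I would prove by contradiction and compactness. If it fails there are compact $(M_{j},g_{j})$ satisfying $\conditionCthree$ with the fixed $v_{0}$ and first-failure points $(x_{j},t_{j})$, $t_{j}\to 0$, with $|\mathrm{Rm}(g_{j}(t_{j}))|(x_{j})$ arbitrarily large relative to $1/t_{j}$ and (ii), (iii) still valid on $[0,t_{j})$. A Perelman-type point selection gives $(\bar x_{j},\bar t_{j})$ with $Q_{j}:=|\mathrm{Rm}(g_{j}(\bar t_{j}))|(\bar x_{j})\to\infty$, $Q_{j}\bar t_{j}\to\infty$, and $|\mathrm{Rm}|\le 4Q_{j}$ on a large parabolic neighbourhood; rescaling by $Q_{j}$ and applying Hamilton's compactness theorem (injectivity radii supplied by (iii)) produces a complete, $\kappa$-non-collapsed, \emph{ancient} Ricci flow on a $3$-manifold with bounded curvature, non-flat at the basepoint, and with $\mathrm{Rm}\ge 0$ by Hamilton--Ivey pinching in the limit --- hence a $3$-dimensional $\kappa$-solution in the sense of Perelman. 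But (iii), carried through the rescaling, gives $\mathrm{vol}\,B(\cdot,r)\ge\kappa_{0}r^{3}$ for \emph{all} $r$ in the limit, i.e.\ a strictly positive asymptotic volume ratio, whereas every non-flat $3$-dimensional $\kappa$-solution (spherical space form, round cylinder or its $\mathbb{Z}_{2}$-quotient, Bryant soliton) has asymptotic volume ratio zero: contradiction. (Alternatively, once the rescaled geometry is seen to be almost Euclidean at a definite scale one could invoke Perelman's pseudolocality theorem directly; in either route the point is to manufacture, from the weak hypotheses of $\conditionCthree$, the strong input these theorems require.) I expect this manufacturing step --- propagating the scale-$1$, Ricci-bounded-below non-collapsing of the initial data to effective non-collapsing at the blow-up scale with constants depending on $v_{0}$ alone --- to be the true obstacle.
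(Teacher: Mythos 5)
First, note that the paper does not prove this statement at all: it is quoted as background and attributed to M.~Simon (\cite{Simon_2009}, \cite{Simon_2012}); the closest the paper comes to it is its own local machinery (propositions \ref{main_I}, \ref{main_II}, \ref{main_proposition}, corollary \ref{ricci_local}), which reproves analogous estimates in a non-complete setting. Your overall architecture --- bootstrap the three estimates, get the $\frac{K}{t}$ bound by point-picking, rescaling and ruling out a non-flat ancient limit with positive asymptotic volume ratio --- is indeed the same strategy as propositions \ref{main_I}--\ref{main_II}, and that part of your sketch is sound (in the compact case the non-uniformity of the initial sectional lower bound is indeed washed out by the $(1+t)$ term in Hamilton--Ivey, or one can quote B.-L.~Chen's theorem that $3$-dimensional ancient solutions have $\riem \geq 0$, as the paper does). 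But the two steps you treat as soft are exactly where the theorem lives, and as written they have genuine gaps.

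For the Ricci lower bound (ii), classical Hamilton--Ivey pinching requires the lowest eigenvalue of the \emph{curvature operator} to be bounded below at $t=0$; under $\conditionCthree$ one only has $\ricci g_0 \geq -1$, and the sectional lower bound of $g_0$ is completely uncontrolled within the class, so the constant you would extract degenerates precisely in the small-time regime where you need the uniform bound $-k(v_0)$. What is actually needed is a Hamilton--Ivey-type estimate for the lowest eigenvalue of the \emph{Ricci} tensor assuming only an initial Ricci lower bound --- this is Z.H.~Zhang's theorem, which the paper states as proposition \ref{ricci_pinching} and exploits via corollary \ref{ricci_local}; your parenthetical ``Hamilton--Ivey forces $\scal \gtrsim |\lambda|\log|\lambda|$ from $\ricci g_0 \geq -1$'' assumes this nontrivial ingredient without proof, and the ODE heuristic ``$|\mu| \leq |\lambda|$ when $\lambda$ is very negative'' is not justified for the curvature operator. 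For the non-collapsing (iii), asserting that it ``is precisely Perelman's no local collapsing theorem with $\kappa_0$ depending only on $v_0$'' hides the crux: Perelman's $\kappa$ depends on the initial metric through its entropy/log-Sobolev functional, and making it depend on $v_0$ alone under merely $\ricci \geq -1$ plus unit-ball volume requires a uniform local log-Sobolev (or isoperimetric) input together with a localized version of the theorem --- none of which you supply, and which you yourself identify as ``the true obstacle''. The paper resolves exactly this point by an entirely different and more elementary mechanism: the metric lemma \ref{technical_lemma} (in the form of corollary \ref{reformulation}), which shows that the identity map from $(B,g(0))$ to $(B,g(t))$, being almost non-contracting (distance distortion $\lesssim \sqrt{Kt}$) and almost volume non-increasing (scalar curvature pinching), transports the initial isoperimetric inequality coming from $v_0$ into a volume lower bound at scale $\sqrt{t}$, after which the Ricci pinching and Bishop--Gromov give all smaller scales (proposition \ref{main_I}). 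Until you either prove a $v_0$-uniform localized no-local-collapsing statement or substitute an argument of this comparison type, and until you replace classical Hamilton--Ivey by its Ricci-only analogue, the proposal is an outline of the known strategy rather than a proof.
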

When $(M,g_0)$ is a complete non compact manifold, the same author proved that whenever Shi's solution is available -that is, when $g_0$ has bounded sectional curvature\footnote{in fact he could even extend the proof of short time
existence to a class of initial data with unbounded curvature, but satisfying another set of assumptions, including that all critical point of the distance-to-the-origin function lie in a compact set, see condition $(\tc)$ in 
\cite{Simon_2012}.}- it can be extended up to a uniform time $\epsilon(v_0)^2$, with similar estimates. In this paper, we essentially remove this hypothesis of bounded curvature at initial time, proving

\begin{theorem} \label{main_corollary}
 There exists $\epsilon_0(v_0)>0$ such that if $(M^3,g_0)$ is a complete Riemannian manifold satisfying $\conditionCthree$, then there exists a complete Ricci flow on $M \times [0,\epsilon_0^2]$ with $g(0)=g_0$.
\end{theorem}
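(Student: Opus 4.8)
The plan is to exhaust $(M,g_0)$ by an increasing sequence of relatively compact open sets, flow suitably modified compact approximations using Simon's uniform estimates, and extract a limiting flow via a Hamilton--Cheeger--Gromov compactness argument. The obstacle is that the closure of a relatively compact piece of $(M,g_0)$ is not a closed manifold, and capping it off to make it compact will destroy the Ricci bound near the boundary, so one cannot directly apply the boxed theorem. I would therefore first prove a \emph{local} version: if the hypotheses $\conditionCthree$ hold on a large metric ball $B(p,R)$ (with $R$ large compared to universal constants), then there is a Ricci flow defined on a slightly smaller ball $B(p,R-1)$ for time $\epsilon_0^2$, with the three Simon-type estimates (Ricci lower bound, volume lower bound, $\mathrm{Rm}$ bounded by $K/t$) holding on an interior region. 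This is exactly the ``interior regularization'' statement advertised in the abstract, and it is the technical heart of the paper; its proof presumably uses a pseudolocality-type argument together with the fact that the bad effects of the artificial boundary propagate inward at finite speed under the flow, so they do not reach the interior within time $\epsilon_0^2$.

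\smallskip

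Granting such a local statement, the proof of Theorem~\ref{main_corollary} proceeds as follows. Fix a basepoint $p \in M$ and, for each $j \in \setN$, consider the ball $B(p,j)$; since $(M,g_0)$ is complete and satisfies $\conditionCthree$ everywhere, the hypotheses of the local statement hold on each such ball (indeed on all of $M$). Applying it yields a Ricci flow $g_j(t)$, $t \in [0,\epsilon_0^2]$, on an interior region containing $B(p,j-1)$, with the uniform estimates
\[ \ricci g_j(x,t) \geq -k(v_0), \qquad \vol{g_j(t)} B_t(x,1) \geq \tfrac{v_0}{2}, \qquad \normes{ \riem g_j(x,t) } \leq \tfrac{K(v_0)}{t} \]
holding for $x$ in a region that grows to fill $M$ as $j \to \infty$. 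On any fixed compact set $\Omega \rcsubset M$ and any fixed $t_0 \in (0,\epsilon_0^2]$, these bounds — together with the volume lower bound, which via Bishop--Gromov and the two-sided curvature control on $[t_0,\epsilon_0^2]$ gives an injectivity radius lower bound at time $t_0$ — put us in a position to apply Hamilton's compactness theorem (after shifting time so that $t_0$ becomes the initial time), plus Shi's local derivative estimates to get higher-order bounds. A diagonal argument over an exhaustion $\Omega_j \nearrow M$ and a sequence $t_0 \downarrow 0$ then extracts a subsequence of the $g_j$ converging in $C^\infty_{\mathrm{loc}}$ on $M \times (0,\epsilon_0^2]$ to a Ricci flow $g(t)$.

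\smallskip

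It remains to check that $g(t) \to g_0$ as $t \downarrow 0$, so that setting $g(0) = g_0$ gives a solution on $M \times [0,\epsilon_0^2]$, and that $g(t)$ is complete for each $t>0$. Completeness at positive times follows from the uniform lower Ricci bound $\ricci g(t) \geq -k$ together with the volume lower bound: these prevent the metric from degenerating, and a standard argument (comparing $g(t)$ to $g(t')$ for $0<t'<t$ using the evolution equation $\partial_t g = -2\,\ricci g$ and the bound $\ricci g \leq \tfrac{CK}{t}$ coming from $\normes{\riem g} \le K/t$ in dimension three) shows that distances, hence completeness, are controlled. For convergence to the initial data one uses the same two-sided control on $\ricci g(t)$ on shrinking time intervals: since $-k \le \ricci g(x,t)$ and $\ricci g(x,t) \le C K / t$ is integrable near $t=0$ in the weak sense needed (more precisely one uses that $\int_0^\tau \normes{\ricci g(t)}\,dt$ is controlled on regions where one additionally has, from Simon's estimates, a bound of the form $\normes{\riem g(t)} \le C/\sqrt{t}$ near the initial time on a smaller region), the metrics $g(t)$ form a Cauchy family in the uniform local topology as $t \downarrow 0$ and the limit is $g_0$; the fact that the limit is exactly $g_0$ rather than some rougher metric uses that the convergence $g_j(0) = g_0$ is genuine on each approximating region. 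The hard part of the whole argument, as noted, is the local short-time existence and interior estimates statement — controlling the flow of an incomplete, unbounded-curvature initial datum long enough to run the compactness machinery — and everything after that is a relatively standard exhaustion-and-limit procedure.
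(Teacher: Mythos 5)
Your plan reduces everything to a ``local interior regularization'' statement, but that statement is precisely theorem \ref{main_theorem}, i.e.\ the actual content of the paper, and your proposal offers no proof of it: the sketched mechanism (``pseudolocality plus finite inward propagation of boundary effects'') is not viable as stated. Ricci flow has no finite propagation speed, and Perelman-type pseudolocality requires pointwise smallness or almost-Euclidean control of $g_0$ at time zero, which is unavailable here since $\conditionCthree$ allows the curvature of $g_0$ to be unbounded; indeed even producing \emph{some} flow of an incomplete, unbounded-curvature initial datum is already nontrivial. The paper's route is entirely different: one builds a ``partial Ricci flow'' (theorem \ref{theorem_partial_flow}) by iteratively completing the metric near the boundary by a conformal change (lemma \ref{lemme_chirurgie}) and applying Shi's theorem on short time steps, and one then shows that its domain must contain the parabolic region $\calM_{A,\epsilon^2}$ by combining the maximality property of that construction with the a priori estimates of proposition \ref{main_proposition} --- which themselves rest on the $\delta$-non-contracting volume lemma \ref{technical_lemma}, the local Ricci pinching of corollary \ref{ricci_local}, and the point-picking/ancient-solution argument of proposition \ref{main_II}. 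Without a substitute for this machinery, the first (and hard) step of your argument is missing.

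Two further remarks. First, once one has the interior statement in the paper's form, valid for arbitrary, possibly incomplete $(M,g_0)$, the complete case requires no exhaustion, no Hamilton compactness and no diagonal argument: if $g_0$ is complete then every ball $B_{g_0}(x,A\sqrt t)$ is relatively compact, so $\calM_{A,\epsilon^2}=M\times[0,\epsilon^2]$, and completeness of the slices $g(t)$ follows from the distance estimate $d_t\geq d_0-\frac{20}{3}(n-1)\sqrt{Kt}$ (corollary \ref{corollaire_distorsion_distances}), which places any $g(t)$-ball inside a relatively compact $g_0$-ball. Second, your argument for attainment of the initial data is incorrect as written: there is no $C/\sqrt t$ curvature bound in this setting (the Simon-type estimates give only $K/t$, which is not integrable at $t=0$, so $\int_0^\tau\normes{\ricci g(t)}\,dt$ is not controlled by those bounds alone and the family $g(t)$ is not obviously Cauchy as $t\downarrow 0$). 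The correct mechanism, used in step (iii) of the proof of proposition \ref{proposition_piecewise_flow}, is local: on any compact set $g_0$ has bounded geometry, so Chen's estimate (proposition \ref{misc_0}) upgrades the $K_1/t$ bound to a uniform-in-$t$ curvature bound near $t=0$ on a slightly smaller compact set, and Shi-type local derivative estimates then give $\normes{g(t)-g_0}_{g_0}\leq\Lambda t$ locally; in the paper the continuity at $t=0$ is built into the construction of the partial flow rather than recovered afterwards from the $K/t$ estimates.
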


The proof relies on a general construction, valid for any dimension, which provides us with a way of flowing an even a non-complete initial data in a practically useful manner. We shall call this construction, which we introduce in 
section \ref{section_results}, ``partial Ricci flow''.

Theorem $\ref{main_corollary}$ removes the remaining restrictions in Miles Simon's answer for dimension three (thm 1.7 in \cite{Simon_2012}) to a question of M. Anderson, J. Cheeger, T. Colding, G. Tian (see conjecture 0.7 in 
\cite{Cheeger_Colding_1997}):

\begin{theorem}
 Any metric space arising as the Gromov-Hausdorff limit of a sequence of complete Riemannian three-manifolds $M_i$ with $ \ricci g_i \geq -1$ and $\vol{g_i} B_{g_i}(x,1) \geq v_0$ for all $x \in M_i$, is homeomorphic to a smooth
 differential manifold.
\end{theorem}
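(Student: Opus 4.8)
The plan is to feed Theorem~\ref{main_corollary}, together with the uniform estimates that its proof produces (these reproduce M.~Simon's estimates recalled above), into a smoothing-and-limiting argument: replace each $M_i$ by the Ricci flow emanating from it, pass to a limit Ricci flow on a fixed smooth manifold, and recover $X$ as the ``time-zero'' distance limit of that flow. \textbf{Step 1 (smoothing the $M_i$).} Choose basepoints $p_i\in M_i$ so that, after a subsequence, $(M_i,g_i,p_i)$ converges in the pointed Gromov--Hausdorff sense to $(X,d_X,p_\infty)$ (the compact case is subsumed). Each $(M_i,g_i)$ satisfies $\conditionCthree$, so by Theorem~\ref{main_corollary} there is a complete Ricci flow $g_i(t)$ on $M_i\times[0,\epsilon_0^2]$ with $g_i(0)=g_i$, and by the accompanying estimates there are constants $K,k$ depending only on $v_0$ with
\[ \normes{\riem g_i(\cdot,t)}\le \tfrac{K}{t},\qquad \vol{g_i(t)}B_t(\cdot,1)\ge \tfrac{v_0}{2},\qquad \ricci g_i(t)\ge -k \]
for all $t\in(0,\epsilon_0^2]$.

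\textbf{Step 2 (distance distortion).} From $\del_t g_i=-2\,\ricci g_i\le 2k\,g_i$ one gets $d_{g_i(t)}(x,y)\le e^{k(t-s)}d_{g_i(s)}(x,y)$ for $s\le t$; from $\riem g_i\le K/t$ and Hamilton's (ball-type) estimate $\frac{d^-}{dt^-}d_{g_i(t)}(x,y)\ge -c_1\sqrt{K/t}$ one gets that $t\mapsto d_{g_i(t)}(x,y)+c_2\sqrt{Kt}$ is non-decreasing, for universal $c_1,c_2$. Hence for $0\le s\le t\le\epsilon_0^2$,
\[ e^{-k(t-s)}\,d_{g_i(t)}(x,y)\ \le\ d_{g_i(s)}(x,y)\ \le\ d_{g_i(t)}(x,y)+c_2\sqrt{Kt}, \]
so the identity map $(M_i,g_i(0))\to(M_i,g_i(t))$ is a $\gamma(t)$-Gromov--Hausdorff approximation on balls of any fixed radius, with $\gamma(t)\to0$ as $t\to0$ uniformly in $i$. \textbf{Step 3 (limit Ricci flow).} For fixed $t_0\in(0,\epsilon_0^2]$, the manifolds $(M_i,g_i(t_0))$ have $\normes{\riem}\le K/t_0$ and $\vol{}B(\cdot,1)\ge v_0/2$, which by Cheeger--Gromov--Taylor forces a uniform positive lower injectivity radius bound; together with Shi's local derivative estimates along the flow this yields uniform $\Cinfty$ local bounds. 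By Hamilton's compactness theorem for Ricci flows, a subsequence of $(M_i,g_i(\cdot),p_i)$ converges, smoothly on compact subsets of $N\times(0,\epsilon_0^2]$, to a Ricci flow $(N,h(t),q)_{t\in(0,\epsilon_0^2]}$ on a fixed smooth manifold $N$, and $(M_i,g_i(t),p_i)\to(N,h(t),q)$ in the pointed Gromov--Hausdorff sense for each such $t$.

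\textbf{Step 4 ($t\to0$ and conclusion).} Passing Step~2 to the limit, $t\mapsto d_{h(t)}(x,y)+c_2\sqrt{Kt}$ is non-decreasing and $d_{h(t)}(x,y)\le e^{k(t-s)}d_{h(s)}(x,y)$, so $d_0(x,y):=\lim_{t\to0^+}d_{h(t)}(x,y)$ exists and satisfies $e^{-kt}d_{h(t)}\le d_0\le d_{h(t)}+c_2\sqrt{Kt}$ for every $t>0$. These two inequalities show $d_0$ is a genuine metric on $N$ inducing the same topology as the smooth structure of $N$, so $(N,d_0)$ is homeomorphic to a smooth manifold. Finally, combining Steps~2--3 with a triangle inequality for pointed Gromov--Hausdorff distance (let $i\to\infty$, then $t\to0$) gives $d_{GH}((X,d_X,p_\infty),(N,d_0,q))=0$, i.e.\ $(X,d_X)$ is isometric to $(N,d_0)$. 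Hence $X$ is homeomorphic to the smooth manifold $N$.

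\textbf{Main obstacle.} The substantive input is Theorem~\ref{main_corollary} and its accompanying uniform estimates — the core of the paper. Granting those, the delicate point is the limit $t\to0$ in Step~4: the Riemannian metrics $h(t)$ genuinely degenerate as $t\to0$ (the curvature bound is only $K/t$), so no smooth limit is available there; one must instead extract the topology from the distance functions $d_{h(t)}$, whose convergence is supplied by the monotonicity of $d_{h(t)}(x,y)+c_2\sqrt{Kt}$, and match the limit with $d_X$ via the uniform-in-$i$ distortion estimate of Step~2. A secondary technical point is verifying the hypotheses of Hamilton's compactness theorem (uniform injectivity radius from non-collapsing plus bounded curvature, and uniform curvature/derivative bounds on compact subintervals of $(0,\epsilon_0^2]$).
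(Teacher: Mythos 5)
Your overall strategy --- flow each $M_i$ using Theorem \ref{main_corollary}, pass to a limit Ricci flow by Hamilton compactness, and identify $X$ with the $t\to 0$ distance limit of that flow --- is exactly the deduction the paper intends (the paper gives no proof of this statement itself; it defers to M.~Simon's argument for Theorem 1.7 of \cite{Simon_2012}, with Theorem \ref{main_corollary} supplying the missing existence result). The gap is in Step 1: you credit Theorem \ref{main_corollary} with ``accompanying estimates'' $\ricci g_i(t)\ge -k$ and $\vol{g_i(t)} B_t(\cdot,1)\ge v_0/2$ with $k=k(v_0)$. The paper does not provide these. What Theorem \ref{main_theorem} actually yields for the flow it constructs is $\normes{\riem g(t)}\le K/t$, $\ricci g(t)\ge -1/t$, and non-collapsing only at scales $r\le\sqrt t$ ($\vol{g(t)} B_t(x,r)\ge v r^3$). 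The volume point is harmless: for Step 3 non-collapsing at scale $\sqrt{t_0}$ together with $\normes{\riem}\le K/t_0$ gives the injectivity radius bound via Cheeger--Gromov--Taylor. But the uniform lower Ricci bound is the linchpin of Steps 2 and 4: it is what gives $d_{g_i(t)}\le e^{k(t-s)}d_{g_i(s)}$ uniformly down to $s=0$. With the bound the paper actually supplies, $\ricci g(t')\ge -1/t'$, the same integration gives only $d_{g_i(t)}\le (t/s)\,d_{g_i(s)}$, which degenerates as $s\to 0$; Corollary \ref{ricci_local} does not help, since it again produces only $-\alpha/t$. Hence the expansion half of your distance-distortion estimate --- and with it the uniform Gromov--Hausdorff approximation of Step 2, the non-degeneracy of $d_0$, the topology comparison, and the identification $X\cong(N,d_0)$ in Step 4 --- is not justified by anything in the paper. (The contraction half, $d_t\ge d_0-c\sqrt{Kt}$, is fine by Corollary \ref{corollaire_distorsion_distances}.) Note also that you cannot simply quote the $\ricci\ge -k$ estimate from Simon's theorem recalled in the introduction: that statement is for compact manifolds, and the complete flow of Theorem \ref{main_corollary} is not obtained as a limit of such flows --- avoiding that approximation is precisely the point of the paper.

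To close the gap you must either (a) prove or invoke a genuine additional lemma, special to dimension three, upgrading $\ricci g(t)\ge -1/t$ to a uniform bound $\ricci g(t)\ge -k(v_0)$ given $\ricci g_0\ge -1$ and $\normes{\riem g(t)}\le K/t$ (this is one of Simon's key estimates, later localized by Simon--Topping as a ``double bootstrap'' lemma, and it is not a formal consequence of the pinching results in Appendix A); or (b) give up the almost-isometric identification and settle for a bi-Lipschitz one, using the second conclusion of Lemma \ref{technical_lemma} (or Corollary \ref{reformulation}), which bounds $d_{g(t)}$ above by $C(v_0,K)\bigl(d_{g(0)}+\sqrt{Kt}\bigr)$ at the relevant scales; since the theorem only asserts a homeomorphism this can suffice, but Steps 2 and 4 must then be reworked (the limit map $X\to N$ is only bi-Lipschitz, and the surjectivity and topology comparison need separate arguments). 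As written, the proposal silently assumes the strongest of Simon's estimates for a flow that this paper constructs by entirely different means, and that assumption is where the remaining substantive work lies.
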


To conclude this introduction, let us mention some alternative conditions one can impose on the initial data, and the results obtained so far (see \cite{Topping_2014} for a survey of Ricci flows with unbounded curavture). For a 
complete pointed Riemannian manifold $(M,g_0,x_0)$ we consider
\[ \begin{array}{cccc}
    \conditionCpthree & \dim M = 3, & \ricci g_0(x) \geq 0, & \vol{g_0} B(x_0,1) \geq v_0,
   \end{array} \]   
as well as 
\[ \begin{array}{cccc}
    \conditionCpn & \dim M = n, & \IC g_0(x) \geq 0, & \vol{g_0} B(x_0,1) \geq v_0,
   \end{array} \]
where $\IC$ stands for isotropic curvature (see definition in \cite{Cabezas_Rivas_2015} for example). For condition $\conditionCpn$, short-time existence including the complete non-compact case, as well as a uniform bound on 
the existence time and estimates, were established by E. Cabezas-Rivas and B. Wilking in \cite{Cabezas_Rivas_2015}. A uniform bound on the existence time and additional estimates were proved by X. Guoyi in \cite{Xu_2015} for the 
family of those compact Riemannian manifolds satisfying condition $\conditionCpthree$, yet the non-compact case remains open, whence the following

\begin{question}
 Let $(M^3,g_0,x_0)$ be a complete non-compact manifold satisfying condition $\conditionCpthree$. Does there exist a solution to the Ricci flow equation on $M \times [0,\epsilon^2]$ for some $\epsilon>0$,
 with $g(0)=g_0$ ?
\end{question}

{\bf Acknowledgements. } I would like to thank my advisor Laurent Bessi\`{e}res for submitting this problem to me, and for his support, helpful remarks and advice throughout the writing of the paper.

\section{The main construction: a partial Ricci flow} \label{section_results}

 A natural approach to producing a flow starting from a manifold $(M,g_0)$ satisfying some condition $\conditionC$, but having unbounded curvature, is to approximate the initial data by a sequence $(M_i,g_{0,i})$ of manifolds 
 either compact or complete with bounded sectional curvature still satisfying condition $\conditionC$. For each term of the sequence short time existence of the flow is a consequence either of Hamilton-DeTurk or of Shi's theorem. 
 Then if one is able to establish a lower bound on the flow's existence time for the family of compact or complete-with-bounded-curvature manifolds satisfying $\conditionC$ as well as uniform estimates, the flow of $(M,g_0)$ can 
 be obtained as the limit of the flows $(M_i,g_i(t))$ of initial data $g_{0,i}$.

 \bigskip

 This is the approach carried over in particular by E. Cabezas-Rivas and B. Wilking in \cite{Cabezas_Rivas_2015}, yet it fails for the set of conditions $\conditionCthree$ we are interested in, as we were not able to approximate 
 the initial metric by a sequence of metrics of bounded curvature, while keeping the Ricci curvature uniformly bounded from below. As a consequence, we find no uniform existence time for the flows of the terms of the sequence, and 
 we cannot extract a limit. To circumvent this, we introduce a general construction, valid in any dimension, which we call ``partial Ricci flow'' and which allows us to flow even non-complete metrics in a way meaningful for our
 purpose. Roughly speaking, if $(M^n,g)$ is a not necessarily complete $n$-dimensional Riemannian manifold, a partial flow of $M$ is a smooth solution to the Ricci flow equation defined on  an open space-time domain $\calD \subset 
 M \times [0,1]$ which contains $M \times \{ 0 \}$ as its initial time slice.
 
 \bigskip
 
 Such a construction becomes useful when one is able to control the shape of the domain $\calD$ in term of appropriate hypothesis on the initial data $g_0$. To this effect we endow the partial flow with some additional 
 properties. Recall the definition
 
 \begin{definition}
  A Riemannian metric $g$ on $M$ is said to have controlled geometry at a scale $r>0$ on an open set $U \subset M$ if $\normes{ \riem g(x) } \leq r^{-2}$ for every $x \in U$, and $\inj{g} x \geq r$ at every point 
  $x \in U$ such that $B_g(x,r)$ is relatively compact in $U$.
 \end{definition}

 For any choice of a parameter $K_0 \geq 1$, the partial flow will be constructed so that firstly, $g(t)$ has its geometry controlled a priori at scale $C^{-1} \sqrt{ \frac{t}{K_0} }$, for some constant $C>>1$, on the time slice
 $\calD_t = \{ x \in M \ | \ (x,t) \in \calD \}$, for any $0<t \leq1$. Secondly, if $0< \tau \leq 1$ is a time and $U \subset \calD_\tau$ a region such that $g(t)$ has controlled geometry at the significantly larger scale $\sqrt{ 
 \frac{t}{K_0} }$ on $U$ for any $0<t \leq \tau$, then an additional space-time domain $(U)_{ \Delta \rho} \times [\tau, \tau+\Delta \tau]$ is included in $\calD$. (Where, for $U \subset M$ and $r>0$,
 \[ (U)_r = \{ x \in U \ | \ B_{g_0}(x,r) \text{ is relatively compact in } U \}. \]
 From now on, we shall use this notation whenever there is no ambiguity as to the metric relative to which such an $r$-interior region is taken). Moreover, the size of margin $\Delta \rho$ and the additional time interval 
 $\Delta \tau$ are controlled in term of the parameter $K_0$.
 
 \begin{center}
  \includegraphics{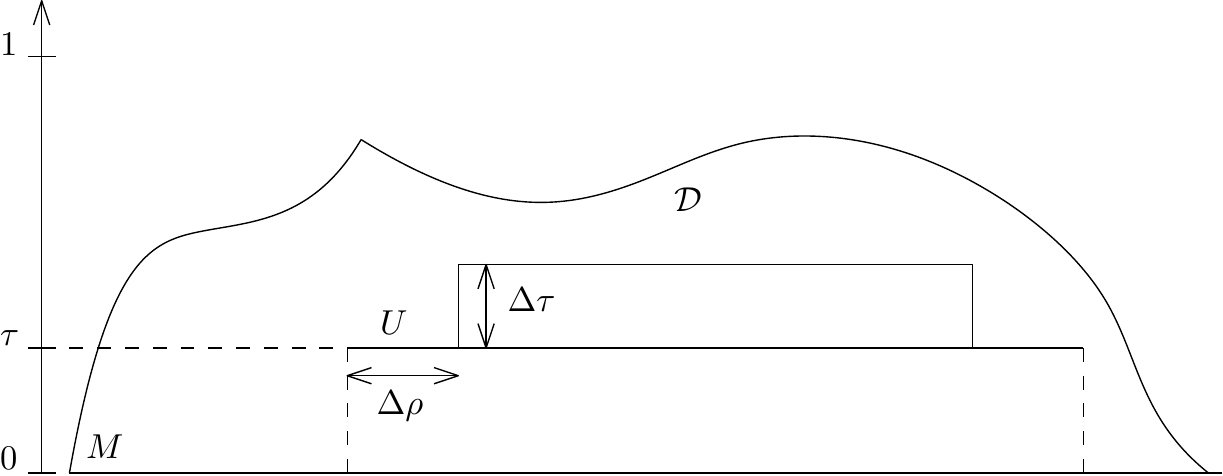}
 \end{center}

 These properties are summed up into the following existence theorem for the partial flow. 
 
 \begin{theorem}[Partial Ricci flow] \label{theorem_partial_flow}
 There exists a constant $C(n)$ with the following property. Let $(M^n,g_0)$ be a Riemannian manifold. For any choice of a parameter $K_0 \geq 1$, there exists a smooth solution $g(x,t)$ to the Ricci flow equation defined on 
 an open domain $\calD \subset M \times [0,1]$ with
  \begin{itemize}
   \item[(i)] $\calD_0 = M$, $\calD_s \supset \calD_t$ for any $0 \leq s \leq t \leq 1$,
   \item[(ii)] $g(t)$ has controlled geometry at scale $C^{-1} \sqrt{\frac{t}{K_0}}$ on $\calD_t$ for every $0<t \leq 1$, 
   \item[(iii)] For any $0<\tau \leq 1$, if $U \subset \calD_\tau$ is an open domain such that for every $0 < t \leq \tau$, $g(t)$ has controlled geometry at scale $\sqrt{\frac{t}{K_0}}$ on $U$, then the additional space-time 
   domain $(U)_{\Delta \rho} \times [\tau,\min(\tau+\Delta \tau,1)]$ is contained in $\calD$, with $\Delta \rho = C \sqrt{K_0 \tau}$, $\Delta \tau = \frac{\tau}{C^2 K_0}$.
  \end{itemize}
\end{theorem}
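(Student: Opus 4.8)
\emph{Overall strategy.} I would build $\calD$ by gluing, near $t=0$, the local Ricci flows furnished by a ``localized Shi theorem'' over a relatively compact exhaustion of $M$, and then passing to a \emph{maximal} admissible extension. Throughout, $C=C(n)$ denotes a large constant fixed only at the very end; the mechanism of the whole scheme is that the factor $C^2$ separating the ``small'' bound $\normes{\riem g(t)}\le C^2K_0/t$ guaranteed by (ii) from the ``large'' bound $\normes{\riem g(t)}\le K_0/t$ \emph{hypothesized} in (iii) is exactly the buffer needed to absorb the (purely dimensional) losses in the local estimates and in the distance-distortion estimate below.

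\emph{The local engine.} The analytic heart --- and the main obstacle --- is a lemma of the following shape: \emph{there is $C(n)$ such that if $h$ has controlled geometry at scale $r>0$ on an open set $V\subset N^n$, then there is a Ricci flow $h(\cdot,t)$ on $(V)_{r/C}\times[0,r^2/C]$ with $h(\cdot,0)=h$ such that} (a) \emph{$h(t)$ has controlled geometry at scale $c_0(n)\,r$ on $(V)_{r/C}$ for every $0\le t\le r^2/C$}; (b) \emph{in fact $\normes{\riem h(t)}\le c_0(n)^{-1}/t$ and $\inj{h(t)}x\ge c_0(n)\sqrt t$ there}; and (c) \emph{this flow is locally unique: any Ricci flow with the same initial data and bounded curvature on a subdomain agrees with $h(\cdot,t)$ there.} I would prove it by extending $h|_{(V)_{r/2}}$ to a \emph{complete} metric $\hat h$ on a manifold $\hat N$ with $\normes{\riem {\hat h}}\le C'(n)r^{-2}$ globally (a completion construction for regions of bounded geometry), running Shi's theorem on $(\hat N,\hat h)$ for existence and the uniform bound in (a), obtaining the time-weighted bound in (b) from Perelman's pseudolocality theorem --- applicable because controlled geometry at scale $r$ makes $\hat h$ almost-Euclidean in the isoperimetric sense at a definite scale $\sim r$, by a normal-coordinate (Rauch) comparison --- together with the volume non-collapsing it also furnishes and the theorem of Cheeger--Gromov--Taylor for the injectivity radius, and deducing (c) from the localized uniqueness of the Ricci flow (Kotschwar; Chen--Zhu). \emph{Making this lemma rigorous --- in particular the pseudolocality step and the uniqueness (c) near $t=0$, where curvature is controlled only by $c_0^{-1}/t$ --- is where essentially all the difficulty lies.}

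\emph{Construction of $\calD$.} Exhaust $M=\bigcup_j\Omega_j$ with $\Omega_j\rcsubset\Omega_{j+1}$. On $\overline{\Omega_j}$ the smooth metric $g_0$ has controlled geometry at some scale $r_j>0$, so the lemma gives a Ricci flow on a neighbourhood of $\Omega_{j-1}$ over a time interval $[0,\sigma_j]$, $\sigma_j>0$, with $\normes{\riem g(t)}\le c_0^{-1}/t$ and $\inj{g(t)}x\ge c_0\sqrt t$; since $K_0\ge1$ and $C$ will be large, this entails controlled geometry at scale $C^{-1}\sqrt{t/K_0}$. By (c) these flows agree on overlaps, so they glue to a pair $(\calD^{(0)},g)$ with $\calD^{(0)}_0=M$ and time-slices decreasing (each $\Omega_j$-piece being a ``staircase''). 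Call a pair $(\calD,g)$ \emph{admissible} if $\calD\subset M\times[0,1]$ is open with $\calD_0=M$ and decreasing time-slices, $g$ is a Ricci flow on $\calD$ with $g(\cdot,0)=g_0$, and $g(t)$ has controlled geometry at scale $C^{-1}\sqrt{t/K_0}$ on $\calD_t$ for all $0<t\le1$. By (c) two admissible pairs agree on the intersection of their domains --- the point to verify being the behaviour as $t\to0$, reduced to the bounded-curvature case by comparison with the local model flows above --- so the union of a chain of admissible pairs is admissible, and Zorn's lemma yields a maximal admissible pair $(\calD,g)\supset(\calD^{(0)},g)$. Properties (i) and (ii) hold by construction.

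\emph{Property (iii).} Let $0<\tau\le1$ and let $U\subset\calD_\tau$ satisfy the hypothesis; set $\ell=\sqrt{\tau/K_0}$, $\Delta\tau=\tau/(C^2K_0)$, $\Delta\rho=C\sqrt{K_0\tau}$. Since $g(\tau)$ has controlled geometry at scale $\ell$ on $U$, the lemma applied at time $\tau$ with $r=\ell$ continues the flow on $(U)_{\ell/C}\times[\tau,\tau+\ell^2/C]$ --- the margin taken w.r.t.\ $g(\tau)$ --- with controlled geometry there at scale $c_0\ell$; moreover $\Delta\tau\le\ell^2/C$, and on $[\tau,\tau+\Delta\tau]$ one has $t\le2\tau$, so $c_0\ell\ge C^{-1}\sqrt{t/K_0}$ as soon as $C\ge\sqrt2/c_0$, whence adjoining this piece preserves admissibility. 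It remains only to check that $(U)_{\Delta\rho}$ (margin w.r.t.\ $g_0$) is contained in $(U)_{\ell/C}$ (margin w.r.t.\ $g(\tau)$). This is where the large-scale hypothesis of (iii) at \emph{every} $t\le\tau$ enters: applying Hamilton's (Perelman's) distance estimate with comparison radius $\sqrt{t/K_0}$ and the bound $\normes{\riem g(t)}\le K_0/t$ on $U$ gives $\bigl|\dmdtm d_{g(t)}(x,y)\bigr|\le C(n)\sqrt{K_0/t}$ along the flow for $x,y$ interior to $U$, so (after the routine continuity-in-$t$ argument ensuring the requisite balls stay in $U$) $d_{g_0}(x,y)\le d_{g(\tau)}(x,y)+C(n)\int_0^\tau\sqrt{K_0/t}\,dt=d_{g(\tau)}(x,y)+2C(n)\sqrt{K_0\tau}$; since $\ell\le\sqrt{K_0\tau}$, the desired inclusion follows once $C\ge 2C(n)+1$. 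Thus the admissible extension of $(\calD,g)$ covers $(U)_{\Delta\rho}\times[\tau,\min(\tau+\Delta\tau,1)]$, and by maximality this space-time set already lies in $\calD$. Taking $C=C(n)$ larger than all the dimensional thresholds collected above completes the proof. \hfill$\square$
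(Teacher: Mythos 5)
Your plan hinges on the ``local uniqueness'' claim (c) of your engine lemma, and that claim is false: Ricci flow, like any parabolic equation, has infinite speed of propagation, so a flow on a subdomain is \emph{not} determined by its initial data there. (Concretely: flow a flat ball once as part of static flat $\setR^n$, and once as part of a complete bounded-curvature extension that is non-flat outside; by Shi existence and real-analyticity of the solution at positive times, the second flow cannot remain flat on the ball, so the two local flows with identical local initial data differ instantly.) The results you invoke (Chen--Zhu, Kotschwar) are uniqueness statements for \emph{complete} bounded-curvature flows and do not localize. This gap is not cosmetic, because (c) carries the whole architecture: (1) the flows produced over the exhaustion $\Omega_j$ by ``complete-then-run-Shi'' need not agree on overlaps, so $\calD^{(0)}$ is not well defined; (2) two admissible pairs need not agree on the intersection of their domains (Zorn can still be run if you order pairs by extension, so a maximal admissible pair exists, but it is highly non-unique); and (3) most fatally, in your proof of (iii) the continuation built from $g(\tau)|_U$ by the engine lemma has no reason to coincide with the existing flow $g$ on the part of $(U)_{\ell/C}\times[\tau,\tau+\Delta\tau]$ that already lies in $\calD$, so you cannot adjoin it to obtain an admissible extension, and maximality of $(\calD,g)$ yields nothing. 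There is also a secondary unresolved point you flag yourself: comparing or extending solutions near $t=0$, where curvature is only $O(1/t)$, is outside the scope of any available uniqueness theory.

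This is precisely why the paper does not argue by abstract maximality: the maximality property (iii) is \emph{built into} a discrete-time induction. At times $t_i=(1+c/K_1)^iT$ one takes the set of points where the current metric has good geometry, makes it complete by the conformal ``push the boundary to infinity'' lemma (your completion step corresponds to this Lemma, which the paper proves rather than assumes), runs Shi's theorem on the whole resulting complete manifold (no spatial gluing ever occurs; the new flow restarts from the \emph{unmodified} metric on an inner region, which gives smoothness in time across $t_i$), and \emph{defines} the next domain to contain every point whose $C\sqrt{K_1t_i}$-ball has curvature $<K_0/t_i$ and injectivity radius $>\sqrt{t_i/K_0}$. The $1/t$ bounds of (ii) then come from the induction itself (no pseudolocality is needed), the behaviour as $t\to0$ with unbounded initial curvature is handled by taking a limit of finite constructions via Chen-type local estimates and local Shi derivative estimates, and the continuous-time statement (iii) is deduced from the discrete one using the non-doubling-time estimates of Appendix B together with the distance-distortion bound you correctly identified. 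To repair your proposal you would have to replace the Zorn/uniqueness mechanism by some such constructive bookkeeping of the domain, which is essentially the content of the paper's Section 6.
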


 The proof of this result is carried out in section $\ref{section_partial_flow}$.
 
 \bigskip
 
 The main result of this paper (of which theorem $\ref{main_corollary}$ is a special case) essentially says the following: if a (not necessarily complete) Riemannian manifold $(M^3,g_0)$ satisfies condition $\conditionCthree$, 
 then any partial flow of $(M,g_0)$ (for an appropriate choice of the parameter $K_0$) contains in its domain of definition $\calD$ a definite space-time region of the form
 \[ \calM_{A,\epsilon^2} := \{ (x,t) \in M \times [0, \epsilon^2] \ | \ B_{g_0}(x,A \sqrt{t}) \text{ is relatively compact in } M \} \]
 the parameters $A, \epsilon$ depending only on the constant $v_0$ which appears in condition $\conditionCthree$. In particular, a smooth non-complete Ricci flow exists on the domain $(M)_{A \epsilon} \times [0,\epsilon^2]$, and 
 thus, in the case when $M$ is complete, on $M \times [0,\epsilon^2]$. The full statement is 

\begin{theorem} \label{main_theorem}
 There exist functions $K(v_0)$, $v(v_0)>0$ $\epsilon_0(v_0)>0$ and $A(v_0)>0$,  such that the following holds. Let $(M^3,g_0)$ be a three-manifold equipped with a Riemannian metric which needs not be complete, satisfying the 
 following hypothesis
 \begin{align*}
  & \ricci g_0 \geq -1 \text{ on } M, \\
  & \vol{g_0} B(x,r) \geq v_0 r^3,
 \end{align*}
 for all $x \in M$, $0<r<1$ such that $B(x,r)$ is relatively compact $M$. Then there exists a solution $g(x,t)$ to the Ricci flow equation defined on $\calM_{A,\epsilon^2}$, with non-complete time slices in general, such that 
 \begin{align*}  
  & \normes{\riem g(x,t)} \leq \frac{K}{t}, \\
  & \ricci g(x,t) \geq - \frac{1}{t}, \\
  & \vol{g(t)} B_t(x,r) \geq v r^3,
 \end{align*}
 for every $(x,t) \in \calM_{A,\epsilon^2}$, and $0<r\leq \sqrt{t}$ such that $B_t(x,r)$ is relatively compact in $\calM_{A,\epsilon^2}$. 
\end{theorem}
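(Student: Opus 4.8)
The plan is to run the partial Ricci flow of Theorem~\ref{theorem_partial_flow} for a well-chosen value of the parameter $K_0 = K_0(v_0)$ and to show, by a bootstrap that is closed off by a continuity argument, that its domain $\calD$ contains $\calM_{A,\epsilon_0^2}$ together with the three estimates. The mechanism is the tension between properties (ii) and (iii): property (ii) only supplies controlled geometry at the \emph{small} scale $C^{-1}\sqrt{t/K_0}$, that is, the weak, $K_0$-dependent bound $|\riem g(t)| \le C^2 K_0/t$, whereas property (iii) --- which enlarges $\calD$ a little in time and a little inward --- requires controlled geometry at the \emph{large} scale $\sqrt{t/K_0}$. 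The point is that a \emph{local} form of M.~Simon's estimates for condition $\conditionCthree$ upgrades the weak bound to $|\riem g(t)| \le K(v_0)/t$, a bound \emph{independent} of $K_0$; so once $K_0 \ge K(v_0)$ the small scale becomes the large scale and (iii) applies. Each application of (iii) then creates just enough parabolic room for the local estimates to be used again, which runs the induction.

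\textbf{Step 1: local estimates and the choice of $K_0$.} The first task is a localized version of the estimates of \cite{Simon_2012}: there is a universal $L$ so that if $g(t)$, $t\in(0,\tau]$, is a smooth (a priori incomplete) Ricci flow whose domain contains $B_{g_0}(x_0,L\sqrt\tau)\times(0,\tau]$ and whose initial slice satisfies the hypotheses of the theorem, then on $B_{g_0}(x_0,\tfrac12 L\sqrt\tau)$ one has $|\riem g(t)| \le K/t$, $\ricci g(t) \ge -k$ and $\vol{g(t)} B_t(x,r) \ge v r^3$ for $r\le\sqrt t$, with $K,k,v$ depending only on $v_0$, together with the distance-distortion control $|d_{g(t)}(x,y) - d_{g_0}(x,y)| \lesssim \sqrt t$. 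I would derive the Ricci lower bound from a localized maximum-principle argument based on $\ricci g_0 \ge -1$, cut off by a suitable function; the curvature bound from the three-dimensional fact that $\riem$ is algebraically controlled by $\ricci$ together with Simon's $L^2$-smoothing estimates localized by cutoffs; and the volume bound from a local non-collapsing argument of Perelman-pseudolocality type. Having fixed $K=K(v_0)$, $k=k(v_0)$, $v=v(v_0)$ this way, I then choose $K_0 = K_0(v_0) \ge 1$ so large that $K_0 \ge K$ (so $|\riem g(t)| \le K_0/t = (\sqrt{t/K_0})^{-2}$) and that the Cheeger--Gromov--Taylor injectivity-radius estimate, applied to the rescaled metric $(K_0/t) g(t)$ --- curvature $\le K/K_0$, unit balls of volume $\gtrsim v$ --- yields $\inj{g(t)}(x) \ge \sqrt{t/K_0}$. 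Then the three estimates of the theorem imply controlled geometry at scale $\sqrt{t/K_0}$, with room to spare.

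\textbf{Step 2: opening the parabolic cone.} Run the partial flow with this $K_0$, and fix $\epsilon_0 = \epsilon_0(v_0)>0$ small enough that all smallness requirements of Step~1 hold and that $\tau_n \le \min(1, 1/k(v_0))$ throughout the iteration. For arbitrarily small $\tau_0>0$, property (ii) gives the flow on $\calD_{\tau_0}$ with its weak bound, which Step~1 upgrades to the three estimates on an interior region $(M)_{c\sqrt{\tau_0}}$, $c=c(v_0)$: the base case. Inductively, set $\tau_{n+1} = \tau_n(1+\tfrac{1}{C^2 K_0})$, $\Delta\rho_n = C\sqrt{K_0 \tau_n}$, $\rho_{n+1} = \rho_n + \Delta\rho_n$, and $\Omega_n = \{x\in M : B_{g_0}(x, \rho_n + L\sqrt{\tau_n})\text{ relatively compact in }M\}$. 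If the three estimates hold on $\Omega_n$ for all $t\in(0,\tau_n]$, then by Step~1 they give controlled geometry at scale $\sqrt{t/K_0}$ there; property (iii) extends the flow to $(\Omega_n)_{\Delta\rho_n}\times[\tau_n,\tau_{n+1}]$; and Step~1, applied on this new parabolic slab, re-establishes the three estimates on $\Omega_{n+1}$ for all $t\in(0,\tau_{n+1}]$ (keeping track of the parabolic buffer is where one checks $\Omega_{n+1}\subset(\Omega_n)_{\Delta\rho_n}$). Since $\tau_n = \tau_0(1+\tfrac{1}{C^2 K_0})^n$, the sum $\rho_n = \sum_{k<n} C\sqrt{K_0\tau_k}$ is geometric with ratio $\approx 1+\tfrac{1}{2C^2 K_0}$, hence $\rho_n \lesssim C^3 K_0^{3/2}\sqrt{\tau_n}$. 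Taking $A = A(v_0)$ a fixed multiple of $C^3 K_0^{3/2}$ that also absorbs the $L\sqrt{\tau_n}$-buffer and the losses in the $(\,\cdot\,)_r$-operations, every $(x,t)\in\calM_{A,\epsilon_0^2}$ is caught at some stage of the iteration; letting $T^*$ be the supremum of the times up to which the whole picture holds and feeding the iteration back in shows $T^*\ge\epsilon_0^2$, which yields the theorem --- the three estimates obtained being exactly those claimed (using $\ricci g(t) \ge -k \ge -1/t$ for $t\le 1/k(v_0)$).

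\textbf{Main obstacle.} The heart of the matter is Step~1: localizing Simon's estimates to an incomplete flow living on a \emph{moving} space-time domain. Simon's arguments are global; here one must run the maximum principle and the $L^2$/smoothing estimates inside $g_0$-balls, absorb the boundary terms produced by the cutoffs, and keep every constant dependent on $v_0$ alone --- all while simultaneously controlling distance distortion so that $g_0$-balls, $g(t)$-balls and the interior regions $(\,\cdot\,)_r$ can be compared up to losses of order $\sqrt t$. By contrast, the choice of $K_0$ via Cheeger--Gromov--Taylor, the geometric-series bookkeeping that produces the cone $\calM_{A,\epsilon_0^2}$, and the continuity argument are routine once Step~1 is in hand.
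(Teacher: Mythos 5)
Your Step 2 — the choice of $K_0$ via Cheeger--Gromov--Taylor so that the improved estimates give controlled geometry at scale $\sqrt{t/K_0}$, the repeated use of the maximality property (iii), the geometric summation of the margins $C\sqrt{K_0\tau_n}$ producing the cone $\calM_{A,\epsilon_0^2}$, and the closing continuity argument — is essentially the paper's proof of Theorem \ref{main_theorem} (the paper runs it as a supremum-time contradiction with $K_0=4K/\iota(3,v)^2$). The genuine gap is Step 1, which you rightly call the heart of the matter. First, as you state it, the lemma is unconditional: any incomplete flow on $B_{g_0}(x_0,L\sqrt\tau)\times(0,\tau]$ with good initial data would satisfy $\normes{\riem}\le K(v_0)/t$ etc. No such statement is available, and nothing in your sketch produces it; the entire design of the partial flow exists to supply the a priori hypotheses at positive times, namely $\normes{\riem g(t)}\le C^2K_0/t$ and $\inj{g(t)}\geq C^{-1}\sqrt{t/K_0}$ from property (ii). The correct local statement (the paper's Proposition \ref{main_proposition}) assumes this control with a constant $K_1$, and the whole difficulty is that the output constants $K,v$ must be independent of $K_1$ — a point your introduction acknowledges but your Step 1, as formulated and as sketched, does not address.

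Second, the tools you propose for Step 1 fail in this setting. A ``localized maximum-principle argument'' for the lowest eigenvalue of the Ricci tensor does not exist: the paper explicitly notes (Appendix \ref{appendix_local}, neckpinch example) that non-negative Ricci on a ball can become arbitrarily negative instantly; the usable substitute is Zhang's local Hamilton--Ivey-type pinching (Proposition \ref{ricci_pinching}) combined with the $K/t$ curvature bound, and it yields only $\ricci g(t)\ge-\alpha/t$ (Corollary \ref{ricci_local}), not the time-independent bound $\ricci\ge-k$ you claim. The volume lower bound cannot come from ``a local non-collapsing argument of Perelman-pseudolocality type'': Perelman's machinery requires complete flows of bounded curvature, which is exactly what is missing here; the paper instead proves a metric comparison lemma (Lemma \ref{technical_lemma}/Corollary \ref{reformulation}) for $\delta$-non-contracting, volume-non-expanding maps, using the initial isoperimetric control and a Gromov--Hausdorff limit argument, and applies it to the identity map between $g(0)$ and $g(t)$ at scale $\Lambda\sqrt t$. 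Finally, the upgrade from non-collapsing to $\normes{\riem}\le K(v_0)/t$ is not a consequence of ``$\riem$ algebraically controlled by $\ricci$ plus $L^2$-smoothing localized by cutoffs''; it is Proposition \ref{main_II}, proved by Perelman-style point-picking and the classification of three-dimensional ancient solutions with positive asymptotic volume ratio. These three ingredients, and the interlocking contradiction argument that combines them (non-collapsing at time $t$ gives the curvature bound, which gives distance and volume-element control, which feeds back into non-collapsing), constitute the substance of the paper that your proposal leaves unproven.
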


\bigskip

Finally, the following (three-dimensional) pseudo-locality type statement, which extends theorem 1.1 in \cite{Simon_2014} to the case when Ricci curvature (instead of sectional curvature) is bounded from below, comes as a 
byproduct of the proof of theorem $\ref{main_theorem}$.

 \begin{theorem} \label{three_dim_pl}
  There exist $\epsilon_{\ref{three_dim_pl}}(v_0), A(v_0), K(v_0), v(v_0)>0$ such that the following holds. Let $(M,g(t))$ be a complete Ricci flow on $[0,T]$ such that 
  \begin{align*}  
   & \sup_{M \times [t,T]} \normes{ \riem g(x,s) } < + \infty 
   \intertext{for all $0 < t \leq T$, and let $U \subset M$ be an open region where}
   & \ricci g(x,0) \geq -1, \\
   & \vol{} B_0(x,r) \geq v_0 r^3,
  \intertext{for every $0 \leq r \leq 1$ and $x \in U$. Then for any $0 < \epsilon \leq \epsilon_{\ref{three_dim_pl}}$,}
   & \normes{ \riem g(x,t) } \leq \frac{K}{t}, \\
   & \ricci g(x,t) \geq - \frac{1}{t}, \\
   & \vol{} B_t(x,r) \geq v r^3 \text{ for } 0 \leq r \leq \sqrt{t},
  \end{align*}
  for every $x \in (U)_{A \epsilon}$ where $(U)_{A \epsilon} = \{ x \in U \ | \ B_0(x,A \epsilon) \subset U \}$ and $0 < t \leq \max(\epsilon^2,T)$.
 \end{theorem}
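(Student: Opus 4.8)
The plan is to recognise Theorem \ref{three_dim_pl} as the a priori estimate half of Theorem \ref{main_theorem}, read off the given flow rather than off a constructed partial flow. Restricting $g(0)$ to $U$ turns the hypotheses into the interior form of condition $\conditionCthree$ that feeds Theorem \ref{main_theorem}, and the proof of that theorem, once the partial flow of Theorem \ref{theorem_partial_flow} is in hand, runs a continuity argument propagating the three estimates $\normes{\riem g(x,t)} \le K/t$, $\ricci g(x,t) \ge -1/t$, $\vol{g(t)} B_t(x,r) \ge vr^3$ forward in time on the shrinking regions $\{x : B_0(x,A\sqrt t)\subset U\}$. The only structural fact about the partial flow used there is that, for each $t>0$, $g(t)$ is defined with bounded curvature on the region under consideration --- a role played here, for free, by the hypothesis $\sup_{M\times[t,T]}\normes{\riem g(x,s)}<\infty$. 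So I would fix the constants $K,v,A,\epsilon_{\ref{three_dim_pl}}$ produced by the proof of Theorem \ref{main_theorem} and re-run that continuity argument with $g$ in place of the partial flow: letting $I$ be the set of $\tau\in[0,\min(\epsilon^2,T)]$ such that the three estimates hold at every $(x,t)$ with $0<t\le\tau$ and $B_0(x,A\sqrt t)\subset U$, the goal is $I=[0,\min(\epsilon^2,T)]$, which yields the stated conclusion on $(U)_{A\epsilon}$ since $A\sqrt t\le A\epsilon$ there.

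I would then check that $I$ is closed, relatively open, and nonempty. Closedness is the soft point: on compact subregions the bound $\normes{\riem g}\le K/t$ together with Shi's interior derivative estimates gives $\Cinfty_{\mathrm{loc}}$ control of $g(t)$ on closed subintervals bounded away from $0$, so the remaining (closed) conditions --- the Ricci and the volume lower bounds --- pass to the limit, and the hypothesised boundedness of curvature for $t>0$ prevents any loss of regularity in the limiting slice. For openness, if the estimates hold up to time $\tau$, then the three-dimensional, non-collapsed local estimates of Simon underlying Theorem \ref{main_theorem} upgrade them to controlled geometry at scale of order $\sqrt t$ on a slightly smaller region, whence the same estimates persist for a further time comparable to $\tau$ on a region smaller by a margin of order $\sqrt\tau$ --- exactly the mechanism encoded in property (iii) of Theorem \ref{theorem_partial_flow}, but with nothing to adjoin to a space-time domain since $g$ is already defined. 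Distance-distortion bounds of Hamilton--Perelman type, used with radius $r_0\sim\sqrt t$ and $\ricci g\le K/t$, are what track how far from $\del U$ the estimates have been propagated; integrating $\frac{d}{dt}d_t\ge -C/\sqrt t$ from $0$ to $t$ gives a margin of order $A\sqrt t$, hence the region $(U)_{A\epsilon}$ at $t=\epsilon^2$.

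The real content, and the step I expect to be the main obstacle, is nonemptiness of $I$ near $t=0$: the \emph{weak} bound $\ricci g(\cdot,0)\ge -1$ together with $v_0$-noncollapsing must force the \emph{strong} bound $\normes{\riem g}\le K/t$ for small $t$ on the interior of $U$. I would argue by contradiction and parabolic rescaling in Simon's style: from a sequence of violations $(x_i,t_i)$ with $t_i\to 0$ and $t_i\normes{\riem g}(x_i,t_i)\to\infty$, apply a point-selection of Perelman type, rescale so the curvature is $1$ at the base point, and extract a pointed limit; noncollapsing prevents the limit from degenerating, in dimension three the Hamilton--Ivey pinching estimate together with $\ricci g(\cdot,0)\ge -1$ forces the limit to have nonnegative curvature operator, and the $v_0 r^3$ volume lower bound then forces it to be flat $\setR^3$, contradicting $\normes{\riem}=1$ at the base point. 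Two points need care: (a) \emph{localisation} --- every ball entering the point-selection and the blow-up must be shown to stay inside $U$, which is precisely what the distance-distortion estimates and a sufficiently large choice of $A$ secure, and this is where an interior argument is genuinely more delicate than the compact or complete one; and (b) the possible \emph{unboundedness of curvature at $t=0$} --- the maximum-principle steps that preserve $\ricci g\ge -1/t$ and propagate the volume bound should be carried out on $[\delta,T]$ and then one lets $\delta\downarrow 0$, invoking only the hypothesised boundedness of curvature on $[\delta,T]$. Finally, for the range of $t$ beyond $\epsilon^2$ allowed in the statement, one iterates the forward step: for $t\ge\epsilon^2$ the bound $K/t$ is no larger than the one already in force at $t=\epsilon^2$ and is propagated along $g$ by Shi's estimates, so the estimates persist on a fixed interior region for as long as $g$ is defined --- in particular on $(U)_{A\epsilon}$, taking $A$ large enough and, if necessary, $\epsilon_{\ref{three_dim_pl}}$ small enough in terms of $v_0$.
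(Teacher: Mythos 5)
Your overall shape (a continuity-in-time bootstrap driven by Proposition \ref{main_proposition}) matches the paper, but the openness step has a genuine gap. You claim the estimates ``persist for a further time comparable to $\tau$'' essentially because $g$ is already defined past $\tau$, citing the mechanism of property (iii) of Theorem \ref{theorem_partial_flow}; but that property concerns the constructed partial flow and says nothing about a given flow, and mere existence of $g$ beyond $\tau$ does not supply what Proposition \ref{main_proposition} needs, namely an a priori bound $\normes{\riem g(x,t)}\le K_1/t$ together with the matching injectivity-radius bound on the relevant region for all $t$ up to some definite $\tau'\ge(1+c/K_1)\tau$. This is exactly where the hypotheses ``complete'' and ``$\sup_{M\times[t,T]}\normes{\riem g}<+\infty$'' must be consumed: in the paper, Cheeger--Gromov--Taylor turns the conclusion at time $\tau$ into an injectivity bound, Perelman's pseudolocality in the form of Corollary \ref{misc_2} then gives controlled curvature and injectivity radius on a slightly smaller region for the extra time $\epsilon_{\ref{misc_2}}^2\,\tau/K_1$, and only then can Proposition \ref{main_proposition} be re-applied on $[0,\tau']$. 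The definite multiplicative time gain is also what makes the spatial losses sum geometrically to a margin of order $A\sqrt{t}$; a qualitative ``a bit longer by continuity'' does neither job, since the inequalities are non-strict and the regions are in general non-compact.

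The second gap is the base case. By defining $I$ over the full regions $\{x\,|\,B_0(x,A\sqrt{t})\subset U\}$ you make nonemptiness as strong as the theorem itself at small times (curvature at $t=0$ is unbounded on $U$, so no continuity argument applies there), and the blow-up argument you substitute is not sound as sketched: classical Hamilton--Ivey pinching is not available from a Ricci-only lower bound (this is precisely why the paper develops Zhang's Ricci pinching, Proposition \ref{ricci_pinching}; in the blow-up of Proposition \ref{main_II} the nonnegative curvature of the limit comes from Chen's theorem on ancient three-dimensional solutions, not from pinching of the initial data), and the time-$0$ noncollapsing does not transfer to the blow-up scales at times $t_i$ without the distance-distortion and metric-lemma machinery of Proposition \ref{main_I}, which itself presupposes the $K/t$ bound up to a first violating time. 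The paper sidesteps all of this by running the iteration on $V_0=U\cap\calK$ for an arbitrary compact $\calK$, where continuity does give the starting time, and by observing that the constants $A,\epsilon$ produced by the propagation step are independent of $\calK$, so exhausting by compacts yields the conclusion on $(U)_{A\epsilon}$. Restructured that way (compact exhaustion, then Corollary \ref{misc_2} followed by Proposition \ref{main_proposition} at each step), your argument becomes the paper's proof; as written, the two steps above are missing or would fail.
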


\subsection{Outline of the proof of theorem $\ref{main_theorem}$}

Consider a Riemannian manifold $(M^3,g_0)$ satisfying conditions $\conditionCthree$. The shape of the domain $\calD$ on which a partial flow of $(M,g_0)$ is defined is controlled trough the use of the maximality property of the 
construction. Precisely, we need to show that the condition $\conditionCthree$ imposed on the initial metrics, combined with the a priori control at scale $C^{-1} \sqrt{\frac{t}{K_0}}$ on the geometry of $g(t)$, imply that the 
geometry is actually controlled at the enhanced scale $\sqrt{\frac{t}{K_0}}$, if $K_0$ is chosen adequately. This is done through the following 

\begin{proposition} \label{main_proposition}

 There exists $\epsilon_0(v_0, K_1), a(v_0,K_1), v(v_0)>0$, $K(v_0)$ with the following property. Let $g(t)$ be a Ricci flow (not necessarily complete) defined on $U \times [0, \epsilon^2]$ such that at initial time
 \begin{align*}
  & \ricci g(x,0) \geq -1, \\
  & \vol{g(0)} B_0(x,r) \geq v_0 r^3 \text{ whenever } r \leq 1 \text{ and } B_0(x,r) \text{is relatively compact in } U. 
 \intertext{Suppose moreover that at each time $0<t \leq \epsilon_0^2$,}
  & \normes{ \riem g(x,t) } \leq \frac{K_1}{t}, \text{ for } x \in U, \\
  & \inj{g(t)} x \geq \sqrt{\frac{t}{K_1}} \text{ as long as } B_t(x,\sqrt{\frac{t}{K_1}}) \text{ is relatively compact in } U.
 \intertext{Then}
  & \normes{ \riem g(x,t) } \leq \frac{K}{t}, \\
  & \ricci g(x,t) \geq -\frac{1}{t}, \\
  & \vol{g(t)} B_t(x,r) \geq v r^3 \text{ for all } 0< r \leq \sqrt{t}.
 \end{align*}
 for $x \in (U)_{a \sqrt{t}}$, $t \in ]0, \epsilon_0^2]$,  where $(U)_{a \sqrt{t}} = \{ x \in U \ | \ B_0(x,a \sqrt{t}) \text{ is relatively compact in } U \}$.
 
\end{proposition}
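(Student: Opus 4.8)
The statement is a bootstrap of the kind introduced by M.~Simon: granted the a~priori decay $|\mathrm{Rm}(g(t))|\le K_1/t$ and the injectivity radius control, one propagates the two conditions at $t=0$ into estimates on the slightly smaller region $(U)_{a\sqrt t}$ whose constants no longer feel $K_1$ --- only the margin $a$ and the final time $\epsilon_0^2$ are allowed to depend on $K_1$. I would proceed in four steps. \emph{Step 1 (lower Ricci bound).} One first shows $\mathrm{Rc}(g(t))\ge-c_1$ --- equivalently $\ge-1/t$ once $\epsilon_0$ is small --- on a region $(U)_{a_1\sqrt t}$, with $c_1$ universal. This is the three-dimensional, localized analogue of the interior Ricci lower bound in Simon's compact theorem; the point is that the reaction terms in the evolution of the smallest Ricci eigenvalue are kept under control through the bound $|\mathrm{Rm}|\le K_1/t$, via a maximum-principle argument run against a cutoff supported in $B_0(x_0,a_1\sqrt t)$ and adapted to the degeneracy of $K_1/t$ as $t\to0$, yet the surviving constant is $K_1$-free.

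\emph{Step 2 (distance distortion).} From $\mathrm{Rc}(g(t))\ge-c_1$ and $\partial_t g=-2\mathrm{Rc}$ one gets the $K_1$-free one-sided bounds $g(t)\le e^{2c_1 t}g_0\le2g_0$ and $d_{g(t)}\le e^{c_1 t}d_{g_0}$, while Hamilton's changing-distance lemma applied with $\mathrm{Rc}\le|\mathrm{Rm}|\le K_1/t$ on $\sqrt{t/K_1}$-balls gives $d_{g(t)}(x,y)\ge d_{g_0}(x,y)-\beta(K_1)\sqrt t$. This last bound is the only one whose constant involves $K_1$; it is exactly what controls how far, in the $g_0$-metric, the minimizing $\mathcal L$-geodesics appearing below may leave a $g_0$-ball, and hence it is what forces $a$ and $\epsilon_0$ to depend on $K_1$. \emph{Step 3 (non-collapsing with a $K_1$-free constant).} For $(x_0,t_0)$ with $B_0(x_0,\sqrt{t_0})$ relatively compact in $U$, I would bound Perelman's reduced length based at $(x_0,t_0)$ from above by testing with the reparametrized $g_0$-geodesic to $x_0$ and using $\mathrm{Sc}(g(s))\ge-3$ and $g(s)\le 2g_0$; this gives $\ell(q,\tau)\le d_{g_0}(x_0,q)^2/(2\tau)+1$, hence by monotonicity of the reduced volume $\widetilde V(\tau)\ge\widetilde V(t_0)\ge c(v_0)>0$ for all $\tau\le t_0$. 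The usual dichotomy then yields $\mathrm{vol}_{g(t_0)}B_{t_0}(x_0,r)\ge\kappa(v_0)\,r^3$ for every $r\le\sqrt{t_0}$ on which $|\mathrm{Rm}|\le r^{-2}$, with $\kappa(v_0)$ independent of $K_1$.

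\emph{Step 4 (the curvature bound).} I would prove $|\mathrm{Rm}(g(t))|\le K/t$ on $(U)_{a\sqrt t}$, $K=K(v_0)$, by contradiction. If no such $K$ existed there would be flows $g_j$ with parameters $K_1^{(j)}$ and points $x_j\in(U_j)_{a_j\sqrt{t_j}}$ with $|\mathrm{Rm}_{g_j}(x_j,t_j)|>j/t_j$. A Perelman point-selection carried out inside the interior region produces $(\bar x_j,\bar t_j)$ with $Q_j:=|\mathrm{Rm}_{g_j}(\bar x_j,\bar t_j)|$, with $Q_j\bar t_j\to\infty$, with $|\mathrm{Rm}|\le 4Q_j$ on a backward parabolic neighbourhood of rescaled size $\to\infty$, and with $B_0(\bar x_j,\sqrt{\bar t_j})$ still relatively compact in $U_j$. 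Rescaling $g_j$ parabolically by $Q_j$ about $(\bar x_j,\bar t_j)$ and extracting a limit --- the non-collapsing required for Hamilton's compactness theorem being supplied by Step 3, whose reduced-volume lower bound $c(v_0)$ survives for \emph{every} parameter since $Q_j\bar t_j\to\infty$ --- one obtains a complete, non-flat (because $|\mathrm{Rm}|=1$ at the basepoint at the final time), ancient three-dimensional Ricci flow $(M_\infty,g_\infty)$, with $\mathrm{Rc}_{g_\infty}\ge0$ (Step 1 rescaled, since the bound $-c_1$ becomes $-c_1/Q_j\to0$) and hence $\mathrm{Rm}_{g_\infty}\ge0$ by Hamilton--Ivey, and $\kappa(v_0)$-noncollapsed at all scales, i.e.\ a $\kappa$-solution. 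But rescaling the initial volume lower bound $\mathrm{vol}_{g_{0,j}}B_0(\bar x_j,r)\ge v_0 r^3$ ($r\le1$) makes it persist in the limit at unboundedly large radii, and rescaling the reduced-volume lower bound of Step 3 makes it persist for all parameters; by the three-dimensional structure theory this is impossible for a non-flat limit --- a \emph{compact} limit would force $\mathrm{vol}_{g_{0,j}}B_0(\bar x_j,1)\to0$, contradicting the hypothesis, while the asymptotic shrinking soliton of a \emph{noncompact} three-dimensional $\kappa$-solution is cylindrical and has vanishing reduced volume, contradicting the persistent lower bound. This gives $|\mathrm{Rm}(g(t))|\le K/t$ on $(U)_{a\sqrt t}$; feeding it back into Step 3 at scale $r=\sqrt{t/K}$, together with Bishop--Gromov at smaller scales (using $\mathrm{Rc}(g(t))\ge-1/t$), yields $\mathrm{vol}_{g(t)}B_t(x,r)\ge v\,r^3$ for all $0<r\le\sqrt t$ with $v=v(v_0)$.

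The crux is Step 4: one must run the point-selection and the $\mathcal L$-geodesic estimates of Step 3 on a flow defined only on a space-time domain with non-complete time slices --- and it is precisely the shrinking to $(U)_{a\sqrt t}$, with $a$ and $\epsilon_0$ allowed to depend on $K_1$, that keeps all the relevant curves inside $U_j$ --- and then invoke the full three-dimensional theory of ancient solutions (Hamilton--Ivey pinching, Perelman's classification, and the vanishing of the asymptotic volume ratio of noncompact $\kappa$-solutions) to close the argument. Steps 1--3 are local versions of now-standard arguments, but the localization in Step 1 near $t=0$, where the bound $K_1/t$ blows up, and the bookkeeping that keeps every surviving constant independent of $K_1$, also demand care.
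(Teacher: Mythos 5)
There is a genuine gap, and it sits at the base of your chain: the claim in Step 1 that $\ricci g(t)\ge -c_1$ with $c_1$ \emph{universal} (independent of $K_1$ and of $t$). Under the hypotheses $\ricci g(0)\ge -1$ and $\normes{\riem g(t)}\le K_1/t$, the only available mechanism (the localized Hamilton--Ivey/Zhang pinching, which is what the paper uses in corollary~\ref{ricci_local}) yields $\ricci g(t)\ge -\alpha/t$ for $t\le \epsilon(\alpha,K_1)^2$, i.e.\ a bound of order $K_1/(t\log(K_1/t))$ at best --- nothing close to a constant; no maximum-principle-with-cutoff argument produces a $t$-independent, $K_1$-free constant here, and the paper never claims more than the $-1/t$ bound in its conclusion. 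This is not a cosmetic point, because Steps 2 and 3 rest on it: the two-sided comparison $g(t)\le e^{2c_1t}g_0\le 2g_0$, and with it the reduced-length upper bound $\ell(q,\tau)\le d_{g_0}(x_0,q)^2/(2\tau)+1$ and the lower bound on Perelman's reduced volume, all require an upper bound of $g(t)$ by $g_0$, hence a Ricci lower bound that is integrable in time. With only $\ricci\ge -1/t$ one gets $g(t)\le (t/s)^2 g(s)$, which gives no comparison with $g_0$ at all, so your Step 3 non-collapsing (the one ingredient whose constant must be $K_1$-free) collapses; and with it the compactness and the ``persistence of the initial volume bound'' invoked in Step 4. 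A secondary, unaddressed issue is that reduced-volume monotonicity and the $\kappa$-solution structure theory are being run on flows with non-complete time slices, where $\mathcal{L}$-geodesics may exit the domain; you flag this but offer no localization.

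The paper's route avoids exactly this obstruction. It never bounds $g(t)$ from above by $g_0$: the non-collapsing at scale $\Lambda\sqrt{t}$ with a constant $v(v_0)$ free of $K_1$ (proposition~\ref{main_I}) is obtained by comparing $(B_0, g(0))$ and $(B_t, g(t))$ via the identity map, using only (a) Hamilton's one-sided distance bound $d_t\ge d_0-c\sqrt{K_1 t}$, (b) the volume-element bound $\dvol{g(t)}\le 2\,\dvol{g(0)}$ coming from the \emph{scalar} curvature local pinching (proposition~\ref{proposition_minoration_scalaire}), (c) the isoperimetric control inherited from $\conditionCthree$ at time $0$, and (d) the regularity scale of $g(t)$ supplied by the a priori $K_1/t$ and injectivity radius hypotheses; these are fed into the metric lemma~\ref{technical_lemma}/corollary~\ref{reformulation}, a Gromov--Hausdorff/Cheeger--Colding compactness argument, in place of any reduced-volume monotonicity. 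The Ricci lower bound $-1/t$ is then a separate output of corollary~\ref{ricci_local}, and only afterwards does a blow-up/point-picking argument (proposition~\ref{main_II}, close in spirit to your Step 4 but simpler: the limit has nonnegative curvature by Chen's theorem on ancient three-dimensional flows and positive asymptotic volume ratio directly from the non-collapsing at all scales $\le\sqrt{t}$, hence is flat by Perelman, contradicting the normalization) convert non-collapsing into the $K(v_0)/t$ curvature bound. To repair your proposal you would have to replace Steps 1--3 by an argument of this kind, since the uniform constant Ricci bound they hinge on is not available.
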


\bigskip

Let thus $K_0=K_0(v_0)$ given by proposition \ref{main_proposition}, and consider a partial flow $g(t)$ of parameter $K_0$ with initial data $(M,g_0)$. Applying proposition $\ref{main_proposition}$ to $U = \calD_t$ with $K_1 = 
C^2 K_0$, we find that the geometry of $g(t)$ is actually controlled at scale $\sqrt{\frac{t}{K_0}}$ on $(U)_{a \sqrt{t}}$ for any $0<t \epsilon^2$. By the maximality property of the construction mentioned above, this implies that 
the space-time region $(U)_{\Delta \rho} \times [t, t+ \Delta t]$ is contained in $\calD$, for $\Delta \rho = a' \sqrt{t}$, $\Delta t=\frac{c^2 t}{K_1}$ (where $a'=a+ C\sqrt{K_0}$). Fixing $\calK \subset M$ compact, $t < \epsilon$ 
and starting for some $t_0$ sufficiently small so that $K \times [0,t_0] \subset \calD$, we define sequences $t_k$, $\rho_k$ by $t_{k+1}=t_k + \Delta t_k$ with $\Delta t_k = \frac{1}{C^2 K_0} t_k$, $\rho_0=0$, $\rho_{k+1} = \rho_k 
+ \Delta \rho_k$ with $\Delta \rho_k = a' \sqrt{t_k}$. Then we have that for each $k$ such that $t_k \leq \epsilon^2$, $ (\calK)_{\rho_k} \times [0, t_k] \subset \calD$. Now $\frac{\Delta \rho_k}{\Delta t_k} = 
\frac{a' C^2 K_0}{\sqrt {t_k} }$ which integrates into $\rho_k \leq 2 a' C^2 K_0 \sqrt{t_k}$. Starting from $t_0 = (1+\frac{1}{C^2 K_1})^{-k_0}$ for $k_0$ large enough and considering the sequences up to index $k_0$, we find 
$(\calK)_{A \sqrt{t}} \times [0,t] \subset \calD$ for $A=2 a' C^2 K_0$. Since this is valid for any compact $\calK$, $t \leq \epsilon^2$, we get $\calM_{A,\epsilon^2} \subset \calD$.

\begin{center}
 \includegraphics{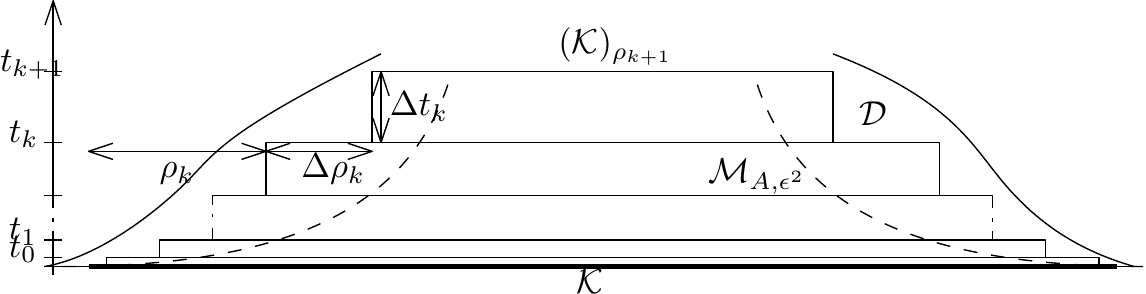}
\end{center}

Let us now attempt to sketch the ideas underlying the proof of proposition $\ref{main_proposition}$. Roughly speaking, this proposition means that when a region has Ricci curvature bounded from below and is non-collapsed at 
initial time, it remains so, and does not develop high curvature regions (i.e. singularities) for a definite time interval.

\bigskip

Singularities in three dimensional flow are known to occur in the form of ``neckpinches'' or ``caps'', in particular, balls centered around the point where the singularity forms become arbitrarily collapsed when curvature 
blows up. This is a consequence of the study by Perelman of ancient, positively curved Ricci flows, (namely of the fact that such a flow is either the trivial solution given by the Euclidian metric on $\setR^3$ or has zero 
Asymptotic Volume Ratio\footnote{\label{page_AVR}Recall that the Asymptotic Volume Ratio, or AVR, of a $n$-manifold $(M,x_0)$ of non-negative Ricci curvature is defined as the limit when $R \rightarrow + \infty$ of the ratio of the volume of the ball
$B(x_0,R)$ in $M$ over that of the ball of same radius in Euclidian space (this ratio is monotonous decreasing w.r.t $R$). It is independent of the choice of $x_0 \in M$.} - see \cite{Perelman_2002}, 11 and \cite{Kleiner_Lott}, 
I.11) when applied to a blown-up limit flow of a singularity. 

The contrapositive of this principle (formulated precisely as proposition $\ref{main_II}$) implies that there exists a function $K(v)>0$ such that if on a region $\calU$, balls of radius less than $\sqrt{t}$ are $v$-non-collapsed 
at time $t$, then 
\begin{equation} \label{label_intro_3}
 \normes{ \riem g(t) } \leq \frac{K(v)}{t}
\end{equation}
on a slightly smaller region. In other words, non-collapsing at positive times implies a bound on the Riemann tensor.

\bigskip

Consider now a flow $g(t)$ defined on a time interval $[0,\bt]$ on a region $\calU$ of a three manifold for which we have at initial time
\begin{equation} \label{noncollapsing_hypothesis}
 \begin{array}{cc} \ricci g(0) \geq -1, & \vol{g(0)} B_0(x,1) \geq v_0 \end{array} 
\end{equation}
and suppose the estimate $\normes{\riem g(x,t)} \leq \frac{K}{t}$ holds up to time $\bt$. Classically, the contraction of distances under the flow is controlled by
\[ d_t \geq d_0 - \frac{40}{3} \sqrt{K t} \]
on a slightly smaller region $\calV$. Also classical is the ``local pinching'' of the scalar curvature (proposition $\ref{proposition_minoration_scalaire}$), which guaranties that it remains bounded by 
(say) $-2$ for a definite amount of time on $\calV$. Since scalar curvature controls the evolution of the volume element under the flow, this implies in particular
\[ \dvol{g(t)} \leq 2 \dvol{g(0)} \]
on $\calV$ for $0 \leq t  \leq \bt$, if $\bt$ is small enough.

\bigskip

Consider the following elementary fact concerning maps that do not contract distances and have bounded volume dilatation

\begin{fact} \label{label_intro_2}
 There exists a function $v(n,I)>0$ such that the following holds. Let $(N_0,h_0,x_0)$ be an $n$-dimensional Riemannian manifold such that $B(x_0,1)$ is relatively compact in $N_0$, and such that an isoperimetric 
 inequality
 \[ \vol{n} U \leq I \left( \vol{n-1} \del U \right)^{\frac{n}{n-1}} \]
 holds for any $U \subset B(x_0,1)$. If there exists a smooth embedding $f: B(x_0,1) \rightarrow N_1$ of $B(x_0,1) \subset N_0$ into an $n$-dimensional Riemannian manifold $(N_1,h_1)$ such that
 \[ \begin{array}{cc} d_{h_1}(f(x),f(x')) \geq d_{h_0}(x,x'), & \vol{h_1} f(U) \leq 2 \vol{h_0} U \end{array}, \]
 then the manifold $N_1$ is non-collapsed near the image of $f$, that is, $\vol{h_1} B(f(x_0),1) \geq v$.
\end{fact}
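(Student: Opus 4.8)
The plan is to reduce Fact~\ref{label_intro_2} to a purely metric comparison between the ball $B_{h_0}(x_0,1)$ and its image $f(B_{h_0}(x_0,1))$, using the isoperimetric inequality on the source to transfer non-collapsing to the target. First I would fix the constant: let $v(n,I)$ be the volume that will come out of the argument below, to be determined. The key observation is that since $f$ is distance non-decreasing, the image $f(B_{h_0}(x_0,r))$ is contained in $B_{h_1}(f(x_0),r)$ for every $r\le 1$; conversely, since $f$ is a smooth embedding, $\vol{h_1} f(U)\le 2\,\vol{h_0} U$ gives a two-sided control of volumes of image sets. So it suffices to produce a definite lower bound on $\vol{h_0} B_{h_0}(x_0,1)$ from the isoperimetric inequality alone, and then observe that the hypotheses let us push a fixed fraction of that volume into $B_{h_1}(f(x_0),1)$.

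The heart of the matter is the elementary fact that a Riemannian manifold on which an isoperimetric inequality $\vol{n}U\le I(\vol{n-1}\del U)^{n/(n-1)}$ holds for all $U\subset B(x_0,1)$ (with $B(x_0,1)$ relatively compact) cannot have arbitrarily small volume near $x_0$: running the isoperimetric inequality on the sublevel sets $U_s=B(x_0,s)$ of the distance function and using the coarea formula $\frac{d}{ds}\vol{n}U_s = \vol{n-1}\del U_s$ (valid for a.e.\ $s$), one gets the differential inequality $V(s)\le I\,V'(s)^{n/(n-1)}$ for $V(s)=\vol{n}B(x_0,s)$, hence $\frac{d}{ds}\big(V(s)^{1/n}\big)\ge c(n,I)>0$, which integrates to $\vol{n}B(x_0,1)\ge c(n,I)^n=:2v$. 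This is the standard proof that a uniform isoperimetric constant forces a uniform volume lower bound at a fixed scale; I would quote it more or less verbatim.

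With $\vol{h_0}B_{h_0}(x_0,1)\ge 2v$ in hand, the conclusion is immediate: take $U=B_{h_0}(x_0,1)$ in the volume dilatation hypothesis to get
\[
\vol{h_1} f\!\left(B_{h_0}(x_0,1)\right)\le 2\,\vol{h_0}B_{h_0}(x_0,1).
\]
Wait --- that is the wrong direction; instead one uses that $f$ is an embedding and distance non-decreasing, so $f(B_{h_0}(x_0,1))\subset B_{h_1}(f(x_0),1)$, and the volume dilatation bound is not even needed for the lower bound. Let me restate: since $f$ is a smooth embedding it is in particular injective with $\vol{h_1}f(U)$ the ordinary volume of the image submanifold-with-the-same-dimension (so $f(U)$ is an open subset of $N_1$), and because $f$ does not contract distances, $\operatorname{diam}_{h_1}f(U)\ge\operatorname{diam}_{h_0}U$ while $f(U)\subset B_{h_1}(f(x_0),1)$. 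The genuinely needed inequality runs the \emph{other} way than I first wrote: $f$ non-contracting gives $\vol{h_1}f(U)\ge\vol{h_0}U$ is \emph{false} in general for embeddings of positive codimension, but here $N_0$ and $N_1$ have the same dimension $n$ and $f$ is an open embedding, so the Jacobian of $f$ is bounded below by $1$ pointwise (distances not contracted $\Rightarrow$ $df$ expands every vector $\Rightarrow$ $\det df\ge 1$), hence $\vol{h_1}f(U)\ge\vol{h_0}U$. Therefore
\[
\vol{h_1}B_{h_1}(f(x_0),1)\ \ge\ \vol{h_1}f\!\left(B_{h_0}(x_0,1)\right)\ \ge\ \vol{h_0}B_{h_0}(x_0,1)\ \ge\ 2v\ >\ v,
\]
which is the assertion. (The volume dilatation hypothesis $\vol{h_1}f(U)\le 2\vol{h_0}U$ is what will be used elsewhere, in the application to the flow, not in this Fact.)

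The main obstacle, such as it is, is getting the coarea/ODE step clean: one must be slightly careful that the distance function $s\mapsto\vol{n}B_{h_0}(x_0,s)$ is absolutely continuous with a.e.\ derivative equal to the perimeter (true for the distance function on a smooth manifold, by Sard applied to its regular values together with the coarea formula), and that the boundary $\del B_{h_0}(x_0,s)$ to which the isoperimetric inequality is applied is the one appearing in the coarea formula. Once that is granted, the rest is a one-line integration, and the constant $v=v(n,I)$ depends only on $n$ and $I$ as claimed since $c(n,I)$ above does.
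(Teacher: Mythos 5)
Your argument breaks at the inclusion $f(B_{h_0}(x_0,1)) \subset B_{h_1}(f(x_0),1)$, which you justify by ``$f$ is distance non-decreasing''. That is backwards: the hypothesis $d_{h_1}(f(x),f(x')) \geq d_{h_0}(x,x')$ pushes image points \emph{apart}, so it gives no upper bound on $d_{h_1}(f(x_0),f(x))$; what it actually gives is the reverse inclusion on preimages, $f^{-1}\bigl(B_{h_1}(f(x_0),r)\bigr) \subset B_{h_0}(x_0,r)$. Consequently the first inequality of your final chain, $\vol{h_1} B(f(x_0),1) \geq \vol{h_1} f\bigl(B_{h_0}(x_0,1)\bigr)$, is unjustified and in general false: take $f$ to be a dilation by a large factor into Euclidean space, where $\vol{h_1} f(B_{h_0}(x_0,1))$ is huge while $\vol{h_1} B(f(x_0),1)=\omega_n$. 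Your two correct observations (the isoperimetric inequality forces $\vol{h_0} B_{h_0}(x_0,1) \geq c(n,I)$, and a non-contracting equidimensional embedding has $\det df \geq 1$, hence $\vol{h_1} f(U) \geq \vol{h_0} U$) only lower-bound the volume of the \emph{image}, which may lie almost entirely outside the unit ball around $f(x_0)$; they say nothing about the target ball itself.

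Your parenthetical claim that the dilatation hypothesis $\vol{h_1} f(U) \leq 2 \vol{h_0} U$ is not needed in this Fact is exactly where the gap hides: that hypothesis is the only available way to dominate volumes of target balls by source volumes, and it is what the paper uses. The paper's argument (carried out in quantitative form in the proof of lemma \ref{technical_lemma}, steps (iii), (vi), (vii)) runs your isoperimetric ODE in the \emph{target}, not the source: one first shows that $B_{h_1}(f(x_0),r)$ lies in the image of $f$ for $r \leq \tfrac{1}{2}$ (a boundary/connectedness argument using non-contraction); then, writing $\tilde{B}_r = f^{-1}\bigl(B_{h_1}(f(x_0),r)\bigr) \subset B_{h_0}(x_0,r)$ and $F(r)=\vol{h_1} B_{h_1}(f(x_0),r)$, one has $F(r) \leq 2 \vol{h_0} \tilde{B}_r \leq 2I \left( \vol{n-1} \del \tilde{B}_r \right)^{\frac{n}{n-1}} \leq 2I \left( \calH^{n-1} S(f(x_0),r) \right)^{\frac{n}{n-1}} \leq 2I \left( F'(r) \right)^{\frac{n}{n-1}}$, where the first inequality is the dilatation bound, the second is the isoperimetric hypothesis applied to $\tilde{B}_r \subset B_{h_0}(x_0,1)$, the third uses that $f^{-1}$ is $1$-Lipschitz on the image (hence does not increase $\calH^{n-1}$), and the last is the coarea step you already discussed; integrating gives $\vol{h_1} B(f(x_0),\tfrac{1}{2}) \geq v(n,I)$. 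Without the dilatation bound the first step has no substitute, and your ODE in the source controls only $B_{h_0}(x_0,1)$, which is the wrong ball.
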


Now fix some time $0 < t \leq \bt$ and suppose $x$ is such that both $B_0=B_0(x,\Lambda \sqrt{t})$ and $B_1=B_t(x,\Lambda \sqrt{t})$ are contained in $\calV$, for some large $\Lambda$. Then the identity 
map between $B_0$ with metric $h_0 = \frac{1}{\Lambda^2 t} g(0)$ and $B_1$ with metric $h_1 = \frac{1}{\Lambda^2 t} g_1$ satisfies $d_{h_1} \geq d_{h_0} - \frac{40}{3} \frac{\sqrt{K}}{\Lambda}$ and 
$\dvol{h_1} \leq 2 \dvol{h_0}$. Moreover, from $(\ref{noncollapsing_hypothesis})$ an isoperimetric inequality with constant $I(v_0)$ holds on $B_0$ if $\Lambda \sqrt{t} \leq 1$. Thus asymptotically when $\Lambda$ is large, we find 
ourselves almost in the situation of the fact stated above, which should allow us to deduce a lower bound $v(v_0)$ on the volume of $B_1$. With the technical lemma $\ref{technical_lemma}$, we make rigorous sense of this argument, 
and show that under appropriate additional assumptions, there exists a constant $\Lambda(n, K, v_0)$ such that if $t \leq \frac{1}{\Lambda^2}$, we have 
\[ \vol{g(t)} B(x, \Lambda \sqrt{t}) \geq v \ (\Lambda \sqrt{t})^3, \]
i.e. balls at scale $\Lambda \sqrt{t}$ remain non-collapsed for a definite amount of time.

\bigskip

Ricci flow on a complete three-manifold preserves non-negative Ricci curvature. Here we make use of a local pinching result for Ricci curvature in dimension three (corollary $\ref{ricci_local}$), which states that still under the 
hypothesis $\normes{ \riem g(x,t) } \leq \frac{K}{t}$ on a region $\calU \times [0,t]$, and supposing $\ricci g(0) \geq -1$ on $\calU$, for any $\alpha>0$ one can show that
\begin{equation} \label{intro_label_1}
 \ricci g(t) \geq -\frac{\alpha}{t}
\end{equation}
on a smaller region $\calV$, for $0 < t \leq \epsilon^2$, where $\epsilon=\epsilon(K, \alpha)$.

\bigskip

Consider a flow $g(t)$ on a region $\calU$ whose initial data satisfies hypothesis $(\ref{noncollapsing_hypothesis})$, fix $K$ and let $(\bx,\bt)$ be a point where the $\frac{K}{t}$ curvature bound is first violated - for 
simplicity say we have $\normes{ \riem g(x,t) } \leq \frac{K}{t}$ on $\calU \times [0,\bt]$, while $\normes{ \riem g(\bx,\bt) } = \sup_{B_{\bt}(\bx, \sqrt{\bt})} \normes{ \riem g(\bt) } = \frac{K}{\bt}$ where the point $\bx$ 
belongs to a slightly smaller region $\calV \subset \calU$. By the discussion above, if $t \leq \frac{1}{\Lambda^2}$, we get $v(v_0)$-non-collapsing at a scale $\Lambda \sqrt{t}$, i.e.
\[ \vol{\bt} B_{\bt}(x, \Lambda \sqrt{\bt}) \geq v(v_0) (\Lambda \sqrt{\bt})^3, \]
for $x$ in a region $\calV \subset \calW \subset \calU$. Then applying $(\ref{intro_label_1})$ with $\alpha = \Lambda^{-2}$, we deduce (if $t \leq \min(\Lambda^{-2},\epsilon^2))$, simply by Bishop-Gromov volume comparison, 
non-collapsing at all scales below, in particular, say, 
\[ \vol{\bt} B_{\bt}(\bx, \sqrt{\bt}) \geq \frac{v(v_0)}{100} \sqrt{\bt}^3. \]
This in turn would imply $\riem{ g(\bx,\bt) } \leq \frac{K_0}{\bt}$, where $K_0=K(\frac{v}{100})$, and $K(.)$ is the function introduced at $(\ref{label_intro_3})$. Thus if we make the choice $K>K_0$ in the first place,  we get a 
contradiction. Finally the $\frac{K}{t}$ bound on the norm of the Riemann tensor cannot be violated for the small but definite timespan $t \leq \min(\Lambda^{-2}, \epsilon^2)$ on $\calV$.

\begin{center}
 \includegraphics{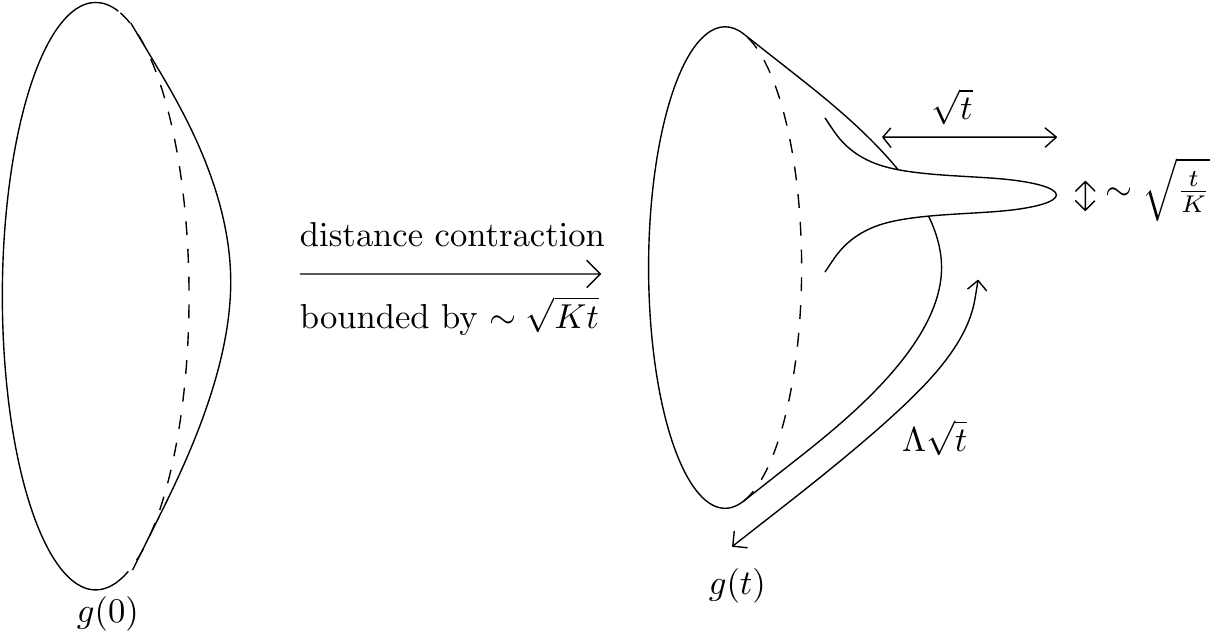}
\end{center}

The organization of the paper is as follows: sections 3 to 5 are dedicated to the proof of proposition $\ref{main_proposition}$. Section $\ref{section_partial_flow}$ is concerned with the actual construction of the partial flow,
while main theorem $\ref{main_theorem}$ is proven in section 7. Finally Appendices A and B collect known results on local minimum principles, and regularity scale control under the Ricci flow respectively.

\section{Evolution of geometric quantities}

In this section we recall well-known estimates on the evolution of distances and volumes under the Ricci flow. Throughout the paper, Ricci flows and Riemannian manifolds are not supposed complete unless 
explicitly stated.

\subsection{Distances and volumes}

The following results on the evolution of the distance function associated with a metric evolving by the Ricci flow are due to Hamilton. We simply state them under a form well-suited to the context of non-complete Ricci flows. 
Recall that for a function $f: \setR \rightarrow \setR$, one defines $\left. \dmdtm \right|_{t=t_0} f = \liminf_{t \rightarrow t_0} \frac{f(t)-f(t_0)}{t-t_0}$.

\begin{lemma} 
 Let $g(t)$ be a Ricci flow on $M^n \times [0,T]$. Let $x_0, x_1 \in M$ be such that, at time $t_0$, $d_{t_0}(x_0,x_1)$ is realized by some minimizing geodesic $\gamma \subset M$. If the curvature bound
 \[ \ricci g(x,t_0) \leq (n-1) C \]
 holds at time $t_0$ for every $x \in ( B_{t_0}(x_0,r_0) \cap \gamma ) \cup ( B_{t_0}(x_1,r_0) \cap \gamma )$, then
 \[ \left. \dmdtm \right|_{t=t_0} d_t(x_0,x_1) \geq -2 (n-1) \left( \frac{2}{3} C r_0 + \frac{1}{r_0} \right). \]
\end{lemma}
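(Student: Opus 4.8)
The plan is to run Hamilton's barrier argument for the distance function. The naive identity $\frac{d}{dt}L_{g(t)}(\gamma)\big|_{t=t_0}=-\int_\gamma\ricci(\gamma',\gamma')\,ds$ is not usable directly, since $\ricci$ is controlled only near the two endpoints of $\gamma$ and could be large in its interior; the crux is that, because $\gamma$ is \emph{minimizing} at time $t_0$, the index form bounds exactly that interior contribution.

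First I would set up the barrier. Write $\gamma:[0,L]\to M$, $L=d_{t_0}(x_0,x_1)$, for the given minimizing geodesic parametrized by $g(t_0)$-arclength, and set $\ell(t):=L_{g(t)}(\gamma)=\int_0^L|\gamma'(s)|_{g(t)}\,ds$ and $f(t):=d_t(x_0,x_1)$. Since $\gamma$ is a fixed compact curve in $M$ and $g$ is smooth on $M\times[0,T]$, the function $\ell$ is smooth near $t_0$; moreover $f\le\ell$ on $[0,T]$ because $\gamma$ joins $x_0$ to $x_1$, with equality at $t_0$. Hence $\ell$ is a smooth upper support function for $f$ at $t_0$, so $\left.\dmdtm\right|_{t=t_0}f\ge\ell'(t_0)$, and it remains to bound $\ell'(t_0)$ from below. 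Differentiating under the integral with $\partial_tg=-2\ricci$ and $|\gamma'|_{g(t_0)}\equiv1$ gives $\ell'(t_0)=-\int_0^L\ricci(\gamma',\gamma')\,ds$.

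Next comes the cutoff/index-form step. Assume first $L\ge2r_0$. Let $\{E_i\}_{i=1}^n$ be a $g(t_0)$-parallel orthonormal frame along $\gamma$ with $E_n=\gamma'$, and let $\phi:[0,L]\to[0,1]$ be the trapezoidal function equal to $s/r_0$ on $[0,r_0]$, to $1$ on $[r_0,L-r_0]$, and to $(L-s)/r_0$ on $[L-r_0,L]$. For each $i\le n-1$ the field $V_i=\phi E_i$ vanishes at the endpoints of $\gamma$, so nonnegativity of the index form of the minimizing geodesic $\gamma$ gives $\int_0^L\big(|\nabla_{\gamma'}V_i|^2-\langle R(V_i,\gamma')\gamma',V_i\rangle\big)\,ds\ge0$; summing over $i$ and using $\sum_i\langle R(E_i,\gamma')\gamma',E_i\rangle=\ricci(\gamma',\gamma')$ (the $i=n$ term vanishing) yields $\int_0^L\phi^2\,\ricci(\gamma',\gamma')\,ds\le(n-1)\int_0^L(\phi')^2\,ds=\tfrac{2(n-1)}{r_0}$. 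Now split $\ell'(t_0)=-\int_0^L\phi^2\ricci(\gamma',\gamma')\,ds-\int_0^L(1-\phi^2)\ricci(\gamma',\gamma')\,ds$: the first summand is $\ge-\tfrac{2(n-1)}{r_0}$ by the previous line, while $1-\phi^2$ is supported on $[0,r_0]\cup[L-r_0,L]$, that is on $(B_{t_0}(x_0,r_0)\cup B_{t_0}(x_1,r_0))\cap\gamma$ where $\ricci\le(n-1)C$, so the second summand is $\ge-(n-1)C\int_0^L(1-\phi^2)\,ds=-(n-1)C\cdot\tfrac{4}{3}r_0$. Adding the two yields exactly $\ell'(t_0)\ge-2(n-1)\big(\tfrac23Cr_0+\tfrac1{r_0}\big)$.

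Finally, the short case $L<2r_0$: then every point of $\gamma$ lies within $g(t_0)$-distance $r_0$ of $x_0$ or of $x_1$, so the hypothesis forces $\ricci\le(n-1)C$ all along $\gamma$, and I would rerun the same computation with the tent cutoff of reduced height $\lambda=\sqrt{L/(2r_0)}\le1$ and ramps of length $L/2$, for which one still has $\int_0^L(\phi')^2\,ds=\tfrac{2}{r_0}$ while $\int_0^L(1-\phi^2)\,ds=L\big(1-\tfrac13\lambda^2\big)\le\tfrac43r_0$ (the last step an elementary inequality in $u=L/r_0\in(0,2]$), giving the same conclusion. I expect the only steps needing genuine care to be the justification of the index-form inequality, which is precisely where the minimizing hypothesis enters and what prevents a possibly large interior Ricci curvature from spoiling the estimate, and the mild case split above, which is needed so that the constant $\tfrac23Cr_0+\tfrac1{r_0}$ comes out uniformly in $L$.
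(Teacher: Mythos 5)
Your proof is correct and is essentially the argument the paper itself relies on: the paper proves nothing here but cites Kleiner--Lott (I.8.3), whose proof is exactly your combination of the fixed time-$t_0$ minimizing geodesic as a smooth upper support function for $d_t$ with the second-variation/index-form estimate for a trapezoidal cutoff, the Ricci hypothesis being used only on the ramps near the endpoints; your reduced-height tent in the case $d_{t_0}(x_0,x_1)<2r_0$ is a clean way to get the stated constant uniformly in the length. The only caveat is that the support-function comparison controls the difference quotients of $d_t$ only as $t\to t_0^-$ (for $t>t_0$ it gives an upper, not a lower, bound), which is the intended barrier/backward reading of $\dmdtm$ here and is all that is needed for the corollaries that follow.
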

(for a proof of this fact, see for example \cite{Kleiner_Lott}, I.8.3)

Note that if $U \subset M$ is a relatively compact open set, then for $x \in U$, the distance $d_t(x, M \setminus U)$ is always realized by a minimizing geodesic of $M$, which lies entirely in $U$, except for one of its endpoints. 
Thus the above implies the following distance contraction control.

\begin{corollary} \label{lemme_distorsion_distances}
 Let $(M,g(t))$ be a Ricci flow on $[0,T]$ and $U \subset M$ a relatively compact open region such that
 \[ \normes{ \riem g(x,t) } \leq \frac{K}{t} \]
 for all $x \in U$, $0 < t \leq T$. Then for every $x \in U$, $ 0 \leq t \leq t' \leq T$ one has
 \[ d_{t'}(x,M \setminus U) \geq d_{t}(x,M \setminus U) - \frac{20}{3} (n-1) \sqrt{K} \left( \sqrt{t'}-\sqrt{t} \right). \]
\end{corollary}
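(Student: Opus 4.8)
The plan is to reduce the statement to the preceding lemma by applying it to the pair of points $x$ and a suitable point on the boundary, and then to integrate the resulting differential inequality in time. First I would fix $x \in U$ and, for each time $s \in [t,t']$, consider the distance $\rho(s) := d_s(x, M \setminus U)$. Since $U$ is relatively compact, for each $s$ this distance is realized by a minimizing geodesic $\gamma_s$ of $M$ joining $x$ to a point $y_s \in \partial U = \overline{U} \setminus U$; as noted in the remark preceding the corollary, $\gamma_s$ lies entirely in $U$ except for its endpoint $y_s$. The function $\rho$ is continuous in $s$ (indeed locally Lipschitz, since the metrics $g(s)$ vary smoothly on the relatively compact set $\overline{U}$ away from $s=0$, and uniformly comparable on compact time subintervals of $(0,T]$), so it suffices to establish the differential inequality $\dmdtm \rho(s) \geq -\frac{20}{3}(n-1)\sqrt{K}\, s^{-1/2}$ for each $s \in (t,t']$ and then integrate.

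To get that differential inequality at a fixed time $s_0 \in (t,t']$, I would like to apply the lemma with $x_0 = x$, $x_1 = y_{s_0}$, at time $t_0 = s_0$, with the curvature bound $\ricci g(\cdot, s_0) \leq (n-1) C$ coming from $\normes{\riem g(\cdot, s_0)} \leq K/s_0$, so $C = K/(s_0(n-1))$ works on all of $U$ (and trivially $\gamma_{s_0}$ meets $M\setminus U$ only at $y_{s_0}$, where no bound is needed). The one subtlety is the choice of radius $r_0$ in the lemma: we need the curvature bound only on $(B_{s_0}(x,r_0)\cup B_{s_0}(y_{s_0},r_0))\cap\gamma_{s_0}$, and since the relevant portion of $\gamma_{s_0}$ near both endpoints lies in $U$, any $r_0$ works; the lemma then gives $\dmdtm\big|_{s_0} d_s(x,y_{s_0}) \geq -2(n-1)\left(\tfrac{2}{3}C r_0 + \tfrac{1}{r_0}\right)$. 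Optimizing the right-hand side over $r_0>0$ gives the bound $-2(n-1)\cdot 2\sqrt{\tfrac{2C}{3}} = -\tfrac{4}{\sqrt{3}}(n-1)\sqrt{2(n-1)C/(n-1)}$; substituting $C = K/(s_0(n-1))$ and simplifying the numerical constant yields a bound of the form $-c(n-1)\sqrt{K/s_0}$. Here one must be a little careful, because $d_s(x, M\setminus U) \leq d_s(x, y_{s_0})$ for all $s$ with equality at $s=s_0$, so $\dmdtm\big|_{s_0}\rho(s) \geq \dmdtm\big|_{s_0} d_s(x,y_{s_0})$, which is exactly what we want; this is the standard trick for differentiating an infimum from below.

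Finally I would integrate: since $\rho$ is continuous and satisfies $\dmdtm \rho(s) \geq -c(n-1)\sqrt{K}\, s^{-1/2}$ on $(t,t']$, a standard comparison argument (e.g.\ comparing $\rho$ with $\rho(t') + 2c(n-1)\sqrt{K}\,\sqrt{s}$, which is nonincreasing in the sense that its lower Dini derivative dominates that of $\rho$ backward in time) gives $\rho(t') - \rho(t) \geq -2c(n-1)\sqrt{K}(\sqrt{t'}-\sqrt{t})$, that is, $d_{t'}(x, M\setminus U) \geq d_t(x, M\setminus U) - 2c(n-1)\sqrt{K}(\sqrt{t'}-\sqrt{t})$. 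Matching constants, the optimal choice of $r_0$ produces $2c = \tfrac{20}{3}$ up to the precise constant claimed, which accounts for the $\tfrac{20}{3}$ in the statement. The main obstacle is not conceptual but bookkeeping: verifying the continuity/Lipschitz regularity of $s \mapsto d_s(x, M\setminus U)$ carefully enough to justify integrating the Dini derivative inequality (this uses relative compactness of $U$ crucially, so that the metrics stay mutually bounded away from $t=0$), and being careful that the $t=0$ endpoint is handled only in the limit since the curvature bound degenerates there — but the statement only asserts the inequality for the distances at times $t, t'$, and the right-hand side $\sqrt{t'}-\sqrt{t}$ remains finite as $t\to 0$, so this causes no real trouble.
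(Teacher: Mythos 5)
Your proposal takes exactly the route the paper intends (the paper only sketches it right after the lemma): apply the lemma to $x$ and a point $y_{s_0}\in M\setminus U$ realizing $d_{s_0}(x,M\setminus U)$, whose minimizing geodesic lies in $U$ except for one endpoint, get a lower bound of the form $-c(n-1)\sqrt{K/s_0}$ on the Dini derivative of $\rho(s)=d_s(x,M\setminus U)$, and integrate in $s$ to produce the $\sqrt{t'}-\sqrt{t}$ loss. Two small repairs are needed, neither of which affects the conclusion. First, $\normes{\riem g(\cdot,s_0)}\leq K/s_0$ gives $\ricci g(\cdot,s_0)\leq (n-1)K/s_0$, so the lemma applies with $C=K/s_0$, not $C=K/\bigl(s_0(n-1)\bigr)$; with the correct $C$, the choice $r_0=\sqrt{s_0/K}$ gives the rate $-2(n-1)\bigl(\tfrac{2}{3}+1\bigr)\sqrt{K/s_0}=-\tfrac{10}{3}(n-1)\sqrt{K/s_0}$, which integrates to exactly the constant $\tfrac{20}{3}$ (the optimal $r_0$ gives $8\sqrt{6}/3\leq\tfrac{20}{3}$, so either choice works). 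Second, the step ``$\rho(s)\leq d_s(x,y_{s_0})$ with equality at $s_0$ implies $\dmdtm\big|_{s_0}\rho\geq\dmdtm\big|_{s_0}d_s(x,y_{s_0})$'' is valid only for difference quotients with $s<s_0$ (dividing by $s-s_0<0$ flips the inequality); for $s>s_0$ it goes the other way, since at later times the infimum may be realized by a different boundary point, so the two-sided $\liminf$ of the paper's $\dmdtm$ is not inherited by the infimum in general. The standard fix, which is what your ``trick for differentiating an infimum'' amounts to, is to record only a lower bound on the left lower Dini derivative of the continuous function $\rho$ at each $s_0\in(t,t']$; that is sufficient for the integration step, since a continuous function one of whose Dini derivates is bounded below by an integrable function satisfies the corresponding integral inequality.
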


We finally retrieve the classical distance-between points comparison principle

\begin{corollary} \label{corollaire_distorsion_distances}
 Let $g(t)$ be a Ricci flow on $M^n \times [0,T]$, and let $0 \leq t \leq T$, $x_0 \in M$ and $a>0$ be such that $B_t(x_0, 3a)$ is relatively compact in $M$, and such that the curvature estimate
 \[ \normes{\riem g(x,t')} \leq \frac{K}{t'} \]
 holds for any $x \in B_t(x_0,3a)$ and $t < t' \leq T$ (where $K>1$). Then for any $x, x' \in B_t(x_0,a)$ and $t \leq t' \leq T$, one has
 \[ d_{t'}(x,x') \geq d_t(x,x') - \frac{20}{3} (n-1) \sqrt{K} \left( \sqrt{t'}-\sqrt{t} \right). \]
\end{corollary}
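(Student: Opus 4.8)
The plan is to derive this two-point distance-distortion estimate from Hamilton's distance lemma (the Lemma just before Corollary~\ref{lemme_distorsion_distances}), applied at every intermediate time in the same way one proves Corollary~\ref{lemme_distorsion_distances}; the one new difficulty is that a minimizing geodesic joining two \emph{interior} points need not stay in the region $B_t(x_0,3a)$ where curvature is controlled. Fix $x,x'\in B_t(x_0,a)$. Since $\overline{B_t(x_0,3a)}$ is compact and contains $x,x'$, we have $d_t(x,x')\le d_t(x_0,x)+d_t(x_0,x')<2a$, and $s\mapsto d_s(x,x')$ is continuous on $[t,T]$. Set
\[
 \psi(s)\ :=\ d_s(x,x')\ +\ \tfrac{20}{3}\,(n-1)\sqrt{K}\,\sqrt{s},\qquad s\in[t,t'],
\]
so that the assertion is exactly $\psi(t')\ge\psi(t)$.

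The heart of the matter is a dichotomy at each fixed $s\in(t,t']$: either (a) some minimizing $g(s)$-geodesic $\gamma$ from $x$ to $x'$ lies inside $B_t(x_0,3a)$, or (b) $d_s(x,x')\ge d_s\big(x,\,M\setminus B_t(x_0,3a)\big)+d_s\big(x',\,M\setminus B_t(x_0,3a)\big)$; indeed, if (b) fails then a length-minimizing sequence of paths from $x$ to $x'$ is eventually confined to the relatively compact $B_t(x_0,3a)$ (a path meeting its complement being too long) and, by Arzel\`{a}--Ascoli, subconverges to a minimizing geodesic lying in it. In case (a), the arcs $B_s(x,r_0)\cap\gamma$ and $B_s(x',r_0)\cap\gamma$ lie in $B_t(x_0,3a)$, where $\normes{\riem g(y,s)}\le K/s$ and hence $\ricci g(y,s)\le(n-1)K/s$ for $y\in B_t(x_0,3a)$; I would apply Hamilton's lemma with $r_0=\sqrt{s/K}$ (note $r_0<\sqrt s$ because $K>1$), for which $\tfrac23\tfrac{K}{s}\,r_0+\tfrac1{r_0}=\tfrac53\sqrt{K/s}$, obtaining that the lower Dini derivative at $s$ of $\sigma\mapsto d_\sigma(x,x')$ is at least $-\tfrac{10}{3}(n-1)\sqrt{K/s}=-\tfrac{d}{ds}\big(\tfrac{20}{3}(n-1)\sqrt K\,\sqrt s\big)$, i.e.\ the lower Dini derivative of $\psi$ at $s$ is $\ge0$. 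In case (b), set $B:=\tfrac{20}{3}(n-1)\sqrt K\,(\sqrt s-\sqrt t)$. If $B>2a$ then, using $d_s(x,x')\ge0$ and $d_t(x,x')<2a$, $\psi(s)-\psi(t)=d_s(x,x')-d_t(x,x')+B>B-2a>0$. If $B\le2a$ then Corollary~\ref{lemme_distorsion_distances} with $U=B_t(x_0,3a)$, together with the elementary bound $d_t(y,M\setminus B_t(x_0,3a))\ge2a$ for $y\in B_t(x_0,a)$, gives $d_s(x,M\setminus B_t(x_0,3a))\ge2a-B\ge0$ and likewise for $x'$, so $d_s(x,x')\ge4a-2B$ and $\psi(s)-\psi(t)\ge(4a-2B)-2a+B=2a-B\ge0$. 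Hence in case (b), $\psi(s)\ge\psi(t)$; this is where the enlarged radius $3a$ in the hypothesis is used---it supplies exactly the needed slack.

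To conclude, I would run a one-sided monotonicity argument. Suppose $\psi(s_1)<\psi(t)$ for some $s_1\in(t,t']$, and let $s^*:=\sup\{s\in[t,s_1]:\psi(s)\ge\psi(t)\}$. By continuity $\psi(s^*)\ge\psi(t)$ and $s^*<s_1$, whereas $\psi<\psi(t)$ on $(s^*,s_1]$; so on $(s^*,s_1]$ case (b) cannot occur (it would force $\psi\ge\psi(t)$ there), hence case (a) holds throughout and the lower Dini derivative of $\psi$ is $\ge0$ on $(s^*,s_1]$. By the classical fact that a continuous function with everywhere non-negative lower Dini derivative is non-decreasing, $\psi$ is non-decreasing on $[s^*,s_1]$, contradicting $\psi(s_1)<\psi(t)\le\psi(s^*)$. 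Therefore $\psi\ge\psi(t)$ on $[t,t']$; taking $s=t'$ rearranges to the claimed estimate.

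The step I expect to be the main obstacle is exactly the geodesic-containment issue handled by the dichotomy: one must show that a (near-)minimizing $g(s)$-geodesic between $x$ and $x'$ either stays inside $B_t(x_0,3a)$, so that Hamilton's lemma applies with full curvature control, or else is long enough that the desired lower bound on $d_s(x,x')$ follows already from Corollary~\ref{lemme_distorsion_distances}. Everything else---the choice $r_0=\sqrt{s/K}$, the elementary optimization, and the passage from the pointwise Dini estimate to the integrated bound---is routine, just as for Corollary~\ref{lemme_distorsion_distances}.
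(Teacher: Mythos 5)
Your argument is essentially correct, but it takes a genuinely different and much heavier route than the paper. The paper's proof is a one-line reduction to Corollary \ref{lemme_distorsion_distances}: fix $t$ and apply that corollary to $U=B_t(x,d_t(x,x'))\subset B_t(x_0,3a)$, which is relatively compact. Every point of $M\setminus U$ is at $g(t)$-distance at least $d_t(x,x')$ from $x$, so $d_t(x,M\setminus U)\geq d_t(x,x')$, while $x'\in M\setminus U$ gives $d_{t'}(x,x')\geq d_{t'}(x,M\setminus U)$; the corollary then yields the estimate immediately. This choice of $U$ sidesteps exactly the two difficulties your proof has to fight: whether a minimizing geodesic between $x$ and $x'$ stays in the region where curvature is controlled (for the distance to the complement of a relatively compact $U$, the remark preceding Corollary \ref{lemme_distorsion_distances} guarantees a realizing geodesic inside $U$), and the temporal regularity of the distance function (distance to $M\setminus U$ is computed by paths confined to the compact $\overline{U}$, where the metrics $g(s)$, $t\leq s\leq t'$, are uniformly comparable). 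Your dichotomy, the Hamilton-lemma computation with $r_0=\sqrt{s/K}$, the case (b) bookkeeping exploiting the slack between $a$ and $3a$, and the monotonicity scheme via $s^*$ are all sound.

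The one step you have not justified is the opening assertion that $s\mapsto d_s(x,x')$ is continuous on $[t,T]$; the reason you give (compactness of $\overline{B_t(x_0,3a)}$) is not sufficient, because near-minimizing paths between $x$ and $x'$ at later times need not stay in that ball, and outside it you have no control on $g(s)$ whatsoever. For a smooth but possibly incomplete flow, the two-point distance is in general only upper semicontinuous in time (it is an infimum of the continuous functions $s\mapsto L_s(\gamma)$ over fixed paths $\gamma$), and a sudden drop caused by shortcuts through the uncontrolled exterior region is precisely what the classical monotonicity fact you invoke cannot tolerate. Fortunately the gap is repairable with material you already have: upper semicontinuity alone gives $\psi(s^*)\geq\psi(t)$, since a superlevel set of an upper semicontinuous function is closed; and on $(s^*,s_1]$ case (a) holds at every time, so a minimizing geodesic lies in the compact set $\overline{B_t(x_0,3a)}$, on which the metrics $g(\sigma)$, $\sigma\in[t,t']$, are uniformly comparable --- comparing the lengths of these geodesics at nearby times yields two-sided bounds and hence continuity of $\psi$ on $[s^*,s_1]$, which is all your final step needs. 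With that repair your proof is complete, though considerably longer than the paper's.
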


\begin{proof} Simply fix $0 \leq t \leq T$ and apply corollary $\ref{lemme_distorsion_distances}$ to $U=B_t(x,d_t(x,x') \subset B_t(x_0,3a)$. \end{proof}

Recall that the volume elements of a metric $g(t)$ evolving under the flow obeys the equation
\[ \ddt \dd \vol{g(t)} = - \scal g(x,t) \dd \vol{g(t)} \]
In particular, if $\scal g(x,t)$ is uniformly bounded on a region $U \in M$, i.e.
\[ \scal g(x,t) \geq - \epsilon^2 \]
for every $x \in U$, $0 \leq t' \leq t$, then
\[ \vol{g(t)} U \leq e^{ \epsilon^2 t} \vol{g(0)} U. \]

The classical pinching behavior $\scal g(x,t) \geq \frac{1}{\inf_x \scal g(x,0) - \frac{2}{n} t}$ of the scalar curvature for complete flows admits a local version, due to B. L. Chen \cite{Chen_2009}. In particular, a lower bound 
on scalar curvature at initial time in a region implies a lower bound at further time on an interior region.

\begin{proposition} \label{proposition_minoration_scalaire}
 Let $g(t)$ be a Ricci flow on $(M,x_0) \times [0,T]$ such that the following holds:
 \begin{align*}
  & B_t(x_0, a+A) \text{ is relatively compact in } M, \\
  & \normes{ \riem g(t) } \leq \frac{K}{t} \text{ for } x \in B_t(x_0,\sqrt{\frac{t}{K}}), \\
  & \scal g(0) \geq - \epsilon^2 \text{ on } B_0(x,a+A).
 \end{align*}
 Then 
 \[ \scal g(x,t) \geq -\max(\epsilon^2, \frac{100}{A^2}) \]
 for every $(x,t)$ with $0<t \leq T$, $d_t(x_0,x) \leq a - \frac{20}{3} (n-1) \sqrt{K t}$.
\end{proposition}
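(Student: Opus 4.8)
This is the local version of the scalar‑curvature pinching of B.-L.\ Chen~\cite{Chen_2009}, repackaged so that the conclusion is phrased in terms of the time‑$t$ distance from $x_0$. The plan is to proceed in two steps.

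\emph{Step 1 (reduction to an initial ball).} The hypothesis $\normes{\riem g(t)}\le K/t$ makes Hamilton's distance‑evolution lemma applicable at $x_0$ with radius $r_0=\sqrt{t/K}$, which, after integration in $t$, gives (exactly as in Corollary~\ref{lemme_distorsion_distances})
\[ d_t(x_0,x)\ \ge\ d_0(x_0,x)-\tfrac{20}{3}(n-1)\sqrt{Kt} \]
for all $x$ and $0<t\le T$. Consequently every pair $(x,t)$ occurring in the conclusion satisfies $d_0(x_0,x)\le a$, so $x\in B_0(x_0,a)$, and a $g(0)$‑collar of width $A$ still separates $x$ from $\partial B_0(x_0,a+A)$. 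It thus suffices to show: \emph{whenever $\scal g(0)\ge-\epsilon^2$ on $B_0(x_0,a+A)$, one has $\scal g(x,t)\ge -\max(\epsilon^2,100A^{-2})$ for all $x\in B_0(x_0,a)$, $0<t\le T$.}

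\emph{Step 2 (localised maximum principle).} Put $u=\scal g$, so that $\partial_t u=\Delta u+2\normes{\ricci g}^2\ge \Delta u+\tfrac{2}{n}u^2$. Since the flow is incomplete the maximum principle must be localised: I would introduce a cutoff $\eta=\varphi(d_t(x_0,\cdot))$ with $\varphi\equiv1$ on $[0,a]$, $\varphi\equiv0$ beyond $a+A$, and $\normes{\varphi'}^2\varphi^{-1}$, $\normes{\varphi''}\le C(n)A^{-2}$; using $\partial_t d_t\ge -C(n)\sqrt{K/t}$ and the Laplacian comparison $\Delta d_t\le (n-1)d_t^{-1}+C(n)\sqrt{K/t}$ near $x_0$ (both consequences of $\normes{\ricci g}\le(n-1)K/t$), one controls $(\partial_t-\Delta)\eta$ in the barrier sense. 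Running the maximum principle on $\eta\,(u+\mu)$ with $\mu=\max(\epsilon^2,100A^{-2})$, one exploits at a first interior zero of the spatial minimum that there $\eta>0$, $\nabla(\eta(u+\mu))=0$ and $\Delta(\eta(u+\mu))\ge0$, together with the good reaction term $\tfrac{2}{n}\eta u^2\ge0$ — which, via the identity $\bigl(-\tfrac{n}{2t}\bigr)'=\tfrac{2}{n}\bigl(\tfrac{n}{2t}\bigr)^2$, prevents the estimate from deteriorating in $t$ — and the bound on $(\partial_t-\Delta)\eta$, reaching a contradiction once the numerical constant $100$ has been chosen to dominate $C(n)$. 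Alternatively one simply invokes Chen's theorem verbatim.

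\emph{Main obstacle.} The technical heart is the handling in Step~2 of the \emph{time‑dependent, merely Lipschitz} cutoff $\eta=\varphi(d_t(x_0,\cdot))$: one must legitimise the differential inequalities across the cut locus (Calabi's trick: replace $d_t$ near a bad point by a smooth upper barrier along a minimizing geodesic) and must control $\partial_t d_t$ and $\Delta d_t$ simultaneously — the latter is precisely why $\normes{\riem g(t)}\le K/t$ enters the hypotheses. Step~1 and the remaining estimates are routine bookkeeping with the two scales $a$ and $A$.
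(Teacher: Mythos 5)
Your overall strategy --- localizing the minimum principle for $(\del_t-\Delta)\scal\, g \ \ge\ \tfrac{2}{n}(\scal g)^2$ by a cutoff composed with $d_t(x_0,\cdot)$, whose evolution is controlled through Perelman's estimate on $(\del_t-\Delta)d_t$ under the $K/t$ bound near $x_0$ --- is exactly the technique behind this statement (the paper itself does not reprove it, citing \cite{Chen_2009}, but Appendix A, i.e.\ Lemma \ref{lemme_cutoff} together with the displayed localized minimum principle, is this very argument). The gap is in your cutoff. With $\eta=\varphi(d_t(x_0,\cdot))$, $\varphi\equiv1$ on $[0,a]$ and a time-independent profile, one has $(\del_t-\Delta)\eta=\varphi'\,(\del_t-\Delta)d_t-\varphi''$, and the best available input, $(\del_t-\Delta)d_t\ge -c(n)\sqrt{K/t}$ in the barrier sense, only gives $(\del_t-\Delta)\eta+2\normes{\nabla\eta}^2/\eta\le \tfrac{c(n)}{A}\sqrt{K/t}+\tfrac{C}{A^2}$. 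The first term blows up as $t\to0$ and involves $K$; it cannot be dominated by ``choosing the numerical constant $100$'', so the maximum-principle computation does not close to $-\max(\epsilon^2,100/A^2)$. The paper's Lemma \ref{lemme_cutoff} repairs precisely this by placing the shift $\tfrac{10(n-1)}{3}\sqrt{Kt}$ inside the argument of $\phi$, so that $\phi'\bigl[(\del_t-\Delta)d_t+\tfrac{5(n-1)}{3}\sqrt{K/t}\bigr]\le0$ and the clean bound $\tfrac{100}{A^2}$ holds; the price is that the set where the cutoff equals $1$ shrinks like $a-\tfrac{10(n-1)}{3}\sqrt{Kt}$, which is exactly why the conclusion is stated on the shrinking region $d_t(x_0,x)\le a-\tfrac{20}{3}(n-1)\sqrt{Kt}$ and not on a fixed ball. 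Your alternative ``first interior zero'' device does not rescue the unshifted cutoff: the touching point can sit where $\eta=0$ (the negative minima may slide to the edge of the support), and if the device were valid it would yield persistence of the initial lower bound with no $1/A^2$ error at all, which is stronger than the true statement; the correct argument takes a space--time minimum of $\eta(u+\mu)$, where the term $(u+\mu)\bigl[(\del_t-\Delta)\eta+2\normes{\nabla\eta}^2/\eta\bigr]$ genuinely enters and must be bounded uniformly.

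Two smaller points. Your Step 1 is both unjustified and unnecessary: the two-endpoint contraction $d_t(x_0,x)\ge d_0(x_0,x)-\tfrac{20}{3}(n-1)\sqrt{Kt}$ needs a Ricci upper bound near both endpoints (Corollary \ref{lemme_distorsion_distances} assumes the $K/t$ bound on the whole region), whereas the proposition assumes it only on $B_t(x_0,\sqrt{t/K})$ --- a deliberately minimal hypothesis, since the cutoff argument needs curvature control only near the center; moreover the statement you ``reduce'' to (a bound on all of $B_0(x_0,a)\times(0,T]$) is stronger than the proposition (distance expansion is not controlled under these hypotheses) and is in any case not what a $d_t$-based cutoff delivers. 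Finally, ``invoking Chen verbatim'' is indeed what the paper does; but if the sketch is meant to stand as a proof, the time-shifted cutoff is the missing ingredient.
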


\subsection{Pinching of the Ricci tensor}

Recall that the non-negativity of the Ricci tensor is preserved by a complete Ricci flow in dimension three (see \cite{Chen_2013}). While one cannot expect the exact analogue of the local property 
$\ref{proposition_minoration_scalaire}$ to hold for the lowest eigenvalue of the Ricci tensor even in dimension three, the following Hamilton-Ivey inequality was proved by Z.H. Zhang \cite{Zhang_2015}, and is local in nature. (see 
Appendix \ref{appendix_local} for details)

\begin{proposition} \label{ricci_pinching}
 Let $g(t)$ be a Ricci flow on $(M^3,x_0) \times [0,T]$ such that the following holds
 \begin{align*}
  & B_t(x_0, a+ A) \text{ is relatively compact in } M, \\
  & \normes{ \riem g(t) } \leq \frac{K}{t} \text{ for } x \in B_t(x_0,\sqrt{\frac{t}{K}}), \\
  & \ricci g(0) \geq - g(0) \text{ on } B_0(x,a+ A).
 \end{align*}
 Then 
 \[ \frac{\scal}{-\lambda_1} - \ln (-\lambda_1)(1+t) + 3 \geq -\frac{200 (1+t)}{A^2} \]
 for every $(x,t)$ with $0<t \leq T$, $d_t(x_0,x) \leq a - 20 \sqrt{K t}$, and $\lambda_1(x,t)<0$ where $\lambda_1(x,t)$ denotes the lowest eigenvalue of the Ricci tensor at $(x,t)$.
\end{proposition}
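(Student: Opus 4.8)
\medskip
\noindent\textit{Sketch of proof (strategy).}
The plan is to obtain this statement as a localization, inside the relatively compact ball $B_t(x_0,a+A)$, of the classical Hamilton--Ivey pinching estimate; the error term $\frac{200(1+t)}{A^2}$ and the margin $20\sqrt{Kt}$ in the conclusion are precisely the residue of inserting into the maximum--principle argument a spatial cut-off of width $A$, something one can control only because of the hypothesis $\normes{\riem g(t)}\le\frac{K}{t}$. First I recall the pointwise input. Under the Ricci flow the curvature operator $\mathrm{Rm}$ satisfies a heat--type equation whose reaction term is $\mathrm{Rm}^2+\mathrm{Rm}^{\#}$, and in dimension three $\mathrm{Rm}$ is a linear function of $\ricci$, so this reaction is an autonomous ODE on the eigenvalues. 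Hamilton's and Ivey's computation shows that the time--dependent set $\calK_t$ of algebraic curvature operators for which $\frac{\scal}{-\lambda_1}-\ln(-\lambda_1(1+t))+3\ge 0$ when $\lambda_1<0$ is closed, fibrewise convex, invariant under parallel transport, contains the initial data (the hypothesis $\ricci g(0)\ge -g(0)$ being exactly what places the curvature operator at $t=0$ in $\calK_0$), and is preserved by the reaction ODE; on a complete bounded--curvature flow Hamilton's maximum principle for systems then gives $\mathrm{Rm}(\cdot,t)\in\calK_t$, i.e.\ the estimate with $0$ on the right.

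To localize, I would fix a universal constant $c$ (changing from line to line) and a smooth nonincreasing $\varphi:\setR\to[0,1]$ with $\varphi\equiv 1$ on $(-\infty,\,a-20\sqrt{Kt}]$ and $\varphi\equiv 0$ near $a+A$, satisfying $(\varphi')^2\le c\,\varphi/A^2$ and $|\varphi''|\le c/A^2$, and set $\phi(x,t)=\varphi(d_t(x_0,x))$; since $B_t(x_0,a+A)$ is relatively compact, $\phi(\cdot,t)$ is compactly supported. Under $\normes{\riem g(t)}\le\frac{K}{t}$ one has, in the barrier sense on $B_t(x_0,a+A)$, the Hamilton--type estimate $\dmdtm d_t(x_0,x)\ge -c\sqrt{K/t}$ (this is the lemma underlying Corollary \ref{lemme_distorsion_distances}) together with the Laplacian comparison $\Delta d_t\le \frac{2}{d_t}+c\sqrt{K/t}$, and combining the two gives $(\del_t-\Delta)\phi\ge -c\big(\tfrac{1}{A^2}+\tfrac{\sqrt K}{A\sqrt t}\big)$.

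The main step is then to run the localized minimum principle of Appendix \ref{appendix_local} on $\mathrm{Rm}$, with $\phi$ inserted into the support function of $\calK_t$ --- equivalently, to argue by contradiction at a first space-time point where $\phi$ times the pinching defect attains a forbidden value. At such a point the ODE--invariance of $\calK_t$ is perturbed only by the first-- and zeroth--order cut-off contributions above; integrating these against the logarithmic and reciprocal weights carried by the pinching function, and using $\int_0^t s^{-1/2}\,ds\le 2\sqrt t$ to absorb the term $\sqrt K\,A^{-1}s^{-1/2}$, produces a bound of the shape $-c\,\tfrac{1+t}{A^2}$; keeping track of constants pins down the factor $200$, and the range $d_t(x_0,x)\le a-20\sqrt{Kt}$ is simply where $\phi\equiv1$, in view of Corollary \ref{lemme_distorsion_distances}.

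I expect the real obstacle to be entirely ``soft''. On the one hand $\lambda_1$, as a lowest eigenvalue, is only Lipschitz, and $d_t(x_0,\cdot)$ is only Lipschitz in $x$ with a one-sided time derivative, so none of the differentiations above can be performed naively: the argument must be phrased through barriers and support functions, which is exactly what the local minimum principles of Appendix \ref{appendix_local} provide and is the route followed by B.\,L.\ Chen and by Z.-H.\ Zhang \cite{Zhang_2015}. On the other hand one must check that the cut-off perturbation does not spoil the convexity--based invariance of $\calK_t$, i.e.\ that the extra terms point into $\calK_t$ up to the controlled error along its boundary; this reduces to a computation with the support function of the pinching cone. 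Once these two points are settled the argument is the standard one, and the content of the present statement is only to record it in the non-complete setting, where relative compactness of $B_t(x_0,a+A)$ is precisely what makes the localized principle applicable.
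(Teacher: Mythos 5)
Your overall strategy (localize a Hamilton--Ivey/Zhang-type pinching estimate by a distance-based cut-off, using the $\frac{K}{t}$ bound to control the cut-off's evolution) is the right one, but two essential ingredients of a working proof are missing, and one step as written would fail. First, you propose to localize Hamilton's maximum principle \emph{for systems}, inserting the cut-off into the support function of the time-dependent cone $\calK_t$, and you defer the verification that the cut-off ``does not spoil the convexity-based invariance'' to a computation you do not perform. That verification is the crux, and it is precisely what the argument of B.~L.~Chen (followed verbatim here, and by Zhang) is designed to avoid: one never localizes the vector-valued principle. Instead one introduces the \emph{scalar} quantity $u=\frac{\scal}{-\lambda_1}-\ln\bigl(-\lambda_1(1+t)\bigr)+3$ (set $u=+\infty$ where $\lambda_1\ge 0$) and proves, from the $3$-dimensional reaction ODE, the pointwise differential inequality $(\del_t-\Delta)u\ \ge\ \frac{u^2}{2(1+t)}$ wherever $u<0$. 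The superlinear reaction term is the mechanism that closes the localized argument: at a negative minimum of $v=\Phi u$ one gets $0\ge \calC(\Phi)\,u+\frac{u^2}{2(1+t)}$, hence $u\ge -2(1+t)\calC(\Phi)$, a bound independent of the solution. Your proposal never identifies either the scalar quantity or this quadratic term, so the ``contradiction at a first violation point'' has no way to absorb the cut-off error, which multiplies the \emph{unknown} pinching defect.

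Second, your cut-off estimate retains the singular term $\frac{\sqrt K}{A\sqrt t}$ and you propose to dispose of it by integrating $\int_0^t s^{-1/2}\,ds$. This does not work: in a minimum-principle argument the error enters as a coefficient of $u$ at the (unknown, possibly very small) minimum time $\ut$, so it cannot simply be integrated; taken at face value it would give a bound of order $\frac{(1+\ut)\sqrt K}{A\sqrt{\ut}}$, which is unbounded as $\ut\to 0$, and even an integrated version would produce an error $\sim\frac{\sqrt{K t}}{A}$, i.e.\ $K$-dependent and of the wrong shape --- note the conclusion's error $\frac{200(1+t)}{A^2}$ contains no $K$. The correct device is to make the cut-off's level sets move inward at speed comparable to the distance distortion: with $\Phi(x,t)=\phi\Bigl(\frac{d_t(x_0,x)+\frac{10(n-1)}{3}\sqrt{Kt}-a}{A}\Bigr)$ and a profile $\phi$ with $\phi''+2\frac{\phi'^2}{\phi}\le 100$, Hamilton's estimate $(\del_t-\Delta)d_t\gtrsim -\sqrt{K/t}$ cancels against the time derivative of the correction $\frac{10(n-1)}{3}\sqrt{Kt}$, leaving $\del_t\Phi-\Delta\Phi+2\frac{\normes{\nabla\Phi}^2}{\Phi}\le\frac{100}{A^2}$; this is also exactly why the conclusion only holds on the shrinking region $d_t(x_0,x)\le a-20\sqrt{Kt}$ (where $\Phi\equiv 1$), and it is where the factor $200=2\cdot 100$ comes from. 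Your soft points (Lipschitz-only $\lambda_1$ and $d_t$, barrier/support-sense interpretation, $\ricci g(0)\ge -g(0)$ giving $u(\cdot,0)\ge 0$) are correctly identified, but without the scalar reduction with its quadratic reaction term and without the time-shrinking cut-off, the argument as proposed does not yield the stated estimate.
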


In the case when the initial data has Ricci curvature bounded from below, and the flow satisfies curvature estimates of the form
\[ \normes{ \riem g(x,t) } \leq \frac{K}{t} \]
one deduces the following lower bound on the Ricci curvature

\begin{corollary} \label{ricci_local}
  There exists $\epsilon_1(\alpha,K)$, such that if $g(t)$ is a Ricci flow on $(M^3,x_0) \times [0,1]$ with the following properties
 \begin{align*}
  & B_t(x_0, a+ \frac{1}{\epsilon_1}) \text{ is relatively compact in } M, \\
  & \normes{ \riem g(t) } \leq \frac{K}{t} \text{ for } x \in B_t(x_0,a), \\
  & \ricci g(0) \geq - \epsilon_1^2 \text{ on } B_0(x,a+ \frac{1}{\epsilon_1}).
 \end{align*}
 then for every $0 < t \leq 1$,
 \[ \ricci g(x,t) \geq - \frac{\alpha}{t} \]
 holds for $x \in B_t(x_0,a-20 \sqrt{K t})$.
\end{corollary}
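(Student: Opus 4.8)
The plan is to deduce Corollary~\ref{ricci_local} from the Hamilton--Ivey estimate of Proposition~\ref{ricci_pinching} after a parabolic rescaling of the flow. Rescaling by the factor $\epsilon_1^2$ does two things at once: it normalizes the initial Ricci lower bound from $-\epsilon_1^2$ to $-1$, as required by Proposition~\ref{ricci_pinching}, and it compresses the time interval to $[0,\epsilon_1^2]$. In the rescaled picture the Hamilton--Ivey inequality then carries an extra term of size $\sim\ln(1/\epsilon_1)$, and it is by playing this term against the a priori bound $\normes{\riem g}\le K/t$ that one forces the lowest Ricci eigenvalue to be small compared with $1/t$.

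First I would fix $0<t\le 1$ and $x\in B_t(x_0,a-20\sqrt{Kt})$. If the lowest Ricci eigenvalue $\lambda_1(x,t)$ is $\ge 0$ then $\ricci g(x,t)\ge 0$ and there is nothing to prove, so assume $\nu:=-\lambda_1(x,t)>0$. Set $\hat g(s):=\epsilon_1^2\,g(\epsilon_1^{-2}s)$, a Ricci flow on $[0,\epsilon_1^2]$, and check that it meets the hypotheses of Proposition~\ref{ricci_pinching} with $T=\epsilon_1^2$, the same $K$, and parameters $\hat a=\epsilon_1 a$, $\hat A=1$: the bound $\normes{\riem g}\le K/t$ is scale invariant, so $\normes{\riem \hat g(s)}\le K/s$ on $B^{\hat g}_s(x_0,\sqrt{s/K})$; distances scale by $\epsilon_1$, so the relatively compact ball $B_t(x_0,a+\tfrac1{\epsilon_1})$ of the hypothesis becomes $B^{\hat g}_s(x_0,\epsilon_1 a+1)=B^{\hat g}_s(x_0,\hat a+\hat A)$; and $\ricci g(0)\ge-\epsilon_1^2 g(0)$ there becomes the normalized bound $\ricci \hat g(0)\ge-\hat g(0)$. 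The value $\hat A=1$ is the largest one allowed by the size of the given relatively compact ball, but it suffices, since the error term $\tfrac{200(1+s)}{\hat A^2}$ is then $\le 400$ on $[0,\epsilon_1^2]$.

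Feeding this into Proposition~\ref{ricci_pinching} at the rescaled time $s:=\epsilon_1^2 t$ and the point $x$ — note that $d^{\hat g}_s(x_0,x)\le\hat a-20\sqrt{Ks}$ is, after dividing by $\epsilon_1$, exactly $d_t(x_0,x)\le a-20\sqrt{Kt}$, that $-\hat\lambda_1(x,s)=\epsilon_1^{-2}\nu$, and that the ratio $\scal \hat g/(-\hat\lambda_1)=\scal g/\nu$ is scale invariant — and using $1\le 1+s\le 2$, the conclusion of the proposition reduces to
\[
  \frac{\scal g(x,t)}{\nu}\ \ge\ 2\ln\frac{1}{\epsilon_1}+\ln\nu-C_0
\]
for a universal constant $C_0$; the $2\ln(1/\epsilon_1)$ here is precisely the gain that would be lost if one relaxed the initial bound to $\ricci g(0)\ge-g(0)$ before rescaling. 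On the other hand $x\in B_t(x_0,a)$, so $\normes{\riem g(x,t)}\le K/t$ and hence $\scal g(x,t)\le c_1 K/t$ for a dimensional constant $c_1$, and combining,
\[
  \frac{c_1 K}{t\nu}\ \ge\ 2\ln\frac{1}{\epsilon_1}+\ln\nu-C_0 .
\]
Now suppose $t\nu>\alpha$; since $t\le 1$ this gives $\nu\ge t\nu>\alpha$, hence $\ln\nu>\ln\alpha$, and the inequality above yields $\tfrac{c_1 K}{t\nu}\ge 2\ln\tfrac1{\epsilon_1}+\ln\alpha-C_0$. Choosing $\epsilon_1=\epsilon_1(\alpha,K)>0$ small enough that $2\ln\tfrac1{\epsilon_1}+\ln\alpha-C_0\ge c_1 K/\alpha$ forces $t\nu\le\alpha$, a contradiction. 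Therefore $\nu\le\alpha/t$, i.e.\ $\ricci g(x,t)\ge-\tfrac{\alpha}{t}g(x,t)$, and since $(x,t)$ was arbitrary this is the assertion.

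The one part requiring genuine care is the bookkeeping of the rescaling: one must check that every hypothesis of Proposition~\ref{ricci_pinching} survives the change of scale — in particular that $B^{\hat g}_s(x_0,\sqrt{s/K})$ stays inside the region where the curvature bound is assumed, which reduces to $\sqrt{t/K}\le a$ and is automatic on the subrange $t<a^2/(400K)$ on which the conclusion ball $B_t(x_0,a-20\sqrt{Kt})$ is nonempty (for $K$ bounded below, say $K\ge 1$ as elsewhere in the paper) — and one must keep precise track of how $\lambda_1$, $\scal$ and distances transform. No new analysis is needed beyond Proposition~\ref{ricci_pinching}; the corollary merely records that normalized Hamilton--Ivey pinching, imposed at the fine parabolic scale dictated by $\epsilon_1$, is incompatible with $-\lambda_1$ being comparable to $K/t$ rather than to $\alpha/t$, the smallness of $\epsilon_1$ being exactly what is exploited through the $2\ln(1/\epsilon_1)$ term.
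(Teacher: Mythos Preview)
Your argument is correct and follows essentially the same route as the paper: parabolically rescale so that the initial Ricci lower bound becomes $-1$, apply Proposition~\ref{ricci_pinching} to the rescaled flow, and then play the resulting Hamilton--Ivey inequality against the scale-invariant bound $\normes{\riem}\le K/t$ to force $-\lambda_1\le\alpha/t$. The only differences are cosmetic: the paper rescales by a factor $\epsilon=20\,\epsilon_1$ rather than $\epsilon_1$ (so that $\hat A=20$ and the error term $200(1+s)/\hat A^2$ is $\le 1$ instead of $\le 400$), and it extracts the conclusion by directly inverting the function $\psi(x)=x(\ln x-4)$ rather than by your contradiction argument; both lead to the same dependence $\epsilon_1\sim c\,e^{-K/(2\alpha)}$.
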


\begin{proof}
 Let us consider, for some $\epsilon>0$ to be fixed later, the flow
  \[ h(s) = \epsilon^2 g(\frac{s}{\epsilon^2}) \]
 defined for $0 \leq s \leq \epsilon^2$. Suppose that for some $A>1$ to be determined, the following holds: for every $0  \leq t \leq 1$, $B_{g(t)}(x_0,a+\frac{A}{\epsilon})$ is relatively compact in 
 $B_{g(0)}(x_0,a+\frac{A}{\epsilon})$, $\normes{ \riem g(x,t) } < \frac{K}{t}$ on $B_{g(0)}(x_0,a)$ and $\ricci g(0) \geq - \epsilon^2$. The flow $h$ satisfies the hypothesis of proposition
 $(\ref{ricci_pinching})$. Thus if $(x,s)$ is such that $x \in B_{h(s)}(x_0,\epsilon a - 20 \sqrt{K s})$, $0 \leq s \leq \epsilon^2$, and such $\tlambda_1 <0$, where $\tlambda_1$ denotes the lowest
 eigenvalue of the Ricci tensor of $h$ at $(x,s)$, then
 \[ \frac{\scal h}{-\tlambda_1} - \ln (-\tlambda_1)(1+s)  \geq -3-\frac{200 (1+s)}{A^2} \]
 Choosing now $A=20$, and since $\normes{\riem h(x,s)} \leq \frac{K}{s}$, the above inequality can be rewritten
 \[ \frac{K}{-s \tlambda_1} \geq -4+\ln (-\tlambda_1(1+s)), \]
 then
 \[ \left( 1 + \frac{1}{s} \right) K \geq (-\tlambda_1)(1+s) \left( -4 + \ln ( (-\tlambda_1)(1+s) ) \right). \] 
 Let us consider the one-variable function
 \[ \begin{array}{cc} \psi: & x \rightarrow x (\ln(x) - 4) \end{array}, \]
 is an increasing bijection between $[e^4,+\infty[$ and $\setR^+$. For every $\alpha>0$, there exists $x_\alpha > e^4$ such that $\frac{\psi^{-1}(x)}{x} \leq \alpha$ as soon as $x \geq x_\alpha$ 
 (set for instance $x_\alpha = \frac{1}{\alpha} e^{\frac{1}{\alpha}+4}$). In particular,
 \[ -\tlambda_1 (1+s) \leq \psi^{-1}( (1+\frac{1}{s}) K) \leq \alpha (1 + \frac{1}{s}) \]
 as long as $(1+\frac{1}{s}) K \geq x_{\frac{\alpha}{K}}$, that is, $s \leq \frac{K}{ x_{\frac{\alpha}{K}} }$. Thus if we fix $\epsilon=\sqrt{K x_{\frac{\alpha}{K}}}$, $\epsilon_1=\frac{\epsilon}{A}$, we 
 find
 \[ \ricci h (x,s) \geq -\frac{\alpha}{s} \]
 for every $(x,s)$ with $x \in B_{h(s)}(x_0, \epsilon a - 20 \sqrt{K s})$, $0 \leq s \leq \epsilon^2$, or, in terms of the flow $g$, 
 \[ \ricci g(x,t) \geq - \frac{\alpha}{t} \]
 for every $x \in B_{g(t)}(x_0,a-20 \sqrt{K t})$, with $0 \leq t \leq 1$. (one can check that $\epsilon_1 = \frac{c}{\sqrt{\alpha}} e^{-\frac{K}{2 \alpha}}$ works) 
\end{proof}

\section{A metric lemma}

In this section we prove a technical lemma, which is essentially a quantitative version of fact $\ref{label_intro_2}$, when the non-contraction $d(f(x),f(x')) \geq d(x,x')$ assumption is weakened to 
$d(f(x),f(x')) \geq d(x,x') - \delta$ for some small $\delta$.

 \begin{lemma} \label{technical_lemma}
  There exist $v(n,\calI_0,A)>0$ and $\delta(n,\calI_0,A,\eta)>0$ such that the following holds.
  Let $(M,g,x_0)$ be a Riemannian manifold such that
 \[ \begin{array}{ll}
  (a) & B(x_0, 1) \text{ is relatively compact in $M$,} \\
  (b) & \ricci g \geq -\frac{1}{\eta^2} \text{ on $B(x_0,1)$, } \\
  (c) & \vol{n} U \leq \calI_0 \left( \vol{n-1} \bound U \right)^{\frac{n}{n-1}} \text{ for any } U \subset B(x_0,1). 
 \end{array} \]
  Let $(N,h,y_0)$ be a Riemannian manifold such that
 \[ \begin{array}{ll}
  (d) & B(y_0, 1) \text{ is relatively compact in $N$,} \\
  (e) & \ricci h \geq -\frac{1}{\eta^2} \text{ on $B(y_0,1)$, } \\
  (f) & \vol{h} B(y,r) \geq \frac{\omega_n}{2} r^n \text{ for every $y \in N$, $0 < r < \eta \delta$.}
 \end{array} \]
 Let finally $\psi: (B(x_0,1),x_0) \rightarrow (N,y_0)$ be an embedding with 
 \[ \begin{array}{ll}
  (g) & d_N(\psi(x),\psi(x')) \geq d_M(x,x') - \delta \text{ for every $x,x' \in B(x_0,1)$,} \\
  (h) & \vol{h} \psi(U) \leq A \vol{g} U \text{ for any $U \subset B(x_0,1)$, } \\
 \end{array} \] then
 \[ \vol{h} B(y_0,r) \geq v r^n \text{ for any } 0 < r \leq 1.\]
 moreover $d_N(\psi(x),\psi(x')) \leq \frac{c(n) A}{\eta^{n-1}} \left( d_M(x,x') + \delta \right)$.
 
 \end{lemma}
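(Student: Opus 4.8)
With $\delta=0$ the statement is essentially trivial: non‑contraction forces $|d\psi|_x\ge 1$ at every point, and shrinking $U$ to a point in (h) gives $|\mathrm{Jac}\,\psi|_x\le A$, so every singular value of $d\psi_x$ lies in $[1,A]$ and $\psi$ is $A$‑bi‑Lipschitz; then $d_N(\psi x,\psi x')\le A\,d_M(x,x')$, while $\mathrm{vol}_h B_N(y_0,r)\ge\mathrm{vol}_h\psi(B_M(x_0,r/A))\ge\mathrm{vol}_g B_M(x_0,r/A)$, which is $\ge c(n,\mathcal I_0)(r/A)^n$ by the isoperimetric inequality (c). The entire difficulty is to make this robust against the additive error $\delta$, which kills the pointwise lower bound on $d\psi$. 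I would proceed in four steps. \emph{Step 1 (volumes of balls in $M$):} feeding $U=B_M(x_0,\rho)$ into (c) and using the coarea formula for $d_M(x_0,\cdot)$ gives $V(\rho):=\mathrm{vol}_g B_M(x_0,\rho)\le\mathcal I_0 V'(\rho)^{n/(n-1)}$ a.e., so integrating $\tfrac{d}{d\rho}V^{1/n}\ge (n\mathcal I_0^{(n-1)/n})^{-1}$ from $0$ yields $\mathrm{vol}_g B_M(x_0,\rho)\ge c_1(n,\mathcal I_0)\rho^n$ for $0<\rho\le1$. \emph{Step 2 (the image covers balls):} $\psi$ is an open map (immersion between equidimensional manifolds), so $\psi(B_M(x_0,s))$ is open, and it is connected since $B_M(x_0,s)$ is geodesically star‑shaped about $x_0$; a boundary point is a limit $\lim_k\psi(z_k)$ with $d_M(z_k,x_0)\to s$, hence at $d_N$‑distance $\ge s-\delta$ from $y_0$ by (g), so a clopen argument on $B_N(y_0,s-\delta)$ gives $B_N(y_0,s-\delta)\subseteq\psi(B_M(x_0,s))$ for all $s\le1$. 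In particular $\psi$ restricts to a homeomorphism of $\psi^{-1}(B_N(y_0,s-\delta))$ onto $B_N(y_0,s-\delta)$ (so preimages of balls are connected), $\psi^{-1}$ is $1$‑Lipschitz up to the error $\delta$ by (g), and $\mathrm{vol}_g\psi^{-1}(W)\ge A^{-1}\mathrm{vol}_h W$ by (h); the same holds with $x_0$ replaced by any point of $B_M(x_0,1)$ around which a small enough ball is relatively compact.

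\textbf{Step 3: the upper distance bound.} This is the heart of the matter. Fix $x,x'$ in a fixed smaller ball (so the minimizing geodesic $\gamma$ between them stays in $B_M(x_0,1)$; harmless after rescaling) and set $\ell=d_M(x,x')$, $L=d_N(\psi x,\psi x')$. Choose $r=(1+\eta)\delta$, which satisfies $\delta<r<(1+2\eta)\delta$ and, once $\delta$ is small in terms of $\eta$, also $r<\eta$. The tube $T_r=\{z:d_M(z,\gamma)<r\}$ has $\mathrm{vol}_g T_r\le c_2(n)\max(\ell,r)\,r^{n-1}$ by the Bishop--Gromov / Heintze--Karcher tube estimate (the curvature correction from (b) is negligible since $r\ll\eta$). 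By Step 2 applied at each $\gamma(t)$, $\psi(T_r)\supseteq\bigcup_t B_N(\psi\gamma(t),r-\delta)$, the $(r-\delta)$‑neighbourhood of the curve $\psi\gamma$; since $\psi\gamma$ joins $\psi x$ to $\psi x'$ it meets $\partial B_N(\psi x,s)$ for every $0\le s\le L$, so one finds $\gtrsim L/(r-\delta)$ points $q_j$ on it that are $2(r-\delta)$‑separated in $N$. The balls $B_N(q_j,(r-\delta)/2)$ are pairwise disjoint, contained in $\psi(T_r)$, and each has $\mathrm{vol}_h\ge\tfrac{\omega_n}{2}((r-\delta)/2)^n$ by (f), because $(r-\delta)/2<\eta\delta$. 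Hence $\tfrac{L}{r-\delta}((r-\delta)/2)^n\lesssim_n\mathrm{vol}_h\psi(T_r)\le A\,\mathrm{vol}_g T_r\lesssim_n A\max(\ell,r)r^{n-1}$; taking $r-\delta=\eta\delta$ this rearranges to $L\le\tfrac{c(n)A}{\eta^{n-1}}(\ell+\delta)$, which is the second assertion of the lemma.

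\textbf{Step 4: transferring the isoperimetric inequality.} For open $W$ with $\overline W\subseteq B_N(y_0,1-\delta)$ set $U=\psi^{-1}(W)$; then $\mathrm{vol}_h W\le A\,\mathrm{vol}_g U\le A\mathcal I_0(\mathcal H^{n-1}\partial U)^{n/(n-1)}$ by (h) and (c). Since $\psi^{-1}$ carries each ball $B_N(w,\delta)$ into a set of $d_M$‑diameter $\le3\delta$, covering $\partial W$ by $\delta$‑balls gives $\mathcal H^{n-1}(\partial U)=\mathcal H^{n-1}(\psi^{-1}\partial W)\le c(n)(\mathcal H^{n-1}\partial W+\delta^{n-1})$; hence $N$ satisfies, near $y_0$ and once the perimeter exceeds $\delta^{n-1}$, an isoperimetric inequality with constant $C_N=C_N(n,\mathcal I_0,A)$ independent of $\eta$. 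Plugging $W=B_N(y_0,s)$ into it and running the Step 1 ODE argument on $N$, seeded by (f) (which gives $\mathrm{vol}_h B_N(y_0,\rho)\ge\tfrac{\omega_n}{2}\rho^n$ for $\rho<\eta\delta$) and integrated outward from scale $\sim\delta$, produces $\mathrm{vol}_h B_N(y_0,r)\ge v(n,\mathcal I_0,A)\,r^n$ for $r\le1$; the scales $r<\eta\delta$ are handled by (f) directly, and the constant is made $\eta$‑free exactly because it is built from $C_N$ rather than from the distance bound of Step 3.

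\textbf{Main obstacle.} Step 3 is the real work, and the only place $\eta$ is forced in. When $\delta>0$ there is no pointwise control of $d\psi$ — $\psi$ may contract strongly on small sets — so the near‑non‑expansion of distances can only be recovered macroscopically, and the single mechanism for this is the conflict between the controlled volume of thin geodesic tubes in $M$ (Ricci bound (b), plus the isoperimetric volume lower bound of Step 1) and the small‑scale non‑collapsedness (f) of $N$; tuning the tube radius to $r\asymp\delta$ is unavoidable and is what produces the factor $\eta^{-(n-1)}$. The remaining fussy points are the relative‑compactness/near‑boundary bookkeeping (working with a slightly smaller source ball, which is innocuous after rescaling) and, in Step 4, absorbing the additive $\delta^{n-1}$ perimeter error and matching the regimes $r<\eta\delta$ and $r\gtrsim\delta$.
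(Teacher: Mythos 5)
Your Steps 1--3 are sound and in fact run parallel to the paper's own preliminary steps: the ODE integration of hypothesis $(c)$, the topological argument showing $B_N(y_0,s-\delta)\subset\psi(B_M(x_0,s))$, and the distance upper bound obtained by packing the image curve with disjoint balls of radius $\sim\eta\delta$ (controlled below by $(f)$) inside a region of $M$-volume $\lesssim A(\ell+\delta)\delta^{n-1}$ are all present, essentially verbatim, in the paper. The problem is Step 4, which is where the whole content of the lemma lies (the point, as the remark following the statement stresses, is that $v$ must not depend on $\eta$). Your transfer of the isoperimetric inequality hinges on the estimate $\calH^{n-1}(\psi^{-1}\partial W)\leq c(n)\left(\calH^{n-1}\partial W+\delta^{n-1}\right)$, and this is not justified: a map whose inverse is only $1$-Lipschitz up to an additive error $\delta$ can increase $(n-1)$-dimensional measure by an unbounded factor, because it says nothing at scales below $\delta$. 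To get your estimate from a covering of $\partial W$ by $\delta$-balls you need the covering number to be $\lesssim\calH^{n-1}(\partial W)\delta^{-(n-1)}$, i.e.\ an Ahlfors-type lower density bound $\calH^{n-1}(\partial W\cap B(w,\delta))\gtrsim\delta^{n-1}$ at points of $\partial W$. Under the stated hypotheses the geometry of $N$ between the scales $\eta\delta$ and $\delta$ is completely uncontrolled: $(f)$ and the Ricci bound are compatible with regions of $B_N(y_0,1)$ that are $\eta\delta$-thin at scale $\delta$ (and $\psi$ may contract onto such regions, since $(g)$ is vacuous below scale $\delta$), and there the density of a sphere $\partial B_N(y_0,s)$ at scale $\delta$ is only of order $\eta\delta^{n-1}$. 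Consequently the best one can extract from your covering argument is an isoperimetric constant $C_N$ that degenerates like a power of $\eta^{-1}$, and then the resulting $v$ depends on $\eta$ --- exactly the dependence the lemma must avoid (the paper itself records the analogous $\eta$-dependent bound as only a ``preliminary'' estimate, its inequality $(\ref{minoration_v1})$).

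This is why the paper proves the $\eta$-free bound by a compactness argument rather than a direct one: letting $\delta_k\to 0$ along a putative sequence of counterexamples, it extracts Gromov--Hausdorff limits of both balls, uses Cheeger--Colding volume convergence (the limits are non-collapsed thanks to your Step 1 and the $\eta$-dependent preliminary bound), and obtains a limit map whose inverse is genuinely $1$-Lipschitz; for such a map $\calH^{n-1}$ does not increase under pullback, so the isoperimetric inequality for balls around $\by_0$ transfers with a constant $\calI(n,\calI_0,A)$ and integrates to $v(n,\calI_0,A)r^n$ in the limit, hence in $N_k$ for large $k$ (small $r$ being handled by Bishop--Gromov from scale $\eta$ downward). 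So your outline is fine up to and including the Lipschitz-type bound, but the ``heart of the matter'' is not Step 3 --- it is the $\eta$-independent volume bound, and your direct perimeter-transfer argument for it has a genuine gap that the paper's limiting argument is specifically designed to circumvent. If you want a proof without compactness you would need a substitute mechanism controlling $(n-1)$-content of spheres of $N$ at scale $\delta$ with $\eta$-free constants, which the hypotheses do not obviously provide.
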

 
 \begin{remark}
  As announced, the conclusion of the lemma is the analogue of that of fact $\ref{label_intro_2}$, the assumption that $f$ be non-contracting being weakened to the $\delta$-non-contraction property $(g)$. In compensation, we impose
  (in addition to Ricci curvature lower bounds) a $\eta \delta$-regularity scale assumption $(f)$ on the goal metric $h$. This regularity scale can be much smaller than the non-contraction scale $\delta$. Indeed, recall that for
  our purpose, the non contraction scale $\delta$ is of magnitude $\sqrt{Kt}$, while the Euclidian regularity scale of the flow is of magnitude $\sqrt{\frac{t}{K}}$, so $\eta$ is of magnitude $\frac{1}{K}$. 
  Thus, while the maximum $\delta$ for which the conclusion holds can depend on the ratio $\eta$ between both scales (this will correspond to an upper bound on $t$), it is essential that the volume lower bound $v$ does not.
 \end{remark}

  \begin{proof}
 
  We start with several observations. Let $(M,x_0)$, $(N,x_0)$ be manifolds satisfying hypothesis $(a)$ through $(h)$ for given $1> \eta >> \delta > 0$. Then
  \begin{itemize}
  
   \bigskip
   
   \item[\bf(i)] The isoperimetric control on $B(x_0,1)$ implies a lower bound on the volume of balls. Namely for every $x \in B(x_0,1-r)$, 
   \begin{equation}
    \vol{} B(x,r) \geq v_0(n,\calI_0) r^n  \label{minoration_v0}
   \end{equation}
   where $v_0$ only depends on $n$ and on $\calI_0$. Moreover the upper bounds $\vol{M} B(x,r) \leq C(n) r^n$, $\vol{N} B(y,r) \leq C(n) r^n$ for $x \in B(x_0,1)$, $y \in B(y_0,1)$ 
   and $r \leq \eta$ are obvious consequences of $(b)$ and $(e)$.

   \bigskip

   \item[\bf(ii)] For $x,x' \in B(x_0,\frac{1-\delta}{2})$, we have the following distances estimate, depending on $\eta$.
   \begin{equation}
    d_N(\psi(x),\psi(x')) \leq \lambda(n,A,\eta) \left( d_M(x,x') + \delta \right). \label{psi_haussdorf_lipschitz}
   \end{equation}
   with $\lambda(n,A,\eta) = \frac{c(n) A}{\eta^{n-1}}$.
   To see this, consider $x,x' \in B(x_0,1 - \delta )$ with $d_M(x,x') \leq \delta$, and $\gamma \subset N$ the image by $\psi$ of a minimizing geodesic $\overline{x x'}$ (in particular, $\overline{xx'} \subset B(x_0,1)$). Let 
   $\{y_i\}_{i \in I}$ be a maximal $\delta \eta$-packing\footnote{A subset $X$ of a metric space $\calX$ is an $r$-packing if for any $x,x' \in X$, $d(x,x') \geq r$, or equivalently, if the open balls $B(x,\frac{r}{2})$ for 
   $x \in X$ are pairwise disjoint. If $X$ is a maximal $r$-packing of $\calX$, then for any $y \in \calX$, there exists $x \in X$ such that $d(x,y) < r$ - in other words $X$ is $r$-covering.} (with regard to the distance $d_N$) 
   of $\gamma$, and let $V=\bigcup_{i \in I} B(y_i, \frac{\eta \delta}{2})$ be the union of disjoint balls. We now make use of the following fact:
   
   \begin{fact}
    Let $\gamma$ be a path (a continuous map $\gamma: [0,1] \rightarrow N$) between $y$ and $y'$ in a metric space $N$. Then any maximal $r$-packing (for the distance $d_N$) of $\gamma$ has at least 
    $\frac{d_N(y,y')}{2r}$ elements.
   \end{fact}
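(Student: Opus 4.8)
The plan is to exploit the connectedness of $\gamma([0,1])$ via a standard chaining argument through the finite open cover furnished by the packing. Write $X=\{x_1,\dots,x_m\}$ for the given maximal $r$-packing of $\gamma([0,1])$; this set is finite since $\gamma([0,1])$ is compact (a continuous image of $[0,1]$) and the points of $X$ are pairwise at distance $\ge r$, and by maximality $X$ is $r$-covering, i.e.\ every point of $\gamma([0,1])$ lies within distance $<r$ of some $x_i$. Setting $A_i=\gamma^{-1}\big(B(x_i,r)\big)\subset[0,1]$ (the ball being taken in $N$), the sets $A_1,\dots,A_m$ are open and cover $[0,1]$; moreover $0\in A_i$ for some $i$ with $d_N(y,x_i)<r$, and $1\in A_j$ for some $j$ with $d_N(y',x_j)<r$.

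The key step is the following elementary ``chain'' lemma for open covers of a connected space: if $\{A_i\}_{i=1}^m$ covers $[0,1]$ and $p,q\in[0,1]$, then there is a sequence $A_{i_1},\dots,A_{i_k}$ with \emph{pairwise distinct} indices (hence $k\le m$) such that $p\in A_{i_1}$, $q\in A_{i_k}$, and $A_{i_\ell}\cap A_{i_{\ell+1}}\ne\emptyset$ for $1\le\ell<k$. I would prove this by the usual clopen argument: the set $C$ of all $q'\in[0,1]$ admitting such a chain from $p$ contains $p$, is open (if $q'\in C$ with a chain ending in $A_{i_k}$, then every point of $A_{i_k}$ inherits that same chain), and has open complement (if $q'\notin C$ lies in some cover member $A_i$, then no point of $A_i$ can lie in $C$, since from a chain reaching such a point one either truncates at $A_i$, or appends $A_i$, to obtain a chain reaching $q'$ --- the distinctness of indices being what makes the appended case legitimate); connectedness of $[0,1]$ then forces $C=[0,1]$.

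Applying the chain lemma with $p=0$, $q=1$ produces $x_{i_1},\dots,x_{i_k}$ with $k\le m$, with $d_N(y,x_{i_1})<r$ and $d_N(y',x_{i_k})<r$, and such that for each $\ell$ there is a parameter $t$ with $\gamma(t)\in B(x_{i_\ell},r)\cap B(x_{i_{\ell+1}},r)$, whence $d_N(x_{i_\ell},x_{i_{\ell+1}})<2r$. The triangle inequality along this chain gives
\[
 d_N(y,y')\ \le\ d_N(y,x_{i_1})+\sum_{\ell=1}^{k-1}d_N(x_{i_\ell},x_{i_{\ell+1}})+d_N(x_{i_k},y')\ <\ r+2r(k-1)+r\ =\ 2rk\ \le\ 2rm,
\]
so $m> d_N(y,y')/(2r)$, which is (more than) the assertion. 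I do not expect a genuine obstacle here: the only points requiring care are insisting on distinct indices in the chain so that its length is bounded by $m$, and the degenerate cases $d_N(y,y')=0$ or $X=\emptyset$ (the latter impossible because $\gamma$ is nonempty), in which the inequality holds vacuously.
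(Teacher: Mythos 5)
Your proof is correct and follows essentially the same route as the paper: both exploit that a maximal $r$-packing is $r$-covering, use connectedness (you via a clopen chaining argument on $[0,1]$, the paper via connectedness of the $2r$-proximity graph on the packing points, proved by the same disjoint-open-balls idea) to produce a chain of at most $|I|$ packing points with consecutive distances $<2r$ joining a point near $y$ to one near $y'$, and conclude by the triangle inequality that $d_N(y,y')\leq 2r|I|$.
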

   
   \begin{proof}
    Let $\{y_i\}_{i \in I}$ be an $r$-packing of $\gamma$.
    Consider the graph whose vertices are the $y_i$ and where there exists an edge between $y_i$ and $y_j$ when $d_N(y_i,y_j) \leq 2 r$. This graph is connected: indeed, if $I = I_0 \sqcup I_1$,  
    where $I_0$ is the set of indices of a (non-empty) connected component of the graph, one has $d_N(y_i,y_{i'}) > 2r$ for every $i \in I_0$, $i' \in I_1$. The open subsets $V_0=\bigcup_{i \in I_0} B(y_i, r)$ and 
    $V_1=\bigcup_{i \in I_1} B(y_i, r)$ of $\calX$ are disjoint. Meanwhile $\{y_i\}_{i \in I}$ is an $r$-covering of $\gamma$, so $\gamma \subset V_0 \cup V_1$. $\gamma$ being a continuous path, it is entirely contained 
    either in $V_0$ or in $V_1$, and since $y_i \in \gamma$ for all $i \in I$, the only possibility is $I_0=I$ and $I_1=\emptyset$ ($I_0$ was supposed non-empty in the first place). Then obviously, any two points of a connected 
    graph are connected by a path whose length is at most the size of the graph. Thus $d_N(y,y') \leq 2 r |I|$.
   \end{proof}
   
   Here, since $\vol{} V \geq \sum_{i \in I} \vol{} B(y_i, \frac{\delta \eta}{2})$, by using the fact above as well as assumption $(f)$, one finds
   \begin{equation} \label{volume_V}
    \vol{} V \geq \frac{\omega_n}{2^{n+2}} ( \delta \eta )^{n-1} d_N(\psi(x),\psi(x')). 
   \end{equation}
   Meanwhile $\overline{xx'} \subset B(x,\delta)$, thus from assumption $(g)$, $\psi^{-1}(V) \subset B(x, 2\delta)$. Assumption $(h)$ then implies
   $\vol{} V \leq A \vol{} B(x,\delta) \leq c(n) \delta^n$. Comparing with $(\ref{volume_V})$, one finds
   \[ d_N(\psi(x),\psi(x')) \leq  \frac{c(n)A}{\eta^{n-1}} \delta. \]
   Finally, considering $x,x' \in B(x_0,\frac{1-\delta}{2})$ and dividing a minimizing geodesic $\overline{xx'} \subset B(x_0,1-\delta)$ in at most $\frac{d_M(x,x')}{\delta}+1$ segments of length less than 
   $\delta$, one gets $(\ref{psi_haussdorf_lipschitz})$.

   \bigskip

   \item[\bf (iii)] The image of $\psi$ contains a ball of fixed size, namely $B(y_0,\frac{1}{2}-2 \delta) \subset \psi(B(x_0, \frac{1-\delta}{2})).$ Indeed, since $\psi$ is an embedding,
   \[ \psi ( \bound B(x_0, \frac{1-\delta}{2} ) ) = \bound \left( \psi(B(x_0,\frac{1-\delta}{2})) \right). \]
   Thus, if $y \in \bound \left( \psi(B(x_0,\frac{1-\delta}{2}) \right)$, $y=\psi(x)$ for some $x \in S(x_0,\frac{1-\delta}{2})$, so from assumption $(g)$, $d(y_0,y) \geq \frac{1-\delta}{2} - \delta$. 
   So if there were some $z \in N \setminus \psi(B(x_0,\frac{1-\delta}{2}))$ with $d(y_0,z) \leq \frac{1}{2}-2 \delta$, there would also exist $y \in \overline{y_0 z} \cap \bound \left( \psi(B(x_0,\frac{1-\delta}{2}) \right)$, 
   in contradiction with what we just said.

   \bigskip

   \item[\bf(iv)] We get a preliminary volume lower bound (depending on $\eta$)
   \begin{equation}
    \vol{} B(y_0,1) \geq v_1(n, \calI_0, \eta). \label{minoration_v1}
   \end{equation}
   Indeed, assuming $\delta < \frac{1}{2 \lambda}$, one gets $\psi(B(x_0,\frac{1}{2 \lambda})) \subset B(y_0,1)$ from $(\ref{psi_haussdorf_lipschitz})$. Now if $\{x_i\}_{i \in I}$ is a maximal $2 \delta$-packing of 
   $B(x_0,\frac{1}{2 \lambda})$, $\{x_i\}_{i \in I}$ is also $2 \delta$-covering, so we have on the one hand $\vol{} B(x_0,\frac{1}{2 \lambda}) \leq \sum_{i \in I} \vol{} B(x_i,\delta)$, 
   and thus, from $(\ref{minoration_v0})$,
   \[ \frac{v_0(n, \calI_0)}{(2 \lambda)^n}  \leq C(n) \delta^n |I|. \]
   On the other hand, by assumption $(g)$, the $y_i=\psi(x_i)$ for $i \in I$ form a $\delta$-packing of $B(y_0,1)$. In particular, thanks to assumption $(f)$,
   \[ \vol{} B(y_0,1) \geq c(n) |I| (\eta \delta)^n, \]
   so finally $\vol{} B(y_0,1) \geq c(n) \frac{ v_0(n, \calI_0) \eta^n}{\lambda^n}$.
     
  \end{itemize}

  \bigskip

  The proof of the lemma is by contradiction. One fixes $\calI_0, A, \eta>0$ and considers a sequence $\delta_k \rightarrow 0$, as well as sequences $(M_k,x_{0,k})$,$(N_k,y_{0,k})$, and $\psi_k$ verifying 
  hypothesis $(a)$ through $(i)$.

  \bigskip

  \begin{itemize}
   \item[\bf(v)] We first extract limit spaces $(X,\bx_0)$, $(Y,\by_0)$, and a map $\bpsi: X_0 \subset X \rightarrow Y$ from the sequence of the $\psi_k: M_k \rightarrow N_k$. By Gromov's (relative) compactness theorem for the 
   space of manifolds with Ricci curvature bounded from below, there exist complete metric spaces $(X,\bx_0)$ and $(Y,\by_0)$ such that, after extraction,
   \begin{align*}
    & \left( \overline{ B \left( x_{0,k}, \frac{1-\delta_k}{2} \right) } , x_{0,k} \right)  \underset{G-H}{\longrightarrow} (X,\bx_0),\\
    & \left( \overline{ B \left( y_{0,k}, \frac{1}{2} - 2 \delta_k \right) } , y_{0,k} \right)  \underset{G-H}{\longrightarrow} (Y,\by_0).
   \end{align*}
   
   Moreover, since in both cases the limit is non-collapsed, (by $(\ref{minoration_v0})$ and $(\ref{minoration_v1})$) results from Cheeger-Colding's theory (see \cite{Cheeger_2001}) guaranties that the sequences of Riemannian 
   measures $\dvol{g_k(x)}$ on $B(x_{0,k},\frac{1-\delta_k}{2})\subset M_k$ and $\dvol{h_k(x)}$ on $B(y_{0,k},\frac{1}{2}-2 \delta_k) \subset N_k$ converge toward the $n$-dimensional Hausdorff measure on $\calH^n_X$ and 
   $\calH^n_Y$ respectively.

   \bigskip

   Since by $(iii)$, $B(y_{0,k},\frac{1}{2}-2 \delta_k)$ is contained in the image of the map $\psi_k$, we first construct an application $\bPsi: Y \rightarrow X$ as the limit of the $\psi_k^{-1}$. To do this consider a dense 
   subset $\calY=\{\by_i\}_{i \in I}$ of $Y$, their lifts $y_{i,k} \in B(y_{0,k},\frac{1}{2}-2 \delta_k)$ with $y_{i,k} \rightarrow \by_i$, as well as their preimages $x_{i,k}=\psi_k^{-1}(y_{i,k})$ in $M_k$. By assumption $(g)$,
   $x_{i,k} \in B(x_{0,k},\frac{1}{2}-\delta_k)$. Thus we assume, after extraction, that the $x_{i,k}$ converge toward limits $\bx_i \in X$. We define the map $\bPsi: \calY \rightarrow X$ by setting $\bPsi(\by_i)=\bx_i$. The 
   property that   
   \[ d_X(\bPsi(\by_i),\bPsi(\by_{i'})) \leq d_Y(\bx_i,\bx_{i'}) \]
   for all $i,i' \in I$ is immediate. The map $\bPsi$ being in particular uniformly continuous, it extends to a map defined on the complete space $Y$, with   
   \[ \frac{1}{\lambda} d_Y(y,y') \leq d_X(\bPsi(y),\bPsi(y')) \leq d_Y(y,y') \]
   for every $y,y' \in Y$, the lower bound being got by taking the limit in $(\ref{psi_haussdorf_lipschitz})$. In particular, $\bPsi$ is a homeomorphism on $X_0:=\bPsi(Y)$, and its inverse $\bpsi:=\bPsi^{-1}: X_0 \rightarrow Y$
   satisfies the following properties
   \begin{align*}
    & d_X(x,x') \leq d_Y(\bpsi(x),\bpsi(x')) \leq \lambda d_X(x,x') \text{ for all } x,x' \in X_0, \\
    & \calH^n_Y \bpsi(U) \leq A \calH^n_X U \text{ for any } U \subset X_0.
   \end{align*}
   Moreover, $\psi$ being non-contracting, an elementary property of Hausdorff measures yields
   \[ \calH^m_Y \bpsi(U) \geq \calH^m_X U \text{ for any } U \subset X_0, m \in \setN. \]

   \bigskip

   \item[\bf(vi)] The existence of the map $\bpsi$ with the properties listed above implies a control on the isoperimetric ratio of balls centered at $\by_0$ in $Y$. Let $0<r \leq \frac{1}{2}$ be fixed and consider
   \[ \begin{array}{cc} B = B(\by_0,r), & S = S(\by_0,r) \end{array}, \]
   as well as the corresponding sets $B_k=B(y_{0,k},r)$, $S_k=S(y_{0,k},r)$ in $N_k$. Consider also the preimages $\tS \subset X_0$ by $\bpsi$ and $\tB_k, \tS_k \subset M_k$ by $\psi_k$.

   \bigskip

   In order to establish an upper bound on the measure of $B$, first recall that $\calH_Y^n B = \lim_k \vol{n} B_k$, and $\vol{n} B_k \leq A \vol{n} \tB_k$ by assumption $(h)$. Now, by definition of Hausdorff measure, there exists 
   a finite set of points $\{\bx_i\}_{i \in I}$ of $X_0$ and radii $\{\eta_i\}_{i \in I}$ such that the $B(\bx_i,\eta_i)$ cover $\tS$, and
   \[ \sum_{i \in I} \omega_{n-1} \eta_i^{n-1} \leq 2 \calH^{n-1}_X \tS. \] 
   Let $\eta = \min_{i \in I} \eta_i$. For $k$ large enough, the $\bx_i$ can be lifted to points $x_{i,k} \in M_k$ such that
   \[ \tS_k \subset \bigcup_{i \in I} B(x_{i,k},2 \eta_i). \]
   If $U_k = \tB_k \cup \bigcup_{i \in I} B(x_{i,k},\eta_i)$, one has $\bound U_k \subset \bigcup_{i \in I} S(x_{i,k},\eta_i)$, whence
   \[ \vol{n-1} \bound U \leq \sum_{i \in I} c(n) \eta_i^{n-1} \leq c(n) \calH_X^{n-1} \tS. \]
   Making use of assumption $(c)$ and the fact that $\tB_k \subset U_k$, one has
   \[ \vol{n} \tB_k \leq c(n) \calI_0 \left( \calH_X^n \tS \right)^\frac{n}{n-1}. \]   
   Finally since $\calH_X^{n-1} \tS \leq \calH_Y^{n-1} S$, one finds
   \begin{equation}
    \calH_Y^n B(\by_0,r) \leq \calI(n,\calI_0,A) \left( \calH_Y^{n-1} S(\by_0,r) \right)^\frac{n}{n-1}. \label{majoration_I}
   \end{equation}

   \bigskip

   \item[\bf(vii)] Conclusion. Define $f(r) := \calH_Y^n B(\by_0,r)$. Inequality $(\ref{majoration_I})$ becomes 
   \[ f(r) \leq \calI(n,\calI_0,A) \left( f'(r) \right)^\frac{n}{n-1} \]
   which can be integrated into $f(r) \geq v(n,\calI_0,A) r^n$. For $k$ large enough one thus has
   \[ \vol{h} B(y_{0,k},r) \geq v(n,\calI_0,A) r^n \]
   for $\eta \leq r \leq 1$. That the inequality still holds for $0 < r < \eta$ is then a direct consequence of assumption $(e)$. 
   
  \end{itemize}

 \end{proof}
 
 For later use we write the lemma above under a slightly different form. This reformulation, obtained by mere scale manipulations, says that for a given $\delta$, a $\delta$-non contracting map won't collapse balls of
 radius $\Lambda \delta$, for some $\Lambda$ large enough, depending on the hypothesis. 
 
 \begin{corollary}[Reformulation] \label{reformulation}
  There exist $v(n,v_0,A)>0$ and $\Lambda_0(n,v_0,A,\eta)>0$ such that the following holds.
  Let $(M,g,x_0)$ be an $n$-dimensional Riemannian manifold such that for some $\Lambda \geq \Lambda_0$,
  \[ \begin{array}{ll}
   (a) & B(x_0, 2 \Lambda \delta) \text{ is relatively compact in $M$,} \\
   (b) & \ricci g \geq -\frac{1}{\Lambda^2 \delta^2 \eta^2} \text{ on $B(x_0,2 \Lambda \delta)$, } \\
   (c) & \vol{n} B(x_0, r) \geq v_0 r^n \text{ for } 0<r<2 \Lambda \delta. 
  \end{array} \]
  Let $(N,h,y_0)$ be an $n$-dimensional Riemannian manifold such that 
  \[ \begin{array}{ll}
   (d) & B(y_0, \Lambda \delta) \text{ is relatively compact in $N$,} \\
   (e) & \ricci h \geq -\frac{1}{\Lambda^2 \delta^2 \eta^2} \text{ on } B(y_0,\Lambda \delta),  \\
   (f) & \vol{h} B(y,r) \geq \frac{\omega_n}{2} r^n \text{ for } y \in N, 0 < r < \eta \delta.
  \end{array} \]
  Let finally $\psi: (M,x_0) \rightarrow (N,y_0)$ be an embedding such that 
  \[ \begin{array}{ll}
   (g) & d_N(\psi(x),\psi(x')) \geq d_M(x,x') - \delta \text{ for every $x,x' \in M$,} \\
   (h) & \vol{h} \psi(U) \leq A \vol{g} U \text{ for any $U \subset M$, } \\
  \end{array} \] then
  \[ \vol{h} B(y_0,r) \geq v r^n \text{ for } 0 < r \leq \Lambda \delta, \]
  moreover $d_N(\psi(x),\psi(x')) \leq \frac{c(n) \Lambda \delta A}{\eta^{n-1}} \left( d_M(x,x') + \delta \right)$. 
 \end{corollary}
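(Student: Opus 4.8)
The plan is to derive Corollary~\ref{reformulation} from Lemma~\ref{technical_lemma} by rescaling the two metrics so that the ball scale $\Lambda\delta$ plays the role of the unit scale. Put $\lambda:=\Lambda\delta$, set $\bar g:=\lambda^{-2}g$ on $M$ and $\bar h:=\lambda^{-2}h$ on $N$, keep the same embedding $\psi$, and let $\bar\delta:=\delta/\lambda=1/\Lambda$. I would first record how the hypotheses transform: a metric ball of radius $\rho$ for $g$ (resp.\ $h$) is a ball of radius $\rho/\lambda$ for $\bar g$ (resp.\ $\bar h$), so (a), (d) say that $B_{\bar g}(x_0,2)$ and $B_{\bar h}(y_0,1)$ are relatively compact; (b), (e) become $\ricci \bar g\ge -1/\eta^2$ on $B_{\bar g}(x_0,2)$ and $\ricci \bar h\ge -1/\eta^2$ on $B_{\bar h}(y_0,1)$ — note $\eta$ is unchanged, which is exactly why the power $(\Lambda\delta\eta)^{-2}$ appears in (b), (e); (f) becomes $\vol{\bar h} B_{\bar h}(y,r)\ge\frac{\omega_n}{2}r^n$ for $0<r<\eta\bar\delta$; (g) becomes $d_{\bar h}(\psi(x),\psi(x'))\ge d_{\bar g}(x,x')-\bar\delta$ on $M$; and (h) becomes $\vol{\bar h}\psi(U)\le A\,\vol{\bar g}U$. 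Restricting attention to $B_{\bar g}(x_0,1)\subset B_{\bar g}(x_0,2)$, these coincide with hypotheses (a), (b), (d), (e), (f), (g), (h) of Lemma~\ref{technical_lemma}, the parameters $n$, $A$, $\eta$ being unchanged and the non-contraction scale now equal to $\bar\delta=1/\Lambda$.

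The one genuinely non-formal step is that Lemma~\ref{technical_lemma} asks, as its hypothesis (c), for an isoperimetric inequality $\vol{n}U\le\calI_0(\vol{n-1}\partial U)^{n/(n-1)}$ on $B_{\bar g}(x_0,1)$, whereas the corollary only supplies (after rescaling) the weaker scale-invariant volume bound $\vol{\bar g} B_{\bar g}(x_0,r)\ge v_0 r^n$ for $0<r<2$, together with the lower Ricci bound on $B_{\bar g}(x_0,2)$. I would bridge this with the classical fact that a ball which is non-collapsed at every scale and carries a lower Ricci bound satisfies a Dirichlet-type isoperimetric (equivalently, $L^1$-Sobolev) inequality on the concentric ball of half the radius. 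The essential subtlety — and what I expect to be the real content of the argument — is that the resulting constant $\calI_0$ must depend only on $n$ and $v_0$, and not on $\eta$; this is what keeps the final $v$ free of $\eta$, as the statement requires. In the situations where the corollary is invoked the source ball carries a lower Ricci bound which is already of order $-1$ at the scale $2\Lambda\delta$, so that after rescaling one in fact has $\ricci \bar g\ge -c(n)$ on $B_{\bar g}(x_0,2)$ and the $\eta$-free control of $\calI_0$ is the standard isoperimetric estimate for non-collapsed balls with a fixed lower Ricci bound.

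Granting $\calI_0=\calI_0(n,v_0)$, all hypotheses of Lemma~\ref{technical_lemma} hold as soon as its smallness condition on the non-contraction scale is met, namely $\bar\delta=1/\Lambda\le\delta(n,\calI_0,A,\eta)$; this is precisely the requirement $\Lambda\ge\Lambda_0$ with $\Lambda_0(n,v_0,A,\eta):=\delta(n,\calI_0(n,v_0),A,\eta)^{-1}$. The lemma then gives $\vol{\bar h} B_{\bar h}(y_0,r)\ge v(n,\calI_0,A)\,r^n$ for $0<r\le 1$ along with the corresponding Lipschitz-type bound for $\psi$. Undoing the scaling — radii and distances multiplied by $\lambda=\Lambda\delta$, $n$-volumes by $\lambda^n$ — converts these into $\vol{h} B(y_0,r)\ge v\,r^n$ for $0<r\le\Lambda\delta$ and the asserted distance estimate for $\psi$, with $v:=v(n,\calI_0(n,v_0),A)$, a function of $n$, $v_0$, $A$ only. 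Apart from the isoperimetric input above, the whole proof is bookkeeping of how the three scales — ball size $\Lambda\delta$, Ricci/regularity scale of order $\eta\delta$, non-contraction scale $\delta$ — rescale to the scales $1$, $\eta\bar\delta$, $\bar\delta$ appearing in Lemma~\ref{technical_lemma}.
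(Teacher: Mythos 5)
Your rescaling $\bar g=(\Lambda\delta)^{-2}g$, $\bar h=(\Lambda\delta)^{-2}h$, $\bar\delta=1/\Lambda$ is exactly the ``mere scale manipulations'' the paper has in mind (it offers no other proof of Corollary~\ref{reformulation}), and your bookkeeping of how (a), (b), (d)--(h) transform and of how the smallness condition on the non-contraction parameter of Lemma~\ref{technical_lemma} becomes $\Lambda\geq\Lambda_0(n,v_0,A,\eta)$ is correct. The substantive point is the one you isolate yourself: the lemma assumes the isoperimetric inequality (its hypothesis $(c)$) with a constant $\calI_0$ on which the final $v$ depends, whereas the corollary only supplies the Ricci bound $(b)$ and non-collapsing of balls centred at $x_0$. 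The standard bridge (Bishop--Gromov to propagate the centre non-collapsing to every point of $B(x_0,1)$, then a Croke/Cheeger--Colding type local isoperimetric estimate) yields $\calI_0=\calI_0(n,v_0)$ only when the Ricci lower bound is of order $-1$ at the scale of the ball; hypothesis $(b)$ as written only gives $\ricci \bar g\geq-\eta^{-2}$ after rescaling, and from that alone the isoperimetric constant, hence $v$, degenerates with $\eta$ (a tentacle of thickness of order $\eta$ attached near $x_0$ is compatible with $(b)$ and $(c)$ but destroys any $\eta$-free isoperimetric inequality on $B(x_0,1)$). So your argument proves the corollary under the stronger hypothesis $\ricci g\geq-c(n)(\Lambda\delta)^{-2}$, which is indeed what holds where the corollary is invoked (in the proof of Proposition~\ref{main_I} the source metric is $\tg(0)$ with $\ricci\tg(0)\geq-t$, far better than $(b)$ requires), but it does not prove the statement under the literal hypothesis $(b)$, and it is doubtful that the statement in that generality can be reached by this route at all, since, as the remark following Lemma~\ref{technical_lemma} stresses, the whole point is that $v$ must not depend on $\eta$. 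This is largely a defect of the paper's formulation rather than of your reasoning, but in a self-contained write-up you should say explicitly that you are strengthening $(b)$ (or simply adding the isoperimetric hypothesis) rather than deducing the lemma's hypothesis $(c)$ from the corollary's hypotheses as stated.

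Two smaller points. First, when you invoke the ``classical fact'', note that $(c)$ only concerns balls centred at $x_0$, so the Bishop--Gromov step transferring non-collapsing to all points of $B(x_0,1)$ (using the relative compactness of $B(x_0,2\Lambda\delta)$ and the Ricci bound there) should appear explicitly before the isoperimetric estimate is quoted. Second, undoing the scaling in the distance conclusion of the lemma gives $d_N(\psi(x),\psi(x'))\leq\frac{c(n)A}{\eta^{n-1}}\left(d_M(x,x')+\delta\right)$, without the factor $\Lambda\delta$ that appears in the corollary; this only matches the stated bound when $\Lambda\delta\geq1$ (the extra factor is presumably a typo in the paper), so it is better to record what the rescaling actually yields than to assert that it coincides with the printed estimate.
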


\section{Proof of proposition $\ref{main_proposition}$}

In this section we prove proposition $\ref{main_proposition}$, which we decomposes into two propositions. We prove first that a flow with an a priori $\frac{K}{t}$ curvature bound is non-collapsed for a time 
$\epsilon_{\ref{main_I}}^2$ and displays a pinching of the Ricci curvature of the form $-\frac{1}{t}$. While $\epsilon_{\ref{main_I}}$ depends on $K$, it is essential that the non-collapsing constant and the value of the Ricci 
lower bound does not. 

 \begin{proposition} \label{main_I}
 
  There exist $v(v_0)$ and $\epsilon_{\ref{main_I}}(v_0,K)$ such that the following holds.
  Let $g(t)$ be a Ricci flow on $(M^3,x_0) \times [0,T[$ such that $B_t(x_0,1)$ is relatively compact in $M$ for all $0 \leq t<T$. Suppose moreover that
  
  \[ \begin{array}{ll}
      (a) & \ricci g(0) \geq -1 \text{ on $B_0(x_0,1)$, } \\
      (b) & \vol{} B_0(x_0,r) \geq v_0 r^3 \text{ for } 0 < r \leq 1, 
     \end{array} \]
  as well as  
  \[ \begin{array}{ll}
      (c) & \normes{ \riem g(x,t) } \leq \frac{K}{t}, \text{ for $0 \leq t \leq T$, $x \in B_t(x_0,1)$, } \\
      (d) & \inj{g(t)} x \geq \sqrt{\frac{t}{K}} \text{ for } x \in B_t(x_0,1).
     \end{array} \]
  Then   
  \begin{align*}
   & \vol{} B_t(x_0, r) \geq v r^3, \\
   & \ricci g(x,t) \geq -\frac{1}{t},
  \end{align*}
  for $0 < r \leq \sqrt{t}$, $x\in B_t(x_0,\sqrt{t})$ and $0 \leq t \leq \min(\epsilon_{\ref{main_I}}^2,T)$. 

 \end{proposition}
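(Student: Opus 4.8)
The plan is to follow the scheme outlined in the introduction: from the a priori geometry control at scale $\sqrt{t/K}$ one first improves the Ricci lower bound on $g(t)$, then shows that distances and volumes are almost preserved by the flow, and finally feeds the identity map between the initial and the time-$t$ metric into the metric lemma (Corollary \ref{reformulation}) to produce a uniform non-collapsing at scale $\Lambda\sqrt{t}$. First I would fix the constants, in the following order, so as to make the dependencies consistent: a scale $\eta=\eta(K)$ chosen so that the controlled-geometry hypotheses $(c)$, $(d)$ of Proposition \ref{main_I} force $\vol{g(t)}B_t(y,r)\ge\tfrac{\omega_3}{2}r^3$ whenever $0<r<\eta\cdot\tfrac{40}{3}\sqrt{Kt}$ and $y$ lies in the controlled region (controlled geometry at scale $\sqrt{t/K}$ gives such a bound at a dimensional fraction of $\sqrt{t/K}$); then $\Lambda:=\Lambda_0(3,v(v_0),2,\eta)$ from Corollary \ref{reformulation}, depending only on $v_0,K$, enlarged if necessary so that $\tfrac{40}{3}\Lambda\sqrt{K}\ge 1$; then a small $\alpha=\alpha(v_0,K)\le 1$ with $\alpha/t\le(\Lambda^2\delta^2\eta^2)^{-1}$ for $\delta:=\tfrac{40}{3}\sqrt{Kt}$ (an inequality between explicit powers of $\Lambda,K$, so admissible); and finally $\epsilon_{\ref{main_I}}=\epsilon_{\ref{main_I}}(v_0,K)$, the minimum of $\epsilon_1(\alpha,K)$ from Corollary \ref{ricci_local} and of a few explicit smallness thresholds (of the form $c/(\Lambda^2 K)$, etc.) ensuring the regions below nest.

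The body then has four steps, each a direct application of a result proved earlier in the paper. (1) \emph{Ricci pinching:} after a parabolic rescaling, Corollary \ref{ricci_local} gives $\ricci g(x,t)\ge -\alpha/t$ on a ball $B_t(x_0,r_\alpha)$ with $r_\alpha=r_\alpha(v_0,K)>\sqrt{t}$ for $0<t\le\epsilon_{\ref{main_I}}^2$; since $\alpha\le 1$ this already yields $\ricci g(x,t)\ge -1/t$ on $B_t(x_0,\sqrt{t})$ — the second conclusion — and it also provides the Ricci lower bound needed for hypothesis $(e)$ of Corollary \ref{reformulation}. (2) \emph{Volume dilatation:} from $\ricci g(0)\ge -1$ one has $\scal g(0)\ge -3$ on $B_0(x_0,1)$, so Proposition \ref{proposition_minoration_scalaire} (with a fixed margin) gives $\scal g(s)\ge -C(3)$ on $B_s(x_0,1-C\sqrt{Ks})$ for $0<s\le t$, and integrating $\tfrac{d}{ds}\,\dvol{g(s)}=-\scal g(s)\,\dvol{g(s)}$ yields $\dvol{g(t)}\le 2\,\dvol{g(0)}$, hence $\vol{g(t)}U\le 2\,\vol{g(0)}U$, on the relevant region once $t\le\epsilon_{\ref{main_I}}^2$. (3) \emph{Isoperimetry at time $0$:} from $(a)$, $(b)$ and Bishop–Gromov, any $x$ with $d_0(x_0,x)\le\tfrac1{100}$ satisfies $\vol{g(0)}B_0(x,r)\ge v_0'(v_0)\,r^3$ for $0<r\le\tfrac12$, which combined with $\ricci g(0)\ge -1$ gives a local isoperimetric inequality with constant depending only on $v_0$; and $d_0(x_0,x)\le\sqrt{t}+C\sqrt{Kt}\le\tfrac1{100}$ for $x\in B_t(x_0,\sqrt{t})$ by Corollary \ref{corollaire_distorsion_distances}. (4) \emph{The metric lemma:} for fixed $0<t\le\epsilon_{\ref{main_I}}^2$ and $x\in B_t(x_0,\sqrt{t})$, apply Corollary \ref{reformulation} with $\psi=\mathrm{id}$, $\delta=\tfrac{40}{3}\sqrt{Kt}$, $A=2$ and the chosen $\Lambda$: hypothesis $(g)$ is the distance-contraction estimate of Corollary \ref{corollaire_distorsion_distances}, $(h)$ is step (2), $(b)$–$(c)$ are step (3), $(e)$ is step (1) together with the choice of $\alpha$, and $(f)$ is the controlled-geometry hypotheses $(c)$, $(d)$ together with the choice of $\eta$. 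The conclusion, $\vol{g(t)}B_t(x,r)\ge v(v_0)\,r^3$ for $0<r\le\Lambda\delta$, gives the first assertion for all $0<r\le\sqrt{t}$ because $\Lambda\delta=\tfrac{40}{3}\Lambda\sqrt{Kt}\ge\sqrt{t}$.

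I expect the main obstacle to lie not in any of these steps individually but in the \emph{bookkeeping of the regions} — precisely, in verifying that for $x\in B_t(x_0,\sqrt{t})$ the $g(0)$-ball $B_0(x,2\Lambda\delta)$ (the source of the metric lemma) sits inside the region $B_t(x_0,1)$ on which the hypotheses $(c)$, $(d)$ on $g(t)$, hence hypotheses $(e)$, $(f)$ of Corollary \ref{reformulation}, are available. Corollary \ref{corollaire_distorsion_distances} controls only how much distances can \emph{contract} under the flow; it gives $d_0\le d_t+C\sqrt{Kt}$, i.e.\ that $g(t)$-balls lie inside slightly larger $g(0)$-balls, which is the opposite of what is needed. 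Under the sole bound $\normes{\riem g}\le K/t$ there is, near $t=0$, no control on how much distances may \emph{expand} from time $0$ to time $t$ — a small, highly curved portion of $g(0)$, whose curvature is not controlled, can swell considerably before the $K/t$ bound becomes effective. This inclusion must therefore be obtained separately, presumably by a continuity argument in $t$: run the above on the maximal sub-interval of $[0,\min(\epsilon_{\ref{main_I}}^2,T))$ on which all conclusions already hold, use the Ricci lower bound $-\alpha/s$ and the volume lower bound available there to bound the forward distance distortion on the (shrinking) good region, and conclude that the interval cannot end before $\min(\epsilon_{\ref{main_I}}^2,T)$.
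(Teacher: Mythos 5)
Your proposal is, in substance, the paper's own proof: after the parabolic rescaling $\tg(s)=\frac1t g(ts)$ the paper runs exactly your steps (1), (2) and (4) — Corollary \ref{ricci_local} with a small $\alpha\sim K/\Lambda^2$ to get both the conclusion $\ricci g\ge-\frac1t$ and hypothesis $(e)$, Proposition \ref{proposition_minoration_scalaire} to get $\scal\ge -O(t)$ and hence the factor-$2$ volume dilatation ($A=2$), the contraction estimate of Corollary \ref{corollaire_distorsion_distances} for $(g)$ with $\delta\sim\sqrt K$, the regularity scale $r_{reg}\sim K^{-1/2}$ from $(c)$--$(d)$ for $(f)$ with $\eta\sim K^{-1}$, and then Corollary \ref{reformulation} applied to the identity map with $\Lambda=\max(\Lambda_0,100\sqrt K)$ so that $\Lambda\delta\ge1$ in rescaled units. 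The only structural difference is that you apply the metric lemma at every $x\in B_t(x_0,\sqrt t)$, whereas the statement only claims the volume bound for balls centered at $x_0$; the paper works exclusively at $x_0$ (your step (3) is then unnecessary, since hypothesis $(c)$ of Corollary \ref{reformulation} at the center $x_0$ is just the rescaled hypothesis $(b)$).

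Concerning the obstacle you flag at the end: the paper uses no continuity argument, and most of the worry dissolves once one centers at $x_0$ and keeps track of which metric each ball is taken in. The source and target of Corollary \ref{reformulation} are taken to be the \emph{same} time-zero ball $B_{\tg(0)}(x_0,3\Lambda\delta)$, so no ball $B_0(x,2\Lambda\delta)$ around a moving center has to be located inside $B_t(x_0,1)$. Hypothesis $(d)$ needs $B_{\tg(1)}(x_0,\Lambda\delta)$ to sit inside that time-zero ball, and this is precisely the direction the contraction estimate does give ($d_0\le d_1+\delta$), contrary to what you assert. Hypothesis $(e)$ concerns a ball of the \emph{target} metric $\tg(1)$, and Corollary \ref{ricci_local} delivers $\ricci\tg(s)\ge-\alpha/s$ directly on time-$s$ balls $B_{\tg(s)}(x_0,\cdot)$, so no forward-distortion control is needed there either. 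Also, the repair you sketch could not work as stated: integrating the lower bound $\ricci\ge-\alpha/s$ only gives $d_s\le d_\sigma\,(s/\sigma)^{\alpha}$, and $\alpha/s$ is not integrable at $s=0$, so no uniform bound on expansion from time $0$ follows this way. The one residual point where your concern is legitimate is hypothesis $(f)$: the $\eta\delta$-regularity of $\tg(1)$ is known on the time-one ball $B_{\tg(1)}(x_0,\frac1{\sqrt t})$, while the lemma invokes it at points selected by their $\tg(0)$-distance to $x_0$; the paper passes over this time-$0$/time-$1$ mismatch without comment. It is harmless where Proposition \ref{main_I} is actually used (in the proof of Proposition \ref{main_proposition}), because there the curvature and injectivity-radius bounds hold on a fixed, time-independent region $U$ containing $B_0(x,2)$ as well as all the balls $B_s(x,1)$, so the regularity at time $1$ is available at every point that matters. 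So your instinct identifies a real imprecision in the proposition's phrasing, but the cure is this remark about where the hypotheses hold, not the maximal-interval argument you propose.
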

 
  \begin{proof}
 
  \begin{itemize}
  
   \item[\bf(i)] Let $0<t<T$. To establish estimates on $g$ at time $t$, we consider the rescaled flow $\tg(s) = \frac{1}{t}g(t s)$ defined on $M$ for $0 \leq s \leq 1$. The flow $\tg(s)$ satisfies the following properties at 
   initial time:     
    \begin{equation} \label{hyp_rescale_0}
     \begin{array}{cc}
      \ricci \tg(x,0) \geq -t, & \vol{} B_{\tg(0)}(x_0,r) \geq v_0 r^3, 
     \end{array}      
    \end{equation}
    for all $x \in B_{\tg(0)}(x_0,\frac{1}{\sqrt{t}})$, $0< r \leq \frac{1}{\sqrt{t}}$, as well as    
    \begin{equation} \label{hyp_rescale_1}
     \begin{array}{cc} \normes{\riem \tg(x,s)} \leq \frac{K}{s}, & \inj{\tg(s)} x \geq \sqrt{\frac{s}{K}}, \end{array}
    \end{equation}
    for $0 < s \leq 1$, $x \in B_{\tg(s)}(x_0, \frac{1}{\sqrt{t}})$.

    \bigskip
    
    Let us note first that $(\ref{hyp_rescale_1})$ implies the existence of a regularity scale $r_{reg}$ at time $s=1$, at which balls have at least half the Euclidian volume. Actually $r_{reg}:=\frac{1}{100 \sqrt{K}}$ works, i.e.
    \begin{equation}
     \vol{} B_{\tg(1)}(x,r) \geq \frac{2 \pi}{3} r^3 \text{ for } 0 < r < \frac{1}{100 \sqrt{K}}, \label{regularite}
    \end{equation}
    for every $x \in B_{\tg(1)}(x_0, \frac{1}{\sqrt{t}})$, while on the other hand we have, by corollary $\ref{corollaire_distorsion_distances}$    
    \begin{equation} \label{distortion}
     d_1(x,x') \geq d_0(x,x') - 20 \sqrt{K}
    \end{equation}
    for all $x,x' \in B_{\tg(0)}(x_0, \frac{1}{3 \sqrt{t}})$.
    
    \bigskip
    
    We now specify the parameters with which we wish to apply corollary $\ref{reformulation}$, in order to compare the metrics $\tg(0)$ and $\tg(1)$. The identity map between both metrics has distance contraction controlled by
    \[ \delta = 20 \sqrt{K}, \]
    while $\tg(1)$ is regular at scale $r_{reg} = \frac{1}{100 \sqrt{K}}$, whence the choice of  
    \[ \eta = \frac{r_{reg}}{\delta} = \frac{1}{2000 K}. \]
    so as to have $r_{reg}=\delta \eta$. The corollary then yields a $\Lambda_0(3,v_0,2,\eta)$ and we define $\Lambda = \max(\Lambda_0, 100\sqrt{K})$ for a reason which will become clear later. Thus if $t$ is
    small enough so as to have    
   \begin{equation}\label{condition_t1}
    \frac{1}{3 \sqrt{t}} \geq 3 \Lambda \delta,
   \end{equation}
   then $(\ref{regularite})$ and $(\ref{distortion})$ hold for all $x,x' \in B_{\tg(1)}(x_0, 3 \Lambda \delta)$ guarantying thereby that the identity map satisfies hypothesis $(f)$ et $(g)$ of the corollary.

   \bigskip
    
   \item[\bf(ii)] We now need to establish the following lower bound on the Ricci curvature of $\tg(1)$, in order to match hypothesis $(e)$   
   \begin{equation}   
    \begin{array}{cc} \ricci \tg(x,1) \geq -\frac{1}{\Lambda^2 \eta^2 \delta^2} = - \frac{10^4 K}{\Lambda^2} & \text{ for } x \in B_{\tg(1)}(x_0, 3 \Lambda \delta) \end{array}. \label{hypothese_e}
   \end{equation}
   To do this we apply corollary $\ref{ricci_local}$ choosing $\alpha = \frac{10^4 K}{\Lambda^2}$ and $a = \frac{2}{3 \sqrt{t}}$. We get $\epsilon_1=\epsilon_1(\alpha,K)$ such that if   
   \begin{equation} \label{condition_t2}
    \sqrt{t} < \frac{\epsilon_1}{3},
   \end{equation}
   then $B_{\tg(0)}(x_0,a+\frac{1}{\epsilon_1}) \subset B_{\tg(0)}(x_0, \frac{1}{\sqrt{t}})$, so with $(\ref{hyp_rescale_0})$ one finds   
   \[ \ricci \tg(x,s) \geq -\frac{\alpha}{s} \]
   for every $x \in B_{\tg(s)}(x_0,\frac{2}{3 \sqrt{t}}-20 \sqrt{K})$, $0 \leq s \leq 1$. Thanks to $(\ref{condition_t1})$, this implies $(\ref{hypothese_e})$.
   
   \bigskip
   
   \item[\bf(iii)] Finally we control the dilatation of regions. Let us consider $U \subset B_{\tg(0)}(x_0, 3 \Lambda \delta)$, and $x_1 \in U$. We apply $\ref{proposition_minoration_scalaire}$, with 
   $a=A=\frac{1}{4 \sqrt{t}}$. Then $B_{\tg(0)}(x_1, a+A) \subset B_{\tg(0)}(x_0, \frac{1}{\sqrt{t}})$ by $(\ref{condition_t1})$. Since $\scal \tg(0) \geq - 3 t$ (by $(\ref{hyp_rescale_0})$) on this ball,
   we get, for $0 \leq s \leq 1$, $\scal \tg(s) \geq -1600 t$ on $B_{\tg(s)}(x_1, \frac{1}{4 \sqrt{t}} - 20 \sqrt{K s})$. In particular (recall $(\ref{condition_t1})$), 
   \[ \scal \tg(x_1,s) \geq -1600 t \]
   for all $x_1 \in U$, $0 \leq s \leq 1$. Thus we have in particular
   \[ \vol{\tg(1)} U \leq e^{1600 t} \vol{\tg(0)} U \leq 2 \vol{\tg(0)} U \]
   assuming for example
   \begin{equation} \label{condition_t3}
    t < \frac{1}{3200}.
   \end{equation}
   
   \bigskip
   
   \item[\bf(iv)] The $\epsilon_{\ref{main_I}}$ of the proposition is obtained by collecting the different upper bounds $(\ref{condition_t1})$, $(\ref{condition_t2})$ and $(\ref{condition_t3})$ on $\sqrt{t}$ which have appeared 
   in the course of the argument. Then if $t < \epsilon_{\ref{main_I}}^2$, corollary $\ref{reformulation}$ applies to the map
   \[ \begin{array}{cc} \psi : & (M,\tg(0),x_0) \rightarrow (N,\tg(1),x_0) \end{array} \]
   with $M=N=B_{\tg(0)}(x_0, 3 \Lambda \delta)$, and $\psi \equiv \id$. One finds that
   \[ \vol{\tg(1)} B_{\tg(1)}(x_0,r) \geq v(3,v_0,2) r^3, \]
   for $0 < r < \Lambda \delta$, or, coming back to $g$, 
   \[ \vol{g(t)} B_t(x_0,r) \geq v r^3 \]
   for any $0 < r < \Lambda \delta \sqrt{t}$, so (by the choice we made for $\Lambda$) in particular for $0 < r < \sqrt{t}$, and for all $0< t < \epsilon_{\ref{main_I}}^2$. Finally, since we also have $\frac{10^4 K}{\Lambda^2} 
   \leq 1$, $(\ref{hypothese_e})$ implies, when coming back to the metric $g$, 
   \[ \ricci g(x,t) \geq -\frac{1}{t} \]
   for $x \in B_t(x, \sqrt{t})$ which is the second conclusion of the proposition.
    
  \end{itemize}

 \end{proof}
 
 The next proposition is classical and formalizes the fact that in dimension $3$, a metric evolving by the Ricci flow becomes collapsed around points at which the curvature becomes high. In its local version, it is due to Miles 
 Simon (see for example thm 2.1 in \cite{Simon_2013}). We nevertheless include the proof of the precise version we will use.
 
 \begin{proposition} \label{main_II}
 
  There exists $K(v)$ such that if $g(t)$ is a (not necessarily complete) Ricci flow on $(M^3,x_0) \times [0,1]$ such that 
  \begin{align*}
   & B_t(x_0,1) \text{ is relatively compact in } M, \\
   & \vol{t} B_t(x,r) \geq v r^n \text{ whenever } B_t(x,r) \subset B_t(x_0,1) \text{ and } 0 < r \leq \sqrt{t},
  \intertext{for all $0 \leq t \leq 1$, then}
   & \normes{ \riem g(x,t) } \leq \frac{K}{t}
  \end{align*}
  for $x \in B_t(x_0, \frac{1}{2})$ and $0 \leq t \leq 1$.
  
 \end{proposition}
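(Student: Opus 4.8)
The plan is to argue by contradiction, using a point–picking argument together with Perelman's classification of three–dimensional $\kappa$–solutions. Suppose the statement fails: then there is a sequence of Ricci flows $g_k(t)$ on $(M_k^3,x_{0,k})\times[0,1]$ satisfying the hypotheses with a fixed constant $v$, together with points $x_k\in B_{t_k}(x_{0,k},\frac12)$ and times $t_k\in(0,1]$ with $|\riem g_k(x_k,t_k)|>K_k/t_k$ and $K_k\to\infty$. First I would run Perelman's point–selection procedure (as in \cite{Perelman_2002}, 10.1, or \cite{Kleiner_Lott}): after shrinking the region slightly one obtains points $(\bx_k,\bt_k)$ with $\bx_k\in B_{\bt_k}(x_{0,k},\frac34)$, $Q_k:=|\riem g_k(\bx_k,\bt_k)|\to\infty$, $\bt_kQ_k\to\infty$, and such that $|\riem g_k|\le 4Q_k$ on a parabolic neighbourhood of $(\bx_k,\bt_k)$ whose size, measured after parabolic rescaling by $Q_k$, tends to infinity. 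Keeping the selected point far enough inside $B_t(x_{0,k},1)$ that both the curvature control and the non–collapsing hypothesis can be used there is the standard but slightly delicate part of this step.

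Next I would parabolically rescale, setting $\tilde g_k(s)=Q_k\,g_k(\bt_k+Q_k^{-1}s)$ for $s\in[-S_k,0]$ with $S_k\to\infty$ (using $\bt_kQ_k\to\infty$). These satisfy $|\riem \tilde g_k(\bx_k,0)|=1$ and a uniform curvature bound on parabolic balls of radii tending to infinity; moreover the non–collapsing hypothesis, which holds at every positive time and at every scale $\le\sqrt t$ with the fixed constant $v$, rescales to $\vol{\tilde g_k(s)}B(y,\rho)\ge v\rho^3$ for all $\rho$ up to a radius tending to $\infty$ and all $s\in[-S_k,0]$. Together with the curvature bound this gives a uniform injectivity–radius lower bound, so by Hamilton's compactness theorem for Ricci flows one extracts (a subsequence converging to) a complete ancient Ricci flow $(M_\infty^3,g_\infty(s),x_\infty)$, $s\in(-\infty,0]$, with $|\riem g_\infty|\le 4$ everywhere, $|\riem g_\infty(x_\infty,0)|=1$ (so $g_\infty$ is not flat), and $\vol{g_\infty(s)}B(p,\rho)\ge v\rho^3$ for all $p$, all $\rho>0$ and all $s\le0$; in particular $g_\infty$ is $\kappa(v)$–non–collapsed on all scales and has positive asymptotic volume ratio, $\mathrm{AVR}\ge v/\omega_3$.

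To identify $g_\infty$ as a $\kappa$–solution it remains to check that it has non–negative curvature operator, and this is where dimension three enters, via Hamilton–Ivey pinching. For fixed $T>0$ the restriction $g_\infty|_{[-T,0]}$ is a complete Ricci flow with bounded curvature, so $\riem g_\infty(\cdot,-T)\ge -4\,g_\infty(-T)$, and the Hamilton–Ivey inequality (the global version behind Proposition~\ref{ricci_pinching}) gives, at any point of $M_\infty$ at time $0$ where the smallest eigenvalue $\nu$ of the curvature operator is negative, a bound of the shape $\scal g_\infty\ge|\nu|\bigl(\ln|\nu|+\ln(1+T)-C\bigr)$; since $|\nu|$ and $\scal g_\infty$ are bounded by a constant independent of $T$, letting $T\to\infty$ forces $\nu\ge0$. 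Hence $\riem g_\infty\ge0$ and $g_\infty$ is a non–flat three–dimensional $\kappa$–solution. By Perelman's theorem (\cite{Perelman_2002}, 11; \cite{Kleiner_Lott}, I.11), such a solution has zero asymptotic volume ratio, contradicting $\mathrm{AVR}\ge v/\omega_3>0$. This contradiction yields the required constant $K(v)$.

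The main obstacle is the bookkeeping in the point–selection step: one must produce a blow–up point that simultaneously has its curvature controlled on a large rescaled parabolic neighbourhood and stays well inside $B_t(x_{0,k},1)$, so that the non–collapsing hypothesis and the curvature bound both survive passage to the ancient limit; once the limit $g_\infty$ is in hand, the Hamilton–Ivey argument and Perelman's classification are essentially routine.
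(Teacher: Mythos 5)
Your proposal is correct and follows essentially the same route as the paper: contradiction, Perelman-style point-picking, parabolic rescaling, Hamilton compactness using the rescaled non-collapsing and curvature bounds (plus Cheeger--Gromov--Taylor for the injectivity radius), then nonnegativity of curvature for the ancient three-dimensional limit and Perelman's vanishing-AVR result to contradict the normalization at the blow-up point. The only difference is cosmetic: where the paper simply cites \cite{Chow_Lu_Ni} and \cite{Chen_2013} for the nonnegative curvature of the ancient limit, you sketch the underlying Hamilton--Ivey argument yourself.
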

 
 A key ingredient when proving this kind of local result is a point-picking technique introduced by Perelman (see \cite{Perelman_2002}, 10.1 or \cite{Kleiner_Lott}, 31.1 and 32.2), which we use under the following form
 
 \begin{lemma} \label{point_picking}
  Let $g(t)$ be a (not necessarily complete) Ricci flow on $M \times [0,1]$ such that $B_t(x_0,1)$ is relatively compact in $M$ for $0 \leq t \leq 1$. If the space-time point $(x,t) \in M \times [0,1]$ is such that   
  \begin{align*}
   & x \in B_t(x_0,\frac{1}{2}), \\
   & \normes{\riem g(x,t)} = \frac{Q}{t},
  \intertext{then for all $A \leq \frac{\sqrt{Q}}{8}$, there exists another space-time point $(\bx,\bt)$ with the following properties}
   & \bt \leq t, \\
   & d_{\bt}(x_0,\bx) < d_t(x_0,x) + \frac{2A}{\sqrt{Q}}, \\
   & \bQ \geq Q, 
  \intertext{where $\bQ$ is defined by $\normes{\riem g(\bx,\bt)}=\frac{\bQ}{\bt}$, and}
   & \normes{ \riem g(x',t')} \leq \frac{4 \bQ}{t'}, 
  \end{align*}
  for all $(x',t')$ such that $0 \leq t' \leq \bt$ and $d_{t'}(x_0,x') < d_{\bt} (x_0,\bx) + A \sqrt{\frac{\bt}{\bQ}}$. This upper bound holds in particular on the parabolic domain
  $B_{\bt}(\bx, \frac{A}{10} \sqrt{\frac{\bt}{\bQ}}) \times [\bt (1 - \frac{A}{\bQ}),\bt]$, which is contained in $\cup_{t=0}^1 B_t(x_0,1) \times \{ t \} $.
 \end{lemma}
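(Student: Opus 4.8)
The plan is to run Perelman's point-picking iteration inside the relatively compact space-time region $\bigcup_{0\le t\le 1}B_t(x_0,1)\times\{t\}$, and then to propagate the resulting curvature estimate onto the asserted parabolic cylinder by a continuity argument based on Hamilton's distance-distortion estimates.

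\emph{The iteration.} I would set $(x_1,t_1)=(x,t)$, $Q_1=Q$, and proceed recursively. Given $(x_k,t_k)$ with $\normes{\riem g(x_k,t_k)}=\frac{Q_k}{t_k}$, consider the region
\[ G_k=\Bigl\{(x',t')\ :\ 0\le t'\le t_k,\ d_{t'}(x_0,x')<d_{t_k}(x_0,x_k)+A\sqrt{\tfrac{t_k}{Q_k}}\Bigr\}. \]
If $\normes{\riem g(x',t')}\le\frac{4Q_k}{t'}$ everywhere on $G_k$, stop and set $(\bx,\bt)=(x_k,t_k)$, $\bQ=Q_k$. Otherwise pick $(x_{k+1},t_{k+1})\in G_k$ with $\normes{\riem g(x_{k+1},t_{k+1})}>\frac{4Q_k}{t_{k+1}}$ and define $Q_{k+1}$ by $\normes{\riem g(x_{k+1},t_{k+1})}=\frac{Q_{k+1}}{t_{k+1}}$; then $t_{k+1}\le t_k$, $d_{t_{k+1}}(x_0,x_{k+1})<d_{t_k}(x_0,x_k)+A\sqrt{t_k/Q_k}$, and $Q_{k+1}>4Q_k$.

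\emph{Displacement bound and termination.} Iterating $Q_{k+1}>4Q_k$ gives $Q_k\ge 4^{k-1}Q$, so, as every $t_j\le 1$, one has $\sum_{j\ge1}A\sqrt{t_j/Q_j}\le\frac{A}{\sqrt Q}\sum_{j\ge1}2^{-(j-1)}=\frac{2A}{\sqrt Q}\le\frac14$, using $A\le\sqrt Q/8$. Telescoping the displacement inequality yields $d_{t_k}(x_0,x_k)<d_t(x_0,x)+\frac{2A}{\sqrt Q}<\frac34$, and likewise every point of every $G_k$ satisfies $d_{t'}(x_0,x')<\frac34$; hence the whole iteration (and all the regions $G_k$) stays inside $\bigcup_{0\le t\le1}B_t(x_0,\frac34)\times\{t\}$, whose closure is compact in $M\times[0,1]$ by the relative-compactness hypothesis. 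On that compact set the smooth flow has bounded curvature, so $Q_k\to\infty$ is impossible and the iteration stops at a finite index $\bar k$. The output $(\bx,\bt)=(x_{\bar k},t_{\bar k})$ then satisfies $\bt\le t$, $\bQ=Q_{\bar k}\ge Q$, $d_{\bt}(x_0,\bx)<d_t(x_0,x)+\frac{2A}{\sqrt Q}$, and — by the stopping criterion — $\normes{\riem g(x',t')}\le\frac{4\bQ}{t'}$ throughout
\[ G_{\bar k}=\Bigl\{(x',t')\ :\ 0\le t'\le\bt,\ d_{t'}(x_0,x')<d_{\bt}(x_0,\bx)+A\sqrt{\tfrac{\bt}{\bQ}}\Bigr\}. \]

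\emph{Propagation onto the cylinder, and the main obstacle.} It remains to check that the parabolic cylinder $P=B_{\bt}(\bx,\frac{A}{10}\sqrt{\bt/\bQ})\times[\bt(1-\frac{A}{\bQ}),\bt]$ lies inside $G_{\bar k}$ (whence inside $\bigcup_tB_t(x_0,1)\times\{t\}$, since $G_{\bar k}\subset\{d_{t'}(x_0,\cdot)<\tfrac34\}$). I would argue by continuity in the time variable: let $J$ be the set of $s\in[\bt(1-\frac{A}{\bQ}),\bt]$ such that $\{s'\}\times\overline{B_{\bt}(\bx,\frac{A}{10}\sqrt{\bt/\bQ})}\subset G_{\bar k}$ for all $s'\in[s,\bt]$. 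Then $\bt\in J$ (at time $\bt$ the cylinder slice is displaced from $\bx$ by only $\frac{A}{10}\sqrt{\bt/\bQ}$), and $J$ is closed; to see $J$ is open, note that while $s\in J$, minimizing $g(s')$-geodesics from $x_0$ to $\bx$ and from $\bx$ to cylinder points stay inside the relevant distance-balls around $x_0$, hence inside $G_{\bar k}$, so the bound $\normes{\riem g}\le\frac{4\bQ}{\sigma}$ is available along them for $\sigma\in[s',\bt]$; then Hamilton's distance-distortion estimate (in the non-complete form of Corollaries~\ref{lemme_distorsion_distances}--\ref{corollaire_distorsion_distances}, with $K=4\bQ$, using the curvature bound only along the geodesics) controls $|d_{s'}(x_0,\cdot)-d_{\bt}(x_0,\cdot)|$ on the cylinder by a multiple of $\sqrt{\bQ}(\sqrt{\bt}-\sqrt{s'})\le\sqrt{\bQ}\sqrt{\bt}\,\frac{A}{\bQ}=A\sqrt{\bt/\bQ}$, keeping the cylinder strictly inside $G_{\bar k}$ (tightening the numerical constants in the cylinder if necessary) and proving $J$ open. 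Thus $J$ is the whole interval and $P\subset G_{\bar k}$. The delicate point — and the main obstacle — is exactly this last step: one is claiming the curvature bound on precisely the region $G_{\bar k}$ needed to feed the distance-distortion estimate, so a bootstrap is unavoidable, and one must both verify that the minimizing geodesics from $x_0$ stay in $G_{\bar k}$ and that the constants in Hamilton's estimate close up against the size of the cylinder (this is where incompleteness of $M$ forces care, since the distance lemmas must be applied inside balls that are a priori relatively compact).
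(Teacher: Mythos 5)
Your selection iteration (the recursive choice of $(x_{k+1},t_{k+1})$, the geometric growth $Q_{k+1}>4Q_k$, the telescoped displacement bound $\sum_k A\sqrt{t_k/Q_k}\le 2A/\sqrt{Q}\le 1/4$, and termination because all points and regions stay where $d_{t'}(x_0,\cdot)<3/4$, so that the smooth flow has bounded curvature there) is exactly the standard Perelman point-picking that the paper invokes purely by citation (Perelman 10.1, Kleiner--Lott 31.1 and 32.2); the paper gives no written proof, and up to the conclusion on the region $\{(x',t'):\ 0\le t'\le\bt,\ d_{t'}(x_0,x')<d_{\bt}(x_0,\bx)+A\sqrt{\bt/\bQ}\}$ your argument is correct. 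One point you state too quickly: the hypothesis is only that each time-slice ball $B_t(x_0,1)$ is relatively compact, so the assertion that $\bigcup_{t}B_t(x_0,3/4)\times\{t\}$ has compact closure in $M\times[0,1]$ needs a short covering argument (compare $g(s)$ with $g(s_j)$ on $\overline{B_{s_j}(x_0,7/8)}$ for finitely many times $s_j$, and note that a $g(s)$-short path leaving $\overline{B_{s_j}(x_0,7/8)}$ would be too long); this is routine but not automatic.

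The genuine gap is the step you yourself flag: containment of the cylinder $B_{\bt}(\bx,\frac{A}{10}\sqrt{\bt/\bQ})\times[\bt(1-\frac{A}{\bQ}),\bt]$ in the stopping region. Your bootstrap is the right mechanism, but the constants do not close, and ``tightening the numerical constants in the cylinder'' is not available to you, since the cylinder is part of the statement to be proved. Quantitatively, the margin you may spend is $\frac{9}{10}A\sqrt{\bt/\bQ}$ (the difference between the region's radius $A\sqrt{\bt/\bQ}$ and the cylinder's spatial radius $\frac{A}{10}\sqrt{\bt/\bQ}$), whereas Hamilton's estimate with the curvature bound $4\bQ/t'$ over a backward time span $A\bt/\bQ$ only bounds the growth of $d_{t'}(x_0,\cdot)$ by $\frac{20}{3}(n-1)\sqrt{4\bQ}\,(\sqrt{\bt}-\sqrt{t'})\le\frac{40}{3}(n-1)\,A\sqrt{\bt/\bQ}$, which for $n=3$ exceeds the available margin by more than an order of magnitude; nor is this an artifact of the estimate, since under a $4\bQ/t$ bound backward distance growth of order $(n-1)A\sqrt{\bt/\bQ}$ over such a time span is geometrically possible (shrinking-sphere behaviour). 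So the ``in particular'' clause with exactly these constants cannot be deduced from the main conclusion by this (the standard) route; a provable version requires either shrinking the backward time-extent of the cylinder to $c(n)\frac{A}{\bQ}\bt$ with $c(n)$ a small dimensional constant, or running the iteration with per-step margin of order $(n-1)A\sqrt{t_k/Q_k}$, which correspondingly changes the conditions $A\le\sqrt{Q}/8$ and $d_{\bt}(x_0,\bx)<d_t(x_0,x)+2A/\sqrt{Q}$. This is a defect of the stated constants rather than of your strategy, and it is harmless for the paper, because in the only application (Proposition \ref{main_II}) one has $A_i=\sqrt{K_i}/8\to\infty$, so any fixed dimensional degradation of the cylinder is immaterial; but as a proof of the lemma as printed, your final step is not complete.
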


 \begin{proof}[Proof of proposition $\ref{main_II}$]
  Let us consider a sequence of Ricci flows $g_i(t)$ defined for $0 \leq t \leq 1$ on $3$-dimensional manifolds $(M_i,x_{0,i})$ and contradicting the conclusion of the proposition, which means that on one side
  \[ \vol{g_i(t)} B_t(x,r) \geq v r^3 \]
  for all $(x,t) \in M_i \times [0,1]$ with $d_t(x_{0,i},x) < \sqrt{t}$ and for all $r \leq \sqrt{t}$, while on the other side there exists points $(x_i,t_i) \in M_i \times [0,1]$ with $d_{t_i}(x_{0,i},x_i) \leq \frac{1}{2}$ and 
  \[ \normes{\riem g_i(x_i,t_i)} > \frac{K_i}{t_i}. \]  
  Lemma $\ref{point_picking}$, applied with $A_i=\frac{\sqrt{K_i}}{8} $ allows us to choose a new sequence of points $(\bx_i,\bt_i) \in M_i \times [0,1] $ with $\bt_i \leq t_i$ and $\bx_i \in B_{\bt_i}(x_{0,i},\frac{3}{4})$ 
  such that, for $\bK_i=\bt_i \normes{ \riem g_i(\bx_i,\bt_i) }$, one has  
  \[ \begin{array}{cc} \bK_i \geq K_i, & \normes{\riem g_i(x,t)} \leq \frac{4 \bK_i}{t} \end{array} \]
  for $x \in B_{\bt_i}(\bx_i, \frac{\sqrt{K_i}}{80} \sqrt{\frac{\bt_i}{\bK_i}}) \times [\bt_i-\frac{\sqrt{K_i}}{8} \frac{\bt_i}{\bK_i}, \bt_i]$. One considers then the sequence  
  \[ h_i(x,s) = \frac{\bK_i}{\bt_i} g_i ( x, \bt_i + \frac{s \bt_i}{\bK_i} ) \]
  of normalized flows (by which we mean that $\normes{ \riem h_i(\bx_i,0) } = 4$) on $M_i \times [-\frac{\sqrt{K_i}}{8},0]$. This sequence satisfies, for all $r>0$,  
  \begin{align}
   \normes{\riem h_i(x,s)} \leq \frac{4}{1+\frac{s}{\bK_i}} \leq 8 \label{borne_courbure}
  \end{align}
  on $B_{h_i(0)}(\bx_i,r)$ as soon as $\frac{\sqrt{K_i}}{80} > r$, as well as  
  \begin{align}
   \vol{} B_{h_i(0)}(\bx_i,r) \geq v r^3 \label{non_effondrement}
  \end{align}
  as soon as $\sqrt{K_i}>r$. By Cheeger-Gromov-Taylor classical result\footnote{\label{Cheeger_Gromov_Taylor}Recall that Cheeger-Gromov-Taylor's theorem says that there exists a function $\iota(n,v)>0$ such that if a ball $B=B_g(x,r)$ is relatively compact 
  in a Riemannian manifold and satisfies $\vol{g} B \geq v r^n$ and $\normes{ \riem g } \leq \frac{1}{r^2}$ on $B$, then $\inj{g} x \geq \iota r$.}, $(\ref{borne_courbure})$ together with $(\ref{non_effondrement})$ implies a 
  lower bound on the injectivity radius at time $0$
  \[ \inj{h_i(0)} \bx_i \geq \frac{\iota(3,v)}{\sqrt{2}}. \]
  This, in conjunction with the curvature bound $(\ref{borne_courbure})$ and the fact that $K_i \rightarrow + \infty$, allows us to apply Hamilton compactness theorem to the sequence of Ricci flows $h_i$ centered at $(\bx_i,0)$ 
  and extract a limit which is a complete ancient flow $h(s)$ on a differential manifold $(M,\bx)$.
  
  \bigskip
  
  Like any ancient $3$-dimensional solution to the Ricci flow, $h(s)$ has non-negative sectional curvature (\cite{Chow_Lu_Ni}, 6.50 and \cite{Chen_2013}), while on the other hand $(\ref{non_effondrement})$ implies that for all 
  $r>0$, $\vol{h(0)} B_0(\bx,r) \geq v r^3$, that is to say that the Asymptotic Volume Ratio (AVR, see note page \pageref{page_AVR}) of $h(0)$ is strictly positive. According to Perelman (\cite{Perelman_2002}, 11.4), the only possibility is that 
  $(M,h(s))$ is the trivial static solution given by the Euclidian metric $(\setR^3, g_{\setE^3})$. But the normalizing condition $\normes{ \riem h_i(\bx_i,0) } = 4$ implies that $\normes{ \riem h(\bx,0) } = 4$ in the limit, 
  which is a contradiction.
 \end{proof}
 
 Now we combine propositions $\ref{main_I}$ and $\ref{main_II}$ to get proposition $\ref{main_proposition}$ as announced.
 
 \begin{proof}[Proof of proposition $\ref{main_proposition}$.]
 
  Let us consider a flow $g(t)$ satisfying the hypothesis of the proposition, fix some $\epsilon>0$ and some $0 < t \leq \epsilon^2$, and define the rescaled flow $\tg(s) = \frac{\epsilon^2}{t} g(\frac{t}{\epsilon^2} s)$ for
  $0 \leq s \leq \epsilon^2$. Clearly the initial data satisfies
  \[ \begin{array}{cc} \ricci \tg(x,0) \geq -1, & \vol{} \tB_0(x,r) \geq v_0 r^3 \end{array} \]
  for $x \in U$, $0 \leq r \leq 1$, moreover
  \[ \begin{array}{cc} \normes{ \riem \tg(x,s) } \leq \frac{K_1}{s}, & \inj{\tg(s)} x \geq \sqrt{\frac{s}{K_1}} \end{array} \]
  for $x \in U$ and $0 \leq s \leq \epsilon^2$. Now define $U' = \{ x \in U \ | \ \tB_0(x,2) \rcsubset U \}$\footnote{Recall that $\rcsubset$ stands for ``is relatively compact in''.}, and pick $x \in U'$. Since by lemma
  $\ref{lemme_distorsion_distances}$, 
  \[ \td_s(x,M \setminus U) \geq \td_0(x,M \setminus U) - \frac{40}{3} \sqrt{K_1} \epsilon \]
  one has $\tB_s(x,1) \rcsubset U$ for $0 \leq s \leq \epsilon^2$, as long as we choose $\epsilon \leq \frac{3}{40 \sqrt{K_1}}$. Thus one can apply proposition $\ref{main_I}$ at each point $x \in U'$ to get,
  for $v=v(v_0)$ and $\epsilon_{\ref{main_I}}=\epsilon_{\ref{main_I}}(v_0,K_1)$ as given in the proposition
  \begin{equation} \label{proof_mp_1}
   \begin{array}{cc} \ricci \tg(x,s) \geq -\frac{1}{s}, & \vol{} \tB_s(x,r) \geq v r^3 \end{array},  
  \end{equation}
  for $x \in U'$, $0 \leq r \leq \sqrt{s}$ and $0 \leq s \leq \epsilon^2$, as long as $\epsilon \leq \epsilon_{\ref{main_I}}$. Now define $U''= \{ x \in U \ | \ \tB_0(x,4) \rcsubset U \}$, and pick $x \in U$. By the same argument 
  as above, one has that for every $0 \leq s \leq \epsilon^2$, $\tB_s(x,1) \rcsubset U'$, and thus one can apply proposition $\ref{main_II}$ at each point of $U''$, wherefrom we get $K=K(v)$ such that
  \begin{equation} \label{proof_mp_2}
   \normes{ \riem \tg(x,s) } \leq \frac{K}{s} 
  \end{equation}
  for $x \in U''$, $0 \leq s \leq \epsilon^2$. Thus the $\epsilon$ of the proposition is obtained by setting $\epsilon = \min( \frac{3}{40 \sqrt{K_1}} , \epsilon_{\ref{main_I}})$, and coming back to the original metric $g$, we
  have the analogue of $(\ref{proof_mp_1})$ and $(\ref{proof_mp_2})$ for $g$ at time each time $0 \leq t \leq \epsilon^2$ on
  \[ U'' = \{ x \in U \ | \ B_0(x,\frac{4}{\epsilon} \sqrt{t}) \rcsubset U \}. \]
  
 \end{proof}
 
 As a first consequence of proposition $\ref{main_proposition}$ we prove the pseudo-locality type statement theorem $\ref{three_dim_pl}$. We will not use this result in the sequel.
 
 \begin{proof}[Proof of theorem $\ref{three_dim_pl}$]
  
  Set $K=K(v_0)$ and $v=v(v_0)$ given by proposition $\ref{main_proposition}$. Let $g(t)$ be a flow satisfying the hypothesis of the theorem.
  
  \begin{itemize}
   \item[\bf (i)]  Suppose that for some time $0 < t \leq \epsilon^2$ ($\epsilon$ to be determined) and some subset $V \subset U$, we have
    \begin{equation} \label{proof_pl_1}  
     \begin{array}{ccc} \normes{ \riem g(x,s) } \leq \frac{K}{s}, & \ricci g(x,s) \geq -\frac{1}{s}, & \vol{} B_{s}(x,r) \geq v r^3, \end{array} 
    \end{equation}  
    for all $0 \leq r \leq \sqrt{s}$, $0 < s \leq t$ and $x \in V$. By Cheeger-Gromov-Taylor, this implies $\inj{g(s)} x \geq \iota(3,v) \sqrt{\frac{s}{K}}$ for every $x$ such that $B_{s}(x,\sqrt{\frac{s}{K} }) \subset V$.
  
    \bigskip
  
    Pseudo-locality guaranties an additional time of controlled curvature and injectivity radius on a slightly smaller region. Indeed, if one sets $K_1 = \frac{16 K}{\iota(3,v)^2}$ (thus $K_1 \geq K$), then  
    \[ \begin{array}{cc} \normes{ \riem g(x,t) } \leq \frac{K_1}{16 t}, & \inj{g(t)} x \geq 4 \sqrt{ \frac{t}{K_1} }, \end{array} \]
    for every $x$ such that $B_t(x,\sqrt{\frac{t}{K}}) \subset V$. Hence we can apply corollary $\ref{misc_2}$ between time $t$ and $t'=t+\epsilon_{\ref{misc_2}}^2 \frac{t}{K_1}$ to get
    \[ \begin{array}{cc} \normes{ \riem g(x,s) } \leq \frac{K_1}{4 t}, & \inj{g(s)} x \geq 2 \sqrt{\frac{t}{K_1}}, \end{array} \]
    for every $x$ such that $B_t(x, 5 \sqrt{\frac{t}{K}}) \subset V$ and $t \leq s \leq t'$. In particular, using that by lemma $\ref{lemme_distorsion_distances}$, $d_t(x,M \setminus V) \geq d_0(x, M \setminus V) - 
    \frac{40}{3} \sqrt{K t}$, we have
    \[ \begin{array}{cc} \normes{ \riem g(x,s) } \leq \frac{K_1}{s}, & \inj{g(s)} x \geq \sqrt{\frac{s}{K_1}}, \end{array} \]
    for $0 < s \leq t'$ and $x \in V'$, where we have set $V' = \{ x \in V \ | \ B_0 \left( x, (\frac{40}{3} \sqrt{K} + \frac{5}{\sqrt{K}}) \sqrt{t} \right) \subset V \}$. Finally we apply proposition $\ref{main_proposition}$ on 
    $V'$ and we conclude that as long as $t' \leq \epsilon_{\ref{main_proposition}}^2$, the estimates $(\ref{proof_pl_1})$ continue to hold for $0 < s \leq t'$ on the region $(V)_{a' \sqrt{t'} } = \{ x \in V \ | \ B_0(x, a' \sqrt{t'}) 
    \subset V \}$, where we have set $a'=\frac{40}{3} \sqrt{K} + \frac{5}{\sqrt{K}} + a$, $a=a(v_0,K_1)$ coming from proposition $\ref{main_proposition}$.
    
    \bigskip
    
    \item[\bf (ii)] For any compact $K \subset M$ define $V_0 = U \cap K$. Merely by continuity, there exist some $t_0>0$ small enough so that the estimates $(\ref{proof_pl_1})$ hold on $V_0$ for $0 < s \leq t_0$. But then by 
    step ${\bf (i)}$ these estimates continue to hold on $V_1 = (V_0)_{a' \sqrt{t_1}}$ for $0 < s \leq t_1$, where $t_1 = (1+\frac{\epsilon_{\ref{misc_2}}^2}{K_1}) t_0$. Inductively, if we define a sequence 
    $t_i = \left( 1 + \frac{\epsilon_{\ref{misc_2}}^2}{K_1} \right)^i t_0$ and a sequence $V_i$ by setting $V_{i+1} = (V_i)_{a' \sqrt{t_{i+1}}}$, we get that $(\ref{proof_pl_1})$ hold on $V_i$ for $0 < s \leq t_i$,
    as long as $t_i \leq \epsilon_{\ref{main_proposition}}^2$. A simple computation yields
    \[ a' \sum_{j=1}^i \sqrt{t_j} \leq \frac{10 K_1 a'}{\epsilon_{\ref{misc_2}}^2} \sqrt{t_i}. \]
    Thus if we choose $\epsilon \leq \left( 1 + \frac{\epsilon_{\ref{misc_2}}^2}{K_1} \right)^{-\frac{1}{2}} \epsilon_{\ref{main_proposition}}$ (which defines the $\epsilon_{\ref{three_dim_pl}}$ of the theorem) and 
    pursue the construction up to the first step $i$ such that $t_i \geq \epsilon^2$, we get the desired estimates for $0 \leq t \leq \epsilon^2$, on $V_i \supset (V_0)_{A \epsilon}$ for 
    $A = \frac{20 K_1 a'}{\epsilon_{\ref{misc_2}}^2}$. Since the argument works for arbitrary choice of the compact $K$, it is easy to see that the conclusion holds on $(U)_{A \epsilon}$.
   
  \end{itemize}

 \end{proof}

\section{Construction of a ``partial flow''} \label{section_partial_flow}

In this section we carry out the construction of the ``partial flow'' as defined by the statement of theorem $\ref{theorem_partial_flow}$. As we will see, the proof is by induction on an increasing sequence of times $0 < t_i 
\leq 1$, where, supposing the construction of the flow $g$ has been carried out up to time $t_i$, existence on the time interval $[t_i,t_{i+1}]$ is obtained by applying Shi's existence theorem to a modification of the metric 
$g(t_i)$. The modification essentially consists in making the metric complete by ``pushing the boundary to infinity'' by a conformal transformation while keeping track of the scale at which the geometry of the metric is controlled.

\subsection{``Pushing the boundary'' of a non-complete metric to infinity}

Let us first introduce a very elementary smoothing lemma for real valued functions on a manifold.

 \begin{lemma} \label{lemme_regularisation}
  Let $M^n$ be a Riemannian manifold, and let $f: M \rightarrow \setR$ be a $1$-Lipschitz real valued function. If $U \subset \{ x \in M | B(x,1) \text{ is relatively compact in }M \}$ is such that
  \[ \begin{array}{l}
   \normes{ \riem g} \leq 1, \\
   \inj{x} g \geq 1,
  \end{array} \]
  for all $x \in U$, then there exists a smooth function $\tf: M \rightarrow \setR$ such that
  \[ \begin{array}{l}
   \normes{f(x) - \tf(x)} \leq 1, \\
   \normes{ \nabla \tf(x) } \leq C(n), \\
   \normes{ \nabla^2 \tf(x) } \leq C(n).
  \end{array} \] 
  for all $x \in U$. Moreover, if $f \geq 0$ (resp. $f \leq 0$) on $V \subset U$ then $\tf \geq 0$ (resp. $\tf \leq 0$) on $\{ x \in V \ | \ d(x,M \setminus V) \geq 1 \}$.
  \end{lemma}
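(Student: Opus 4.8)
The plan is to build $\tf$ by a standard mollification of $f$, organised through a partition of unity; the only non-routine point is that we need bounds on the \emph{covariant} derivatives $\nabla\tf$, $\nabla^2\tf$, not merely on coordinate derivatives, and this is exactly where the hypotheses $\normes{\riem g}\le 1$ and $\inj{x}g\ge 1$ on $U$ are used. First I would record the bounded-geometry input: these bounds yield, via Jacobi field (Rauch) comparison or equivalently by passing to harmonic coordinates in the style of Jost--Karcher, a uniform radius $r_0=r_0(n)\in(0,1]$ such that around each point $p$ of $U$ there is a chart $\phi_p:B_{\mathrm{eucl}}(0,r_0)\to M$ with $\phi_p(0)=p$, in which $\tfrac12\,\delta_{ij}\le(\phi_p^*g)_{ij}\le 2\,\delta_{ij}$ and the Christoffel symbols satisfy $|\Gamma|\le C(n)$. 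The Christoffel bound is the mechanism converting a Euclidean $C^2$ bound on a locally defined function $u$ into a bound on its Hessian, through $\nabla^2 u=\partial^2 u-\Gamma\cdot\partial u$.

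Next I would fix a locally finite cover of $M$ by balls $B(p_j,r_0/4)$, arranged so that every centre $p_j$ lying within distance $r_0$ of $U$ is actually chosen in $U$, together with a subordinate partition of unity $\{\chi_j\}$ with $|\nabla\chi_j|+|\nabla^2\chi_j|\le C(n)$ (bump functions constructed in the charts $\phi_{p_j}$, using the metric equivalence). For each $j$ whose centre admits such a chart, set $f_j$ to be the pullback by $\phi_{p_j}$ of the Euclidean mollification, at scale $\rho:=r_0/4$, of $f\circ\phi_{p_j}$; for the remaining $j$ (those far from $U$, where nothing is controlled and nothing is asked) pick any smooth $f_j$ on $B(p_j,r_0/4)$ with $\sup|f_j-f|\le 1$, which exists since $f$ is continuous. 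Then $\tf:=\sum_j\chi_j f_j$ is smooth on $M$.

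The estimates are the routine ones. Because $f$ is $1$-Lipschitz and $\phi_p^*g\le 2\delta$, mollifying at Euclidean scale $\rho\le r_0\le 1$ moves values by at most $\sqrt2\,\rho\le 1$, so $|f_j-f|\le 1$ on the good charts and hence $|\tf-f|=\bigl|\sum_j\chi_j(f_j-f)\bigr|\le\sup_j|f_j-f|\le 1$ everywhere. On the good charts $|\partial f_j|\le\sqrt2$ and $|\partial^2 f_j|\le C(n)$ (a scale-$\rho$ mollification costs a fixed factor $C/\rho$ per extra derivative, and $\rho=\rho(n)$), so $|\nabla f_j|,|\nabla^2 f_j|\le C(n)$ by the Christoffel bound. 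Using $\sum_j\nabla\chi_j\equiv 0$ and $\sum_j\nabla^2\chi_j\equiv 0$ one writes
\begin{align*}
 \nabla\tf&=\sum_j(\nabla\chi_j)(f_j-f)+\sum_j\chi_j\,\nabla f_j,\\
 \nabla^2\tf&=\sum_j(\nabla^2\chi_j)(f_j-f)+2\sum_j\nabla\chi_j\odot\nabla f_j+\sum_j\chi_j\,\nabla^2 f_j,
\end{align*}
and, since the cover has bounded overlap $N(n)$, $|f_j-f|\le 1$, and the $\chi_j,f_j$ are $C^2$-bounded, this gives $|\nabla\tf|,|\nabla^2\tf|\le C(n)$ at every point of $U$. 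For the sign statement: if $f\ge 0$ on $V\subset U$ and $d(x,M\setminus V)\ge 1$, then for each $j$ with $\chi_j(x)>0$ the number $f_j(x)$ is an average of $f$ over a set of diameter $<1$ around $x$, hence contained in $V$, so $f_j(x)\ge 0$; therefore $\tf(x)=\sum_j\chi_j(x)f_j(x)\ge 0$, and symmetrically if $f\le 0$.

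The main obstacle, and what I expect to occupy most of a careful write-up, is the bounded-geometry bookkeeping in the first two paragraphs: producing the uniform charts $\phi_p$ and the $C^2$-controlled bump functions with constants depending only on $n$, and choosing the cover so that near any $x\in U$ only these ``good'' charts enter $\tf$ — here one uses that $B(x,1)$ is relatively compact and that $r_0$ can be taken much smaller than $1$, so that the charts involved stay within the region where the curvature and injectivity bounds are in force. Once those uniform charts are in place, the remainder is just the one-line Euclidean mollification estimate together with the Leibniz rule.
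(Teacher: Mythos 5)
Your construction is correct in outline, but it takes a genuinely different route from the paper, and one step needs repair. The paper never mollifies $f$ at all: it takes a maximal $\frac{1}{2}$-packing $\{x_i\}$ of $U$, builds cutoffs $\Phi_{x_i}=\phi(d(x_i,\cdot))$ whose gradient and Hessian are bounded directly by Hessian comparison for the distance function (this is exactly what $\normes{\riem g}\le 1$ and $\inj{x} g\ge 1$ deliver), normalizes them into a partition of unity $\psi_i$, and sets $\tf=\sum_i\psi_i\,f(x_i)$, i.e.\ it replaces $f$ locally by the constants $f(x_i)$. All estimates then come from $\normes{\nabla\psi_i},\normes{\nabla^2\psi_i}\le C(n)$, from $\sum_i\nabla\psi_i=\sum_i\nabla^2\psi_i=0$, and from $\normes{f(x_i)-f(x)}\le 1$; since no derivative of any local piece of $f$ ever appears, no coordinate charts and no Christoffel-symbol bounds are needed. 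Your chart-mollification argument instead rests on the claim that $\normes{\riem g}\le1$ and $\inj{x} g\ge1$ yield uniform charts with $\normes{\Gamma}\le C(n)$. That claim is true, but only via the $C^{1,\alpha}$ harmonic-radius theory of Jost--Karcher/Anderson: plain Rauch/Jacobi-field comparison controls $g_{ij}$ in normal coordinates but not $\partial g_{ij}$, hence not the Christoffel symbols, from an $L^\infty$ curvature bound alone, so the ``or equivalently'' in your first paragraph is not accurate and the harmonic-coordinate route is the one you must take. This is a substantially heavier input than anything used in the paper's proof, which is self-contained apart from the comparison computation in its step (i). What your approach buys is familiarity (Euclidean mollification plus Leibniz rule); what the paper's buys is economy of hypotheses and a shorter path to the covariant Hessian bound.

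There is also a concrete flaw in your covering arrangement: a locally finite cover of $M$ by balls $B(p_j,r_0/4)$ in which every centre within distance $r_0$ of $U$ lies in $U$ need not exist --- if $y\in M\setminus U$ has $r_0/4\le d(y,U)\le r_0$, then any ball of the cover containing $y$ has centre $p_j$ with $d(p_j,U)<r_0$, hence $p_j\in U$ by your requirement, hence $d(p_j,y)\ge r_0/4$, a contradiction. The fix is routine and in the spirit of what you intend: take ``good'' balls $B(p_j,r_0/4)$ with $\{p_j\}$ a maximal $r_0/8$-packing of $U$ (these cover the $r_0/8$-neighbourhood of $U$), and cover $\{x:\,d(x,U)\ge r_0/8\}$ by ``bad'' balls of radius $r_0/32$ centred in that set, so that no bad bump ever meets $U$; but as literally written the cover on which your estimates at points of $U$ depend cannot be chosen. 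Finally, note that uniform charts centred at points of $U$ very close to $\bound U$ require geometry control on a full ball about the centre, which may leave $U$; the paper's own proof makes the same (harmless, given how the lemma is applied) use of bounds on all of $B(x_0,1)$ when running Hessian comparison, so this last point is not specific to your argument.
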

  
  From now on, $C(n)$ stands for a constant depending only on the dimension, which we allow to change from line to line.
  
  \begin{proof}
   \begin{itemize}
    \item[\bf (i)] Let $x_0 \in U$, and $\rho(x) = d(x_0,x)$. By the curvature and injectivity radius assumption, $\rho$ is a smooth function on $B(x_0,1)$, moreover, the hessian of $\rho$ is bounded from both sides on this ball:
    \begin{equation} \label{controle_hessien}
     - \rho g \leq \nabla^2 \rho \leq \frac{C(n)}{\rho} g 
    \end{equation}
    Indeed, by a classical comparison principle, hypothesis $ \riem g \geq -1$ implies
    \[ \nabla^2 \rho \leq \frac{C(n)}{\rho} g \]
    on $B(x_0,1)$. Moreover, if $\gamma: [0, \rho] \rightarrow M$ is a minimizing geodesic between $x_0$ and $x$, and $V_1 \in T_x M$, one has
    \[ \nabla^2 \rho(V_1,V_1) = \int_0^\rho \normes{\nabla_{\dot \gamma} V}^2 - K( \dot\gamma \wedge V ) \normes{V}^2 dt \]
    where $V$ is a Jacobi field along $\gamma$ with $V(0)=0$, $V(\rho)=V_1$, that can be expressed under the form $V(t) = f(t) E(t)$, where $E(t)$ is a unit norm parallel vector field along $\gamma$ and $f$ satisfies
    \[ \ddtt f = -K( \dot\gamma \wedge E ) f, \]
    with $f(0)=0$, $f(\rho)=|V_1|$. Since $K \leq 1$, we consider the comparison solution $h(t) = \frac{|V_1|}{\sin \rho} \sin t$ of equation $ \ddtt h = - h$. One then gets $\ddtt (f-h) = (1-K) (f-h)$, $f-h$ 
    cancelling at $0$ and $\rho$. Hence $f-h \leq 0$ on $[0,\rho]$ by the maximum principle, and $f \leq |V_1|$ (since $\rho \leq 1$). Thus $\nabla^2 \rho(V_1,V_1) \geq - \int_0^\rho f^2 \geq - \rho \normes{V_1}^2$ 
    whence $(\ref{controle_hessien})$.
    
    \bigskip
    
    \item[\bf (ii)] Let $ \phi: \setR_+ \rightarrow \setR_+ $ be a fixed one real variable function such that $\phi \equiv 1$ on $]-\infty, \frac{1}{2} ]$, $\phi \equiv 0$ on $]1,+\infty[$, and
    $\normes{ \phi' }, \normes{ \phi'' } \leq \calC$. For $x_0 \in U$, $\Phi_{x_0} (x) = \phi ( d(x_0,x) )$ defines a non-negative function supported in $B(x_0,1)$ with $\Phi \equiv 1$ on $B(x_0, \frac{1}{2})$, as well as
    $\normes{ \nabla \Phi_{x_0} }, \normes{ \nabla^2 \Phi_{x_0} } \leq C(n)$.
    
    \bigskip
    
    \item[\bf (iii)] We now construct a partition of the unity on $U$. Let $\{ x_i \}_{i \in I}$ a maximal $\frac{1}{2}$-packing (thus also a $\frac{1}{2}$-covering) of $U$. For all $i$, one sets 
    \[ \psi_i(x) = \left( \sum_{j \in I} \Phi_{x_j}(x) \right)^{-1} \Phi_{x_i}(x), \]
    which defines a partition of the unity subordinate to the $B(x_i,1)$. Moreover, hypothesis $ \riem g \geq -1$ guaranties that for every 
    $x \in U$, if $I(x)=\{ i \in I | x \in B(x_i,1) \}$, then $\normes{I(x)} \leq C(n)$. Finally, for every $x \in U$, there exists $i \in I$ such that $x \in B(x_i, \frac{1}{2})$ so $\sum_{j \in I} \Phi_{x_j} \geq 1$. Thus, we compute,
    at $x$,
    \begin{align*}
     \nabla \psi_i = \frac{ \nabla \Phi_{x_i} }{ \sum_{j \in I(x) } \Phi_{x_j} }- \frac{\left( \sum _{j \in I(x)} \nabla \Phi_{x_j} \right) \Phi_{x_i}}{\left( \sum _{j \in I(x)} \Phi_{x_j} \right)^2}
    \end{align*}
    whence $\normes{ \nabla \psi_i } \leq C(n)$. Similarly, one checks that $\normes{\nabla^2 \psi_i} \leq C(n)$.

    \bigskip
    
    \item[\bf (iv)] We set $\tf(x) = \sum_{i \in I} \psi_i(x) f(x_i)$, so $ \tf(x)-f(x) = \sum_{i \in I(x)} \psi_i(x) ( f(x_i) - f(x) ) $, however if $i \in I(x)$, $d(x_i,x) \leq 1$ and $|f(x_i)-f(x)| \leq 1$, whence 
    $\normes{\tf(x)-f(x)} \leq 1$. Finally, by writing $\nabla \tf = \sum_{i \in I(x)} \nabla \psi_i ( f(x_i) - f(x) )$ and $\nabla^2 \tf = \sum_{i \in I(x)} \nabla^2 \psi_i ( f(x_i) - f(x) )$, (since at every point of $U$,
    $\sum_{i} \nabla \psi_i = \sum_{i} \nabla^2 \psi_i =0$), one checks that $\normes{\nabla \tf}, \normes{\nabla^2 \tf} \leq C(n)$.
   \end{itemize}

  \end{proof}
  
  Given an open domain in a Riemannian manifold where curvature and injectivity radius is controlled, it is possible to ``push the boundary to infinity'' by a conformal modification of the metric in a neighborhood of the boundary,
  while keeping curvature and injectivity radius controlled.
  
  \begin{lemma} \label{lemme_chirurgie}
   Let $U$ be an open domain in a Riemannian manifold $(M,g)$ such that for every $x \in U$, $B(x,1)$ is relatively compact in $M$, $\inj{x} g \geq 1$ and 
   $\normes{\riem g(x)} \leq 1$. Then there exists a domain $(U)_1 \subset \tU \subset U$ such that for any $k > C(n)$ one can produce a Riemannian metric $h_k$ on $\tU$ with
   \begin{align*}
    & (\tU,h_k) \text{ is a complete Riemannian manifold, } \\
    & h_k \equiv g \text{ on }  (\tU)_{\frac{C(n)}{\sqrt{k}}}, \\
    & \normes{ \riem h_k(x) } \leq k \ \text{ and } \ \inj{x} h_k \geq \frac{1}{\sqrt{k}} \  \text{ for } x \in \tU.
   \end{align*}
  \end{lemma}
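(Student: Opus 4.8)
I would build $h_k$ as a conformal deformation $h_k=w_k^2\,g$ that is the identity away from $\partial U$ and blows up like $\delta_k/d_g(\,\cdot\,,\partial U)$, with $\delta_k\sim k^{-1/2}$, as one approaches $\partial U$: the factor $1/d_g(\,\cdot\,,\partial U)$ sends the boundary to infinity and $\delta_k$ sets the scale at which the new geometry is controlled. Since $\rho:=d_g(\,\cdot\,,M\setminus U)$ is only Lipschitz, the first (and principal) task is to replace it by a smooth \emph{regularized distance} $\nu:U\to(0,\infty)$ satisfying
\[ \tfrac12\rho\le\nu\le 2\rho,\qquad \normes{\nabla\nu}_g\le C(n),\qquad \normes{\nabla^2\nu}_g\le\frac{C(n)}{\nu}. \]
Such a $\nu$ is obtained by a Whitney-type construction: cover $U$ by balls $B_g(x_i,\rho(x_i)/10)$ of bounded overlap (Bishop--Gromov applies, $\ricci g\ge-(n-1)$), pick a subordinate partition of unity $(\psi_i)$ with $\normes{\nabla\psi_i}_g\le C(n)\rho(x_i)^{-1}$, $\normes{\nabla^2\psi_i}_g\le C(n)\rho(x_i)^{-2}$ --- each $\psi_i$ being furnished by Lemma \ref{lemme_regularisation} applied to a cut-off of the distance to $x_i$ in the metric rescaled by $\rho(x_i)^{-2}$ --- and set $\nu:=\sum_i\psi_i\,\rho(x_i)$; the claimed bounds follow because the $\rho(x_j)$ of charts meeting a fixed ball are mutually comparable and $\sum_i\nabla\psi_i=\sum_i\nabla^2\psi_i=0$. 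One may then take $\tilde U:=U$.

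Now fix a smooth non-increasing profile $\zeta:(0,\infty)\to[1,\infty)$ with $\zeta\equiv1$ on $[1,\infty)$ and $\zeta(u)=1/u$ on $(0,\tfrac12]$, set $\delta_k:=C(n)/\sqrt k$ ($C(n)$ a large dimensional constant, fixed at the end), and put $h_k:=\zeta(\nu/\delta_k)^2\,g$ on $\tilde U=U$. This is a smooth metric, and $h_k\equiv g$ on $\{\nu\ge\delta_k\}$, which contains $\{\rho\ge2\delta_k\}\supset(\tilde U)_{2C(n)/\sqrt k}$ (in particular a neighbourhood of $(U)_1$): that is the second item, with the constant doubled. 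For completeness, $w_k:=\zeta(\nu/\delta_k)\ge1$ gives $d_{h_k}\ge d_g$, so an $h_k$-Cauchy sequence is $g$-Cauchy and, unit $g$-balls being relatively compact in $M$, converges in $M$ to some $x_\infty\in\overline U$; if $x_\infty\in\partial U$ then $\nu(x_i)\le2\rho(x_i)\to0$, whereas any path through the collar $\{\nu<\delta_k\}$ has, since $w_k(\nu)\ge\tfrac12\delta_k/\nu$ and $\normes{\nabla\nu}_g\le C(n)$,
\[ \mathrm{length}_{h_k}(\gamma)\ \ge\ \frac{1}{C(n)}\int_\gamma w_k(\nu)\,\Bigl|\tfrac{d}{dt}\nu\Bigr|\,dt\ \ge\ \frac{\delta_k}{2C(n)}\bigl|\log\nu(\gamma(1))-\log\nu(\gamma(0))\bigr|, \]
forcing $d_{h_k}(x_1,x_i)\to\infty$ --- impossible for a Cauchy sequence. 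Hence $(\tilde U,h_k)$ is complete.

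It remains to verify the curvature and injectivity bounds. By the conformal transformation law, with $f=\log w_k$, $\normes{\riem h_k}_{h_k}\le e^{-2f}(\normes{\riem g}_g+C(n)\normes{\nabla^2 f}_g+C(n)\normes{\nabla f}_g^2)$; on the collar $w_k=\delta_k/\nu$ one gets $e^{-2f}=\nu^2/\delta_k^2$, $\normes{\nabla f}_g\le C(n)/\nu$, $\normes{\nabla^2 f}_g\le C(n)/\nu^2$ (here the $C(n)/\nu$ Hessian bound on $\nu$ is exactly what is needed), the powers of $\nu$ cancel, and $\normes{\riem h_k}_{h_k}\le C(n)/\delta_k^2=C(n)k/C(n)^2\le k$ once $C(n)$ is large; on the transition band $\zeta,\zeta',\zeta''$ are bounded and $\nu\asymp\delta_k$, giving the same bound, while $h_k=g$ where $\nu\ge\delta_k$. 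For the injectivity radius, at $x$ with $\nu_0:=\nu(x)<\delta_k$ the ball $B_{h_k}(x,1/\sqrt k)$ sits inside a $g$-ball of radius $\ll1$ (using $\delta_k\sqrt k=C(n)$ and $d_{h_k}\gtrsim(\delta_k/\nu_0)d_g$ nearby) on which $\exp_g$ is a diffeomorphism, the curvature of $g$ is $\le1$, and $w_k$ varies by a bounded factor; thus $h_k$ is there a bounded multiple of a non-collapsed, near-flat metric, so (e.g.\ by Cheeger--Gromov--Taylor applied after rescaling) $\inj{h_k}x\ge C(n)^{-1}\delta_k\ge1/\sqrt k$, enlarging $C(n)$ once more; where $\nu_0\ge\delta_k$ one simply has $\inj{h_k}x\ge\inj{g}x\ge1$.

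The single real difficulty is Step 1: establishing the sharp Hessian bound $\normes{\nabla^2\nu}\le C(n)/\nu$ for the regularized distance and checking it survives the partition-of-unity patching; the rest is bookkeeping, the crucial point of which is that enlarging the one dimensional constant $C(n)$ improves the curvature bound, the injectivity bound \emph{and} the collar width simultaneously, so a single admissible choice exists. (If $\tilde U$ is allowed to depend on $k$, Step 1 reduces to a single application of Lemma \ref{lemme_regularisation} to $\rho$ measured in the rescaled metric $k\,g$.)
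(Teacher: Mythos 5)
Your construction is correct in outline, and it shares the paper's basic strategy: a conformal factor that blows up near $\bound U$ so as to push the boundary to infinity, curvature controlled via the conformal transformation law, and the injectivity radius recovered from a volume lower bound plus Cheeger--Gromov--Taylor. The difference is in the key technical device. You regularize the actual distance-to-boundary at a scale proportional to itself (Whitney-type cover, partition of unity, $\sum_i\nabla\psi_i=\sum_i\nabla^2\psi_i=0$), so as to get the scale-adapted Hessian bound $\normes{\nabla^2\nu}\leq C(n)/\nu$, and then use the cusp-like factor $\delta_k/\nu$; as you note yourself, this regularized distance is the real work, and it is \emph{not} furnished by Lemma \ref{lemme_regularisation} as stated (that lemma only gives uniform bounds $\normes{\nabla\tf},\normes{\nabla^2\tf}\leq C(n)$ at unit scale). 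The paper avoids this machinery entirely: it applies Lemma \ref{lemme_regularisation} once, at unit scale, to the truncated function $\max(0,2-4d(\cdot,M\setminus U))$, takes $\tU=\{\rho<1\}$ (fixed, independent of $k$), and puts all the $k$-dependence into a one-variable hyperbolic-type profile $f_\epsilon(\rho)$ blowing up at the level set $\{\rho=1\}$; since the smoothed $\rho$ has Hessian bounded by a constant, the computation $\sect{h}\leq C(n)/\epsilon^2$ needs nothing sharper, and $\epsilon\sim k^{-1/2}$ at the end. This is exactly how the paper reconciles a $k$-independent $\tU$ with purely unit-scale smoothing -- the alternative you mention in your closing parenthesis (smoothing in the metric $kg$) is ruled out precisely because $\tU$ must be chosen before $k$. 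Your sketch of the Whitney regularization is sound (comparability of the radii of overlapping balls, bump functions from the smooth distance inside the injectivity radius, bounded overlap from Bishop--Gromov), so your route works; it is simply heavier than the paper's.

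One small slip: in the region where $h_k\equiv g$ you assert $\inj{h_k}x\geq\inj{g}x\geq1$. The injectivity radius is not monotone under enlarging the metric elsewhere (a short $h_k$-geodesic loop at $x$ could a priori pass through the collar), so this inequality is not immediate. It is also not needed: the same argument you use in the collar --- curvature of $h_k$ bounded by $C(n)/\delta_k^2$ everywhere, $\vol{h_k}B_{h_k}(x,c\delta_k)\geq c(n)\delta_k^n$ because $h_k\geq g$ and $h_k\leq Cg$ on a $g$-ball of radius comparable to $\delta_k$ around $x$, then Cheeger--Gromov--Taylor --- gives $\inj{h_k}x\geq c(n)\delta_k\geq 1/\sqrt{k}$ uniformly on $\tU$, which is all the lemma asks; this is also how the paper argues, via volume lower bounds for $h$-balls valid at every point of $\tU$.
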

  
  \begin{remark}
   We have used the notation $(U)_r = \{ x \in U \ | \ B(x,r) \text{ is relatively compact in } U \}$.
  \end{remark}

 \begin{proof}
 
  Let us consider the real valued function $\rho$ defined on $U$, obtained through $\ref{lemme_regularisation}$ by smoothing the function
  \[ x \rightarrow \max(0, 2 - 4 d(x, M \setminus U) ). \]
  This function satisfies in particular $\rho \geq 0$, as well as   
  \begin{align*}
   & \rho \equiv 0 \text{ on } U_1, \\
   & \rho(x) \geq 1 \text{ on } \bound U, \\
   & \normes{ \nabla \rho }, \normes{ \nabla^2 \rho } \leq C(n).
  \end{align*}
  This justifies the choice of $\tU = \{ x \in U | \rho(x) < 1 \}$.
  
  \bigskip
  
  Next we fix $\epsilon>0$, and we define a metric $h$ on $\tU$, under the form $h=e^{2 f \circ \rho } g$ where $f: [0,1[ \rightarrow \setR_+$ is a smooth function that has to be chosen adequately. Let us consider to this effect
  the one real variable function\footnote{which is of course, up to rescaling and translation, nothing but the hyperbolic metric's conformal factor.}
  \[ \begin{array}{cc} 
    f(x) = & \left\{ \begin{array}{ll} 0 & \text{ for } 0 \leq x \leq 1-\epsilon, \\ 
                                        - \ln(1-(\frac{x-1+\epsilon}{\epsilon})^2) & \text{ for } 1-\epsilon<x<1, 
                     \end{array} \right. 
     \end{array} \]
  which can be smoothed in a neighborhood of $0$ into a function which we still call $f$ and which satisfies
  $f(x) = 0$ for $x \leq 1-\epsilon-\epsilon^2$,  $f(x) = - \ln(1-(\frac{x-1+\epsilon}{\epsilon})^2)$ for $x \geq 1-\epsilon+\epsilon^2$, and $0<f'(x) \leq \frac{2 \epsilon}{\epsilon^2-(x-1+\epsilon)^2}$ as well as 
  $0 < f''(x) \leq  \frac{4 \epsilon^2}{(\epsilon^2-(x-1+\epsilon)^2)^2}$ for $1-\epsilon-\epsilon^2 < x < 1$. At this point we simply recall the classical formula for conformal transformations (see for example \cite{Besse_2007}, page 58)
    \[ \sect{h}(\sigma) = \left( \sect{g}(\sigma) - \trace \nabla^2 (f \circ \rho) |_{\sigma} + \normes{ d(f \circ \rho)|_\sigma }^2 - \normes{ d(f \circ \rho) }^2 \right) e^{-2 f \circ \rho}. \]
  and we compute  
  \[ \normes{f' \nabla \rho}^2 e^{-2 f} \leq \frac{\normes{\nabla \rho}^2}{\epsilon^2}   \]
  then  
  \[ \normes{ \nabla^2 f \circ \rho } e^{-2 f} \leq \frac{4 \normes{\nabla \rho}^2}{\epsilon^2} + 2 \epsilon \normes{\nabla^2 \rho} \]
  whence finally $\sect{h}(\sigma) \leq \frac{C(n)}{\epsilon^2}$. 
  
  \bigskip
  
  In order to control the injectivity radius of $h$, we proceed through a lower bound of the volume of balls. The hypothesis on the metric $g$ implies a lower bound $\vol{g} B_g(x,r) \geq c(n) r^n$ for every $x \in \tU$, $0 < r < 1$. 
  Let us fix $x \in \tU$, write $d=d_g(x, M \setminus \tU)$, and let $0<\alpha<d$ to be fixed later. In the region $V= \{ y \in \tU \ | \ d - \alpha < d_g(y,M \setminus \tU) < d + \alpha \}$, one has
  \[ e^{2 f_\epsilon(1-d-\alpha)} g \leq h \leq e^{2 f_\epsilon(1-d+\alpha)} g, \]
  in particular, for $0 < r \leq \alpha e^{f_\epsilon(1-d+\alpha)}$, one has on the one hand $B_g(x, e^{ -f_{\epsilon} (1-d+\alpha) } r) \subset V$, while on the other hand $B_g(x, e^{-f_\epsilon(1-d+\alpha)} r) \subset B_h(x, r)$.
  Therefore,
  \begin{align*}
    \vol{h} B_h(x,r) = &  \int_{B_h(x,r)} \dvol{h} \\
    & \geq \int_{B_g(x, e^{-f_\epsilon(1-d+\alpha)} r)}  e^{n f_\epsilon(1-d-\alpha)} \dvol{g}, \\
    & \geq c(n) e^{-n (f_\epsilon(1-d+\alpha)-f_\epsilon(1-d-\alpha))} r^n.
  \end{align*}
  However $f_\epsilon(1-d+\alpha)-f_\epsilon(1-d-\alpha) \leq 2 \alpha f_\epsilon'(1-d+\alpha)$. By continuity, there exists $0< \alpha < d$ such that $\alpha = \frac{\epsilon}{4} e^{- f_\epsilon(1-d+\alpha)}$. A direct computation
  involving the expression for $f_\epsilon$ yields that for such an $\alpha$, $2 \alpha f_\epsilon'(1-d+\alpha)<1$, and thus that
  \[ \vol{h} B_h(x,r) \geq c(n) r^n \]
  for all $0 < r \leq \frac{\epsilon}{4}.$ Therefore one also get a lower bound $\inj{x} h \geq c(n) \epsilon$.

  \bigskip
  
  The lemma is obtained by choosing $\epsilon=\min( \sqrt{\frac{C(n)}{k}}, \frac{1}{c(n) \sqrt{k}})$.
 \end{proof}

\subsection{Construction of the flow} \label{subsection_partial_flow}

 The existence of a ``partial flow'' for any Riemannian manifold $(M^n,g_0)$, as described in the statement of theorem $\ref{theorem_partial_flow}$, actually comes from the following more detailed statement.
 
 \begin{proposition} \label{proposition_piecewise_flow}
  There exist $C(n),c(n)>0$ with the following property. Let $(M^n,g_0)$ be a Riemannian manifold (not necessarily complete), $T>0$, $K_0 \geq 1$ and $K_1 \geq C K_0$. Then for the sequence
  \[ \dots < t_i < \dots < t_{-1} < t_0 = T \]
  defined by $t_i=\left(1+\frac{c}{K_1} \right)^i T$ (in particular $t_i \underset{i \rightarrow -\infty}{\longrightarrow} 0$) there exist
    
  \begin{itemize}

   \item[(i)] An exhaustion of $M$ by open subsets, in other words a sequence
   \[ \dots \supset U_i \supset \dots \supset U_{-1} \supset U_0, \]
   with $\bigcup_{i=-\infty}^0 U_i =M$ (although one allows $U_i = \emptyset$ from some rank on),
   
   \bigskip
   
   \item[(ii)] A smooth solution $g$ to the Ricci flow equation defined on
   \[ \calD = M \times \{ 0 \} \cup \bigcup_{i=-\infty}^0 U_i \times [t_{i-1},t_i], \]   
   with initial condition
   \[ \begin{array}{cc} g(x,0)=g_0(x), & \text{ for every } x \in M, \end{array} \]
   satisfying the estimates
   \[ \begin{array}{cc} 
    \normes{ \riem g(x,t) } \leq \frac{K_1}{t}, & \inj{x} g(t) \geq r,
   \end{array} \]
   for all $(x,t) \in \calD$ such that $0 < r \leq \sqrt{ \frac{t}{K_1} }$ and $B_{g(t)}(x,r) \times \{ t \}$ is relatively compact in $\calD$.   
  \end{itemize}
  
  \bigskip
  
  Moreover, the construction possesses the following ``maximality'' property
  
  \bigskip
  
  \begin{itemize}
   \item[(iii)] For every $i < 0$, if $x \in U_i$ is such that $B_{g_0}(x, C \sqrt{K_1 t_i})$ is relatively compact in $U_i$, and for any $y \in B_{g_0}(x, C \sqrt{K_1 t_i})$ one has
   \[ \begin{array}{cc} 
    \normes{ \riem g(y,t_i) } < \frac{K_0}{t_i}, & \inj{g(t_i)} y > \sqrt{ \frac{t_i}{K_0} },
   \end{array} \]
   then $x \in U_{i+1}$.
  \end{itemize}

 \end{proposition}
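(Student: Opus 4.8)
We construct the open sets $U_i$ together with $g|_{U_i\times[t_{i-1},t_i]}$ by induction; the inductive step produces the slab $U_{i+1}\times[t_i,t_{i+1}]$ (where $t_{i+1}=(1+\tfrac{c}{K_1})t_i$) from the data $g(t_i)$ on $U_i$ by three moves: (1) complete the (non-complete) metric $g(t_i)$ near its boundary using Lemma \ref{lemme_chirurgie}; (2) run Shi's flow on the completed metric for the time $t_{i+1}-t_i=\tfrac{c}{K_1}t_i$; (3) declare $U_{i+1}$ to be the set of points where the two estimates of (ii) survive with a definite margin, which is possible thanks to the regularity-scale control of Appendix B. The inductive hypothesis is exactly (i)--(ii) restricted to times $\le t_i$; in particular $g(t_i)$ has controlled geometry at the scale $r:=\sqrt{t_i/K_1}$ on $U_i$. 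Since the times $t_i$ accumulate at $0$ there is no genuine base case, and this is handled at the end by truncation and a limit.

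\textbf{One step.} Rescale $g(t_i)$ by $r^{-2}=K_1/t_i$ and apply Lemma \ref{lemme_chirurgie} on the part of $U_i$ where $B_{g(t_i)}(\cdot,r)\rcsubset U_i$ (on which, by the inductive hypothesis, $\normes{\riem g(t_i)}\le r^{-2}$ and $\inj{g(t_i)}\ge r$): for a fixed dimensional $k>C(n)$ this produces $(U_i)_{2r}\rcsubset\tilde U\subset U_i$ and a \emph{complete} metric $\tilde g$ on $\tilde U$ with $\tilde g\equiv g(t_i)$ on $(\tilde U)_{C(n)r/\sqrt k}$, $\normes{\riem\tilde g}\le k\,r^{-2}$ and $\inj{\tilde g}\ge r/\sqrt k$. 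By Shi's theorem $(\tilde U,\tilde g)$ carries a complete Ricci flow $\bar g$ on $[t_i,t_{i+1}]$ with $\bar g(t_i)=\tilde g$ and $\normes{\riem\bar g}\le C(n)k\,r^{-2}$ throughout, provided the dimensional constant $c$ is chosen so small that $\tfrac{c}{K_1}t_i\le c(n)^2/(k\,r^{-2})$. On the unmodified region $\bar g$ restricts to a Ricci flow with the same initial data as $g(t_i)$, so Shi's \emph{local interior} estimates combined with the regularity-scale propagation of Appendix B recover, on a slightly smaller set, bounds of the exact form $\normes{\riem\bar g(t)}<\tfrac{K_1}{t}$ and $\inj{\bar g(t)}>\sqrt{t/K_1}$ (in the relatively compact sense). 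We then set $g:=\bar g$ on the slab over
\[
U_{i+1}:=\big\{x\in\tilde U\ :\ B_{g_0}(x,C\sqrt{K_1 t_i})\rcsubset\tilde U,\ \text{and both bounds above hold strictly on }B_{g_0}(x,C\sqrt{K_1 t_i})\text{ for }t\in[t_i,t_{i+1}]\big\},
\]
which is open and contained in $U_i$. Property (ii) then holds on the new slab by construction; the old and new slabs are Ricci flows whose metrics agree at time $t_i$ on $U_{i+1}$ (using $\tilde g\equiv g(t_i)$ there), so the left and right $t$-derivatives of $g$ at $t_i$ both equal $-2\ricci g(t_i)$, whence $g$ is $C^1$ in time across the seam and, by parabolic regularity, a smooth solution of the Ricci flow on $\calD$.

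\textbf{Maximality (iii) and the crux.} Take the constant $C$ of (iii) larger than the one above, and let $x\in U_i$ be such that $B_{g_0}(x,C\sqrt{K_1 t_i})\rcsubset U_i$ and $\normes{\riem g(t_i)}<\tfrac{K_0}{t_i}$, $\inj{g(t_i)}>\sqrt{t_i/K_0}$ hold on that ball. Since $C\sqrt{K_1 t_i}$ dwarfs the surgery scale $r=\sqrt{t_i/K_1}$, this ball lies in the unmodified region, so $\bar g(t_i)=g(t_i)$ there. Rescaling the local picture by $K_0/t_i$, the curvature is $\le1$ on a ball of radius $\sim C\sqrt{K_0 K_1}$ while the flow runs only for the rescaled time $\tfrac{c K_0}{K_1}\le\tfrac cC\ll1$; hence Shi's local estimate keeps $\normes{\riem\bar g(t)}\le 2K_0/t_i$, which, because $t\le t_{i+1}\le2t_i$ and $K_1\ge CK_0$, is $<\tfrac{K_1}{t}$ with a factor $\le4/C$ to spare, while the regularity-scale propagation (valid for a further time $\sim t_i/K_0$, far more than the slab length $\tfrac{c}{K_1}t_i$) yields $\inj{\bar g(t)}>\sqrt{t/K_1}$ with room. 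Thus $x$ satisfies the defining conditions of $U_{i+1}$, i.e.\ $x\in U_{i+1}$. This is exactly the step that forces the quantitative hypotheses: one must choose $c$ and $k$ (dimensional) and then $C$ (dimensional) so that the composition ``surgery blow-up $\circ$ Shi's short-time factor $\circ$ regularity-propagation loss'' still returns the \emph{clean} bounds of (ii) with enough slack for every $\sqrt{t_i/K_0}$-controlled point to survive; the margin $C\sqrt{K_1 t_i}$ is what absorbs the surgery erosion ($\sim r$), the room demanded by Shi's local estimates and by the Appendix-B propagation, and the distortion between $g_0$- and $g(t_i)$-distances (Corollary \ref{lemme_distorsion_distances}). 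I expect this bookkeeping to be the main obstacle.

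\textbf{Base case and limit.} For each $N\ge1$, run the induction from the index $-N-1$ upward after \emph{declaring} $g^{(N)}(t_{-N-1}):=g_0$ on $W_N:=\{x\in M:\ g_0\text{ has controlled geometry at scale }\sqrt{t_{-N-1}/K_1}\text{ on a relatively compact neighbourhood of }x\}$: this metric satisfies the inductive hypothesis at that index, and $W_N\uparrow M$ as $N\to\infty$ because every point of a smooth Riemannian manifold has locally bounded geometry. For $N'>N$ the initial data of $g^{(N)}$ and $g^{(N')}$ at time $t_{-N-1}$ are $g_0$ and $g_0$ evolved by Ricci flow for the time $t_{-N-1}-t_{-N'-1}\to0$; by local stability of Ricci flow with bounded curvature (Shi's interior estimates, the only tool compatible with the infinite propagation speed of the flow) the $g^{(N)}$ form a Cauchy sequence in $C^\infty_{\mathrm{loc}}$ on each slab, converging to a smooth Ricci flow $g$ with $g(0)=\lim_N g^{(N)}(t_{-N-1})=g_0$. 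Putting $U_i:=\bigcup_N U_i^{(N)}$ yields the exhaustion in (i), with $\bigcup_{i\le0}U_i=M$ since each point lies in some $W_N=U_{-N-1}^{(N)}$, and properties (ii)--(iii) pass to the limit.
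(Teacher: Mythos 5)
Your inductive step is essentially the paper's finite construction (Proposition \ref{proposition_finite_piecewise_flow}): complete $g(t_i)$ near the boundary with Lemma \ref{lemme_chirurgie}, run Shi's theorem \ref{theorem_shi} for the time $\frac{c}{K_1}t_i$, and check maximality via the non-doubling estimates of Appendix B together with the distance distortion of Corollary \ref{lemme_distorsion_distances}. One overstatement there: from the inductive $K_1$-controlled geometry at $t_i$ alone, interior estimates only return something like $\normes{\riem \bar g(t)}\le 4K_1/t_i$ on the new slab, which is \emph{not} $\le K_1/t$ for $t$ just above $t_i$; your construction survives this only because you define $U_{i+1}$ as the set where the clean bounds happen to survive (the paper instead restricts each continuation to the region where the stronger $K_0$-bounds hold, which is what makes the clean $K_1$-bounds automatic on the new slab). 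This is a bookkeeping difference, not a gap.

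The genuine gap is the base-case-and-limit step. There is no ``local stability of Ricci flow with bounded curvature'' that makes the truncated flows $g^{(N)}$ a Cauchy sequence in $\calC^\infty_{\mathrm{loc}}$: the flows are non-complete, no boundary condition is imposed, and the conformal completions of Lemma \ref{lemme_chirurgie} are performed on different regions for different $N$ and affect the interior instantly --- the infinite propagation speed you invoke is precisely the obstruction, and Shi's interior estimates give a priori bounds, not uniqueness or stability. The paper never claims convergence of the whole family; it extracts a \emph{subsequential} limit by Arzel\`a--Ascoli, using the uniform curvature bounds of Proposition \ref{misc_0} plus local Shi derivative estimates, and this forces two steps that are missing from your argument and cannot be skipped. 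First, the limit flow exists only at points lying, with a definite $g_0$-margin $C_1\sqrt{K_1 t_i}$, in $U_i^{(N)}$ for all sufficiently negative truncations --- a liminf-type domain, not your union $U_i=\bigcup_N U_i^{(N)}$, on which no limit flow is guaranteed; your union definition makes (i) trivial but only on sets where you have no flow. Exhausting $M$ by the correctly defined limit domains is the quantitative heart of the paper's proof (its step (ii)): one must combine the maximality property with Corollary \ref{misc_1} and distance distortion to show that on a compact $\calK$ with $g_0$-geometry controlled at scale $r_\calK$ the domains erode by at most $a\sqrt{t_i}$ per step, whence $U_i^\infty\supset(\calK)_{A\sqrt{t_i}}$ for $t_i\le\epsilon_\calK^2$; nothing in your proposal plays this role. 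Second, attaining the initial condition $g(0)=g_0$ in $\calC^m_{\mathrm{loc}}$ requires curvature and derivative bounds near $t=0$ that are uniform in the truncation parameter (Proposition \ref{misc_0} again, then integration in time); asserting $g(0)=\lim_N g^{(N)}(t_{-N-1})=g_0$ does not substitute for this.
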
 
 
 As a straightforward consequence of the above, we get the
 
 \begin{proposition} \label{proposition_partial_flow}
  There exists constants $c(n),C(n)$ with the property that, for any choice of $K_0 \geq 1$, $K_1 \geq C(n)K_0$ and any Riemannian manifold $(M,g_0)$, there exists a partial flow of initial data $g_0$ and parameters $K_0$, $K_1$.
 \end{proposition}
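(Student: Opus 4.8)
Recall that a \emph{partial flow of $g_0$ with parameters $(K_0,K_1)$} is a smooth Ricci flow on an open domain $\calD\subset M\times[0,1]$ with $\calD_0=M$ and $\calD_s\supset\calD_t$ for $s\le t$, having controlled geometry at the a priori scale $\sqrt{t/K_1}$ on each slice $\calD_t$, and satisfying the enlargement property: for every $0<\tau\le1$ and every open $U\subset\calD_\tau$ on which $g(t)$ has controlled geometry at the enhanced scale $\sqrt{t/K_0}$ for all $0<t\le\tau$, one has $(U)_{\Delta\rho}\times[\tau,\min(\tau+\Delta\tau,1)]\subset\calD$ for $\Delta\rho,\Delta\tau$ of the orders $\sqrt{K_1\tau}$ and $\tau/K_1$ dictated by the construction (in the normalisation $K_1=C(n)^2K_0$ of Theorem~\ref{theorem_partial_flow} these become $C(n)\sqrt{K_0\tau}$ and $\tau/(C(n)^2K_0)$). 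The plan is simply to run Proposition~\ref{proposition_piecewise_flow} with $T=1$ and the given $K_0$, $K_1\ge C(n)K_0$, and to check that the pair $(\calD,g)$ it returns is such a partial flow; the constant $C(n)$ of the present statement is the one of Proposition~\ref{proposition_piecewise_flow}, enlarged if necessary for the bookkeeping below.

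The shape of $\calD$ and the a priori estimates are read off at once. On $(t_{i-1},t_i]$ the slice $\calD_t$ equals $U_i$, and the $U_i$ decrease with $i$, so $\calD_s\supset\calD_t$ for $s\le t$; $\calD_0=M$ by construction; and $\calD$ is open because the $U_i$ are open, $\bigcup_iU_i=M$ and $t_i\to0$ (once $x\in U_i$, $U_i\times[0,t_i)\subset\calD$ is a neighbourhood of $(x,0)$, and away from $t=0$ openness follows from that of the $U_i$ and the gluing of consecutive slabs in \ref{proposition_piecewise_flow}). The estimates of \ref{proposition_piecewise_flow}(ii) amount, slice by slice, to ``$g(t)$ has controlled geometry at scale $\sqrt{t/K_1}$ on $\calD_t$'', since a single--slice ball is relatively compact in $\calD_t$ precisely when $B_{g(t)}(x,r)\times\{t\}$ is relatively compact in $\calD$.

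The content is the enlargement property, which I would obtain from the maximality clause \ref{proposition_piecewise_flow}(iii) applied at every grid time $\le\tau$. Let $i_\ast$ be the largest index with $t_{i_\ast}\le\tau$ and fix $i\le i_\ast$. As $t_i\le\tau$, $U$ has controlled geometry at scale $\sqrt{t_i/K_0}$, so for $x$ with $B_{g_0}(x,R_i)\rcsubset U$, where $R_i:=c(n)\sqrt{K_1t_i}$ is chosen generously, every $y\in B_{g_0}(x,C(n)\sqrt{K_1t_i})$ lies in $U\subset\calD_\tau=U_{i_\ast}\subset U_i$, whence $B_{g_0}(x,C(n)\sqrt{K_1t_i})\rcsubset U_i$; moreover $|\riem g(y,t_i)|\le K_0/t_i$, and the distance comparison of Corollary~\ref{lemme_distorsion_distances} (using the a priori bound $|\riem g|\le K_1/t$ on $U$, and $\sqrt{t_i/K_0}\ll\sqrt{K_1t_i}$) makes $B_{g(t_i)}(y,\sqrt{t_i/K_0})\rcsubset U$, so $\inj{g(t_i)}y\ge\sqrt{t_i/K_0}$. (The strict pointwise bounds \ref{proposition_piecewise_flow}(iii) asks for are obtained by using the controlled--geometry hypothesis at a scale just below $\sqrt{t_i/K_0}$.) Hence $x\in U_{i+1}$, i.e.\ $(U)_{R_i}\subset U_{i+1}$; since $R_i$ grows with $i$ this gives $(U)_{R_{i_\ast}}\subset U_{i+1}$ for all $i\le i_\ast$, so
\[ (U)_{R_{i_\ast}}\times[0,t_{i_\ast+1}]\ \subset\ M\times\{0\}\ \cup\ \bigcup_{i\le i_\ast}U_{i+1}\times[t_i,t_{i+1}]\ \subset\ \calD . \]
Since $R_{i_\ast}\le c(n)\sqrt{K_1\tau}$ and $t_{i_\ast}\ge\tau/2$, one has $t_{i_\ast+1}-t_{i_\ast}=(c(n)/K_1)t_{i_\ast}\ge c(n)\tau/(2K_1)$, and these two bounds furnish $\Delta\rho$ and $\Delta\tau$ of the advertised orders.

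The step I expect to require the most care is this last matching, i.e.\ the time--discretisation: because $\tau$ need not be a grid time, $[\tau,\tau+\Delta\tau]$ can spill slightly past $t_{i_\ast+1}$, and covering that spill calls for one further application of \ref{proposition_piecewise_flow}(iii), at level $i_\ast+1$. Making this precise---choosing $C(n)$ large enough that $\Delta\tau$ never exceeds a single grid step, carrying the controlled--geometry condition the short distance from $\tau$ up to $t_{i_\ast+1}$, and tuning $c(n),C(n)$ so that $\Delta\rho,\Delta\tau$ come out exactly as prescribed---is the only real chore; all the genuine analysis (Shi's theorem, the conformal ``push the boundary to infinity'' surgery of \S\ref{section_partial_flow}, and the inductive extension) is already packaged inside Proposition~\ref{proposition_piecewise_flow}.
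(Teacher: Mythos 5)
Your reduction to Proposition \ref{proposition_piecewise_flow} (with $T=1$), and your verification of the shape of $\calD$ and of the a priori $\sqrt{t/K_1}$-scale estimates, coincide with the paper's route and are fine. The gap is in the enlargement property, exactly at the point you set aside as a bookkeeping chore. Your applications of the maximality clause (iii) at grid times $t_i\leq\tau$ are essentially redundant ($U\subset\calD_\tau$ already places $U$ inside the relevant $U_i$'s); the whole content is the single application you defer, at the first grid time after $\tau$ (your $t_{i_\ast+1}$, the paper's $t_i$ with $t_{i-1}<\tau\leq t_i$). That clause demands the enhanced bounds $\normes{\riem g(y,t_i)}<K_0/t_i$ and $\inj{g(t_i)} y>\sqrt{t_i/K_0}$ \emph{at time} $t_i>\tau$, whereas the hypothesis of Theorem \ref{theorem_partial_flow}(iii) gives the $\sqrt{t/K_0}$-scale control only for $t\leq\tau$; past $\tau$ you only have the much coarser a priori scale $\sqrt{t/K_1}$, and since $K_1\gg K_0$ no choice of ``a scale just below $\sqrt{t_i/K_0}$'' can bridge that. ``Carrying the controlled-geometry condition the short distance from $\tau$ up to $t_{i_\ast+1}$'' is therefore not discretisation bookkeeping but the analytic heart of the proof.

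The paper does this propagation by rescaling parabolically at scale $K_0/\tau$, i.e. $h(x,s)=\frac{K_0}{\tau}g(x,\frac{\tau}{K_0}s+\tau)$, and applying the non-doubling-time estimate, Corollary \ref{misc_1} (Chen's local curvature estimate plus injectivity control), together with the distance comparison of Corollary \ref{lemme_distorsion_distances} to keep the relevant balls relatively compact. This is precisely where the hypothesis $K_1\geq C(n)K_0$ with $C(n)$ large is used: one grid step $t_i-\tau\leq \frac{c}{K_1}\tau$ becomes, after rescaling, a time $s\leq cK_0/K_1$ shorter than the non-doubling time $\epsilon(n)^2$. Moreover the propagated estimates degrade (the curvature bound becomes $4K_0/t_i$, the injectivity radius $\sqrt{t_i/2K_0}$), which is why the paper runs Proposition \ref{proposition_piecewise_flow} with parameter $4K_0$ in place of $K_0$ (and $K_1=4C_0K_0$), so that the degraded bounds still beat the strict thresholds of the maximality clause. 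Neither the invocation of Corollary \ref{misc_1} nor this adjustment of parameters appears in your plan, so as written the proposal does not close the proof of property (iii).
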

 
  \begin{remark}
   Let us stress again that the proposition above gives no control on the evolution of the domain $ \calD_t = \calD \cap M \times \{ t \}$ on which the metric $g(t)$ is defined at time $t$. We know that $\calD$ is open, non-empty 
   and contains the initial time slice $M \times \{ 0\}$, but in general, $\calD_t$ can ``evaporate'' arbitrarily fast. Correspondingly, by proposition $\ref{proposition_piecewise_flow}$, we only know that the $U_i$ eventually 
   become non-empty when $i \rightarrow - \infty$ - they can be empty from an arbitrarily negative index $i$ (corresponding to arbitrarily small time $t_i$).
 \end{remark} 

 The flow described in proposition $\ref{proposition_piecewise_flow}$ is in turn obtained as the limit of the following finite constructions
 
 \begin{proposition} \label{proposition_finite_piecewise_flow}
  There exists $C(n), c(n) >0$ with the following property. Let $(M^n,g_0)$ be a Riemannian manifold (not necessarily complete), $T>0$, $K_0 \geq 1$, $K_1 \geq C K_0$ and $\calK \subset M$ compact. Then there exists $k \leq 0$ 
  such that, for the finite sequence of times
  \[ t_{k-1} < \dots < t_i < \dots < t_{-1} < t_0 = T \]
  defined by $t_i=\left(1+\frac{c}{K_1} \right)^i T$ for $k-1 \leq i \leq 0$, there exist
  
  \bigskip
  
  \begin{itemize}
   \item[(i)] A finite sequence 
   \[ V_k \supset \dots \supset V_i \supset \dots \supset V_{-1} \supset V_0, \]
   as well as an inner sequence
   \[ U_k \supset \dots \supset U_i \supset \dots \supset U_{-1} \supset U_0, \]
   of open sets of $M$ with $U_i \subset V_i$ for $k \leq i \leq 0$, and $U_k \supset \calK$,
   
   \bigskip
   
   \item[(ii)] For any $k \leq i \leq 0$ a complete Ricci flow $g_i$ defined on $V_i \times [t_{i-1},t_i]$ satisfying the estimates 
   \[ \begin{array}{cc} 
    \normes{ \riem g_i(x,t) } \leq \frac{K_1}{t}, & \inj{x} g_i(t) \geq \sqrt{\frac{t}{K_1}},
   \end{array} \]
   for all $(x,t) \in V_i \times [t_{i-1},t_i]$, as well as the gluing conditions
   \[ g_{i-1}(x,t_{i-1}) = g_{i}(x,t_{i-1}) \]
   for $k+1 \leq i \leq 0$, $x \in U_i$, and the initial condition $g_k(x,t_{k-1})=g_0(x)$ on $U_k$.
  \end{itemize}
  
  \bigskip
  
  Here also, the construction possesses a ``maximality'' property
  
  \bigskip
  
  \begin{itemize}
   \item[(iii)] For every $k \leq i < 0$, if $x \in U_i$ is such that $B_{g_0}(x, C \sqrt{ K_1 t_i })$ is relatively compact in $U_i$ and if for any $y \in B_{g_0}(x, C \sqrt{ K_1 t_i })$ one has
   \[ \begin{array}{cc} 
    \normes{ \riem g_i(y,t_i) } < \frac{K_0}{t_i}, & \inj{g_i(t_i)} y > \sqrt{ \frac{t_i}{K_0} },
   \end{array} \]
   then $x \in U_{i+1}$.
  \end{itemize}

 \end{proposition}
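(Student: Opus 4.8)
I would prove the proposition by a finite induction on the index $i$, building the flows $g_i$ on the successive time-slabs $V_i\times[t_{i-1},t_i]$ from $i=k$ up to $i=0$ --- so the slab $[t_{k-1},t_k]$ is constructed first and $[t_{-1},t_0]$ last --- where the lower cutoff $k\le 0$ is fixed only at the very end (and will in general be very negative, depending on the geometry of $g_0$ near $\calK$). At the start of each stage one has in hand the end-metric $g_i(t_i)$ of the previous slab, living on $V_i$ and of controlled geometry there at scale $\sqrt{t_i/K_1}$; one renders it \emph{complete} by the conformal ``push the boundary to infinity'' construction of Lemma~\ref{lemme_chirurgie} (after first smoothing the relevant truncated distance function with Lemma~\ref{lemme_regularisation}), so that Shi's existence theorem applies, and one lets $g_{i+1}$ be the resulting Shi flow on the next slab $[t_i,t_{i+1}]$. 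The domain $V_{i+1}$ is then \emph{defined} as the locus on which $g_{i+1}$ satisfies the full estimate (ii) throughout the slab, and $U_{i+1}$ as a suitable sub-region; the maximality property (iii) will hold essentially by the choice of $U_{i+1}$.

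\textbf{Initialization.} Since $\calK$ is compact, fix a relatively compact open neighbourhood $\calK'\supset\calK$ in $M$, on which $\normes{\riem g_0}\le\Lambda_0$ and $\inj{g_0}x\ge\iota_0$ for suitable constants. Choose $k$ so negative that $t_{k-1}<\min(K_0/\Lambda_0,\,K_0\iota_0^{\,2})$; then $g_0$ has, on $\calK'$, controlled geometry at the \emph{enhanced} scale $\sqrt{t_{k-1}/K_0}$. Smoothing the truncated distance to $M\setminus\calK'$ and applying Lemma~\ref{lemme_chirurgie} yields a complete metric $\bar h$, with bounded curvature, on a domain $W_k$ containing an interior region of $\calK'$ and coinciding with $g_0$ on a slightly smaller region. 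Shi's theorem flows $\bar h$ over $[t_{k-1},t_k]$: since this slab has length $\tfrac{c}{K_1}t_{k-1}$, choosing the dimensional constant $c$ small relative to the curvature bound attached to $\bar h$ by Lemma~\ref{lemme_chirurgie} ensures that Shi's existence time is not exhausted. Call this flow $g_k$, and let $V_k$ be the locus where (ii) holds on the whole slab. On the region where $\bar h=g_0$, the regularity-scale control recalled in Appendix~B (Corollary~\ref{misc_2}) upgrades the enhanced control at time $t_{k-1}$ into control at scale $\sqrt{t/K_1}$ over $[t_{k-1},t_k]$ --- this is exactly where the requirement $K_1\ge C(n)K_0$ enters --- so that region lies in $V_k$; taking $\calK'$ large enough that $\calK$ sits well inside it, choose an open $U_k$ with $\calK\subset U_k\subset V_k$ inside that region.

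\textbf{Inductive step and conclusion.} Given $g_i$ on $V_i\times[t_{i-1},t_i]$ satisfying (ii), rescale $g_i(t_i)$ to unit scale, smooth, and push the boundary of $V_i$ to infinity with Lemmas~\ref{lemme_regularisation} and~\ref{lemme_chirurgie}, arranging (via the margin $C\sqrt{K_1 t_i}$ built into the definition of $U_i$ below) that the conformal modification is the identity on a neighbourhood of $U_i$; then flow the resulting complete metric by Shi over $[t_i,t_{i+1}]$ to obtain $g_{i+1}$. By construction $g_{i+1}(\cdot,t_i)=g_i(\cdot,t_i)$ near $U_i$, which gives the gluing condition; set $V_{i+1}$ to be the locus where (ii) holds on the whole slab, and observe that on the region where no surgery took place $g_{i+1}(t_i)=g_i(t_i)$ still has controlled geometry at scale $\sqrt{t_i/K_1}$, so Corollary~\ref{misc_2} propagates it over $[t_i,t_{i+1}]$ on a slightly smaller region, which therefore lies in $V_{i+1}$. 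Finally \emph{define} $U_{i+1}:=V_{i+1}\cap\{x\in U_i : B_{g_0}(x,C\sqrt{K_1 t_i})\rcsubset U_i \text{ and } g_i(t_i) \text{ has controlled geometry at scale } \sqrt{t_i/K_0} \text{ on that ball}\}$, which makes (iii) a tautology. The nestings $V_{i+1}\subset V_i$, $U_{i+1}\subset U_i$, $U_{i+1}\subset V_{i+1}$ are then immediate, the curvature and injectivity-radius bounds of (ii) hold on $V_{i+1}$ by its definition (the injectivity-radius bound being part of what Corollary~\ref{misc_2} propagates), and iterating from $i=k$ up to $i=0$ with $t_i=(1+c/K_1)^iT$ finishes the construction.

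\textbf{Main obstacle.} The delicate point is the bookkeeping of scales across a single step. One must push the conformal-surgery collar far enough out --- quantified by the distance $C\sqrt{K_1 t_i}$, which must also absorb the distance distortion $\sim\sqrt{K_1\,\Delta t_i}$ produced by the flow itself --- that the part of $V_i$ one intends to keep controlled is untouched by the modification and so genuinely inherits controlled geometry; and one must verify that a single passage through surgery $+$ Shi $+$ Corollary~\ref{misc_2} degrades control at scale $\sqrt{t/K_0}$ only into control at the recorded scale $\sqrt{t/K_1}$, i.e.\ costs only a dimensional factor, which is precisely what forces $K_1\ge C(n)K_0$ (and pins down $C(n)$). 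The only other thing to watch is that Shi's existence time for the (highly curved but bounded-curvature) modified metric covers the whole slab of length $(c/K_1)t_i$, which is arranged by taking $c=c(n)$ small in terms of the curvature bound furnished by Lemma~\ref{lemme_chirurgie}.
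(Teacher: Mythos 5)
There is a genuine gap in your inductive step, and it is exactly at the point your ``Main obstacle'' paragraph gestures at. You push the boundary of $V_i$ to infinity using only the $\sqrt{t_i/K_1}$-scale control that $g_i(t_i)$ has on $V_i$. But Lemma $\ref{lemme_chirurgie}$ \emph{degrades} the geometry: after rescaling to unit scale it produces a complete metric with curvature as large as $k>C(n)$ times the input bound on the collar, i.e.\ of order $kK_1/t_i$, and Shi's theorem then only guarantees $\normes{\riem}\leq 2kK_1/t_i$ on the next slab. So near the collar the estimate $\normes{\riem g_{i+1}}\leq K_1/t$ of (ii) is simply false. You cannot repair this by declaring $V_{i+1}$ to be ``the locus where (ii) holds'': (ii) also requires $g_{i+1}$ to be a \emph{complete} flow on $V_{i+1}$, and the restriction of the Shi flow to a proper sub-locus of the surgered domain is not complete. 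Nor does Corollary $\ref{misc_2}$ rescue the unmodified region: pseudo-locality propagates unit-scale control only up to fixed constant factors (roughly $4K_1$ instead of $K_1$), and since the number of slabs between $k$ and $0$ is unbounded, a per-step loss of a constant factor destroys the uniform $K_1$ of the statement. Finally, with your definition $U_{i+1}:=V_{i+1}\cap\{\dots\}$, property (iii) is \emph{not} a tautology: you must still prove that a point whose $g_0$-ball of radius $C\sqrt{K_1t_i}$ carries the enhanced estimates actually lies in $V_{i+1}$, which requires both the distance-distortion lemma $\ref{lemme_distorsion_distances}$ (to compare $g_0$-balls with $g_i(t_i)$-balls) and a proof that (ii) survives on the whole next slab there.

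The paper's proof fixes all of this by one structural move you are missing: \emph{before} the conformal completion it restricts to the region $\tV_{i+1}\subset U_i$ where $g_i(t_i)$ already has the \emph{enhanced} control $\normes{\riem}<K_0/t_i$, $\inj{}\,>\sqrt{t_i/K_0}$ (this region may be empty, in which case the construction simply stops --- no propagation of enhanced control is claimed or needed in this proposition). It then applies Lemma $\ref{lemme_chirurgie}$ to that region, rescaled at the $K_0$-scale, with surgery parameter $K_1/(16K_0)$; the modified complete metric thus has $\normes{\riem}\leq K_1/(16t_i)$ and $\inj{}\,\geq 4\sqrt{t_i/K_1}$ \emph{everywhere}, so a single application of Shi's theorem gives the (ii) estimates, with the fixed $K_1$, on the entire new complete domain $V_{i+1}$ --- collar included --- with no recourse to pseudo-locality and no per-step degradation. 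The margin $K_1\geq C(n)K_0$ is consumed exactly here. Property (iii) then follows because any point whose $g_0$-ball of radius $C\sqrt{K_1t_i}$ lies in $U_i$ and carries the enhanced estimates sits, after accounting for distance contraction via Lemma $\ref{lemme_distorsion_distances}$, deep enough inside $\tV_{i+1}$ to land in the unmodified inner region $U_{i+1}$ on which $g_{i+1}(t_i)=g_i(t_i)$. Your overall architecture (forward induction on slabs, smoothing plus push-to-infinity plus Shi, gluing on inner regions, margins of size $C\sqrt{K_1t_i}$) matches the paper, but without the restriction to the enhanced-control region before surgery the estimates (ii), the completeness requirement, and the verification of (iii) all fail as written.
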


 The proof of proposition $\ref{proposition_finite_piecewise_flow}$ essentially consists in the repeated application of Shi's theorem $\ref{theorem_shi}$ combined with appropriate modification of the metric at times $t_i$ as 
 allowed by lemma $\ref{lemme_chirurgie}$. Recall that
 
 \begin{theorem}[Shi's existence theorem \cite{Shi_1989}] \label{theorem_shi}
  There exists a constant $c(n)>0$ with the following property. Let $(M^n,g_0)$ be a complete Riemannian manifold with $\normes{ \riem g_0 } \leq 1$ on $M$. Then there exists a complete Ricci flow $g(t)$ $M \times [0, c^2]$, with
  $\normes{ \riem g(x,t) } \leq 2$ for any $(x,t) \in M \times [0,c^2]$. Additionally, $c(n)$ can be chosen such that if $\inj{g_0}(x) \geq 1$ for every $x \in M$, then $\inj{g(t)}(x) \geq \frac{1}{4}$ for any $(x,t) \in M \times 
  [0,c^2]$.
 \end{theorem}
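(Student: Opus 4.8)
\emph{Proof proposal.} The plan is to follow Shi's original strategy: convert the (only weakly parabolic) Ricci flow equation into a genuinely parabolic quasilinear system by the DeTurck trick, solve that system on an exhaustion of $M$ by relatively compact domains with boundary, extract a $C^\infty_{loc}$ limit using \emph{interior} a priori estimates that depend only on $n$ and the initial bound, and finally undo the DeTurck modification by pulling back along a globally defined time-dependent family of diffeomorphisms. Concretely, fix the background metric $\bar g=g_0$ and consider the Ricci--DeTurck flow $\del_t g = -2\ricci g + \mathcal{L}_{W(g,\bar g)}g$, where $W$ is built from the difference of the Levi-Civita connections of $g$ and $\bar g$; this system is strictly parabolic with initial data $g_0$. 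Exhaust $M$ by $\Omega_j$ with smooth boundary, $\overline{\Omega_j}\subset\Omega_{j+1}$, $\bigcup_j\Omega_j=M$, and solve the Dirichlet problem $g|_{\bound\Omega_j}=g_0|_{\bound\Omega_j}$ on $\Omega_j\times[0,\tau_j]$ by standard parabolic theory.

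The core is the a priori curvature estimate. Starting from the evolution inequality $\del_t\normes{\riem g}^2\leq \Delta\normes{\riem g}^2 - 2\normes{\nabla\riem g}^2 + C(n)\normes{\riem g}^3$ and a cutoff function supported away from $\bound\Omega_j$, a localized maximum-principle argument à la Shi shows that on a fixed slab $M\times[0,c(n)^2]$ (in particular $\tau_j\geq c(n)^2$) one has $\normes{\riem g(x,t)}\leq 2$. With the $C^0$ curvature bound in hand, the Bernstein--Shi derivative estimates give $\normes{\nabla^m\riem g(x,t)}\leq C(n,m)\,t^{-m/2}$ for $t\in(0,c(n)^2]$, again uniformly in $j$ since they are interior. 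Parabolic Schauder theory then provides uniform local $C^\infty$ control, so a subsequence of the $g_j$ converges in $C^\infty_{loc}$ to a solution $g(t)$ of the Ricci--DeTurck flow on $M\times[0,c(n)^2]$ with $\normes{\riem g}\leq 2$. Since $\normes{\del_t g}=\normes{2\ricci g - \mathcal{L}_W g}$ is controlled by the curvature and its first derivative, after shrinking $c(n)$ one gets $\tfrac12 g_0\leq g(t)\leq 2g_0$, so $g(t)$ stays complete. Solving $\del_t\phi_t=-W(g(t),\bar g)\circ\phi_t$, $\phi_0=\id$ --- which admits a complete global solution because $W$ and its derivative are bounded --- and setting $g(t):=\phi_t^*g(t)$ produces the desired complete Ricci flow, still with $\normes{\riem}\leq 2$ after a final adjustment of $c(n)$.

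For the injectivity radius statement, assume in addition $\inj{g_0}x\geq 1$ for all $x$. Because $\tfrac12 g_0\leq g(t)\leq 2g_0$ on $[0,c(n)^2]$, $g(t)$-distances are comparable to $g_0$-distances up to a factor $2$; because $\del_t\dd\vol{g(t)}=-\scal g(t)\,\dd\vol{g(t)}$ with $\normes{\scal g(t)}\leq C(n)$, volume elements change by at most a bounded factor. Bishop--Gromov at $t=0$ (using $\normes{\riem g_0}\leq 1$ and $\inj{g_0}x\geq 1$, which give $\vol{g_0}B_{g_0}(x,r)\geq c(n)r^n$ for $r\leq 1$) then yields $\vol{g(t)}B_{g(t)}(x,r)\geq c(n)r^n$ for $r$ below some $r_0(n)$. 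Combined with $\normes{\riem g(t)}\leq 2$, the Cheeger--Gromov--Taylor injectivity radius estimate (the result recalled in footnote \ref{Cheeger_Gromov_Taylor}) gives $\inj{g(t)}x\geq \iota(n)>0$; shrinking $c(n)$ once more so the distortions above are sufficiently close to $1$, one arranges $\inj{g(t)}x\geq \tfrac14$.

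\textbf{Main obstacle.} The delicate point is the non-compact case: all a priori estimates --- the $C^0$ curvature bound and the Bernstein--Shi derivative bounds --- must be established as \emph{interior} estimates depending only on $n$ and on $\normes{\riem g_0}\leq 1$, and not on $\Omega_j$ or its boundary data, so that the limit $j\to\infty$ can be taken and the slab of existence does not shrink to zero. This is precisely Shi's localized maximum-principle machinery with distance-to-boundary cutoffs, and it is the technically heaviest ingredient; a secondary (but routine) point is verifying that the DeTurck vector field is complete, so that $\phi_t$ is globally defined on $M$ and the conversion back to Ricci flow preserves completeness.
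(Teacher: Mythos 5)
The paper never proves Theorem \ref{theorem_shi}: it is imported as a black box from \cite{Shi_1989}, so there is no internal argument to compare with. Your sketch of the existence part (Ricci--DeTurck gauge with background $g_0$, Dirichlet problems on an exhaustion, Shi's localized maximum-principle bound $\normes{\riem g}\leq 2$ on a slab of dimensional size, Bernstein--Shi interior derivative estimates, a $\calC^\infty_{loc}$ limit, and reconversion to Ricci flow by integrating the DeTurck vector field) is precisely the strategy of the cited source, with the genuinely hard ingredient --- the interior a priori estimates independent of the exhausting domain --- named rather than carried out, which is acceptable for an outline.

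There is, however, a genuine gap in your proof of the injectivity radius assertion, i.e.\ of the explicit bound $\inj{g(t)} x \geq \frac14$. From the metric equivalence, the volume lower bound at time $0$ (which, incidentally, comes from G\"unther's inequality --- sectional curvature $\leq 1$ plus $\inj{g_0} x\geq 1$ --- not from Bishop--Gromov, which gives upper bounds), and $\normes{\riem g(t)}\leq 2$, Cheeger--Gromov--Taylor only yields $\inj{g(t)} x \geq \iota(n,v)>0$ for a non-explicit dimensional constant: the volume ratio $v$ you feed into it is the one inherited from $g_0$ at scale comparable to $1$, which is a definite constant bounded away from the Euclidean value (think of the round sphere of curvature $1$), and the bi-Lipschitz distortion of $g(t)$ relative to $g_0$ does not enter the Cheeger--Gromov--Taylor constant at all. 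Hence ``shrinking $c(n)$ once more so the distortions are close to $1$'' cannot upgrade $\iota(n)$ to $\frac14$; the step as written fails (a bound $\iota(n)$ would in fact suffice for the way the theorem is used in Section \ref{section_partial_flow} after renaming constants, but it does not prove the statement you were asked to prove). To obtain a lower bound comparable to the initial injectivity radius one must rule out short $g(t)$-geodesic loops directly: since $\normes{\riem g(t)}\leq 2$ forces the conjugate radius to be at least $\pi/\sqrt2$, a drop of $\inj{g(t)} x$ below $\frac14$ produces a $g(t)$-geodesic loop at $x$ of length $<\frac12$; such a loop lies deep inside $B_{g_0}(x,1)$, where $g(t)$ is $(1+\delta)$-bi-Lipschitz to $g_0$, so it can be contracted rel basepoint through loops of length at most, say, $2$, contradicting a Klingenberg-type long-homotopy/lifting argument under $\exp^{g(t)}_x$. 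Alternatively, integrate Shi's estimate $\normes{\nabla \riem g(t)}\leq C/\sqrt t$ to get uniform $\calC^1$-closeness of $g(t)$ to $g_0$ (closeness of the connections), and compare exponential maps. Either route closes the gap; the Cheeger--Gromov--Taylor step alone does not.
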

 
 \begin{proof}[Proof of proposition $\ref{proposition_finite_piecewise_flow}$]
  Define the sequence $T=t_0> ... > t_k$ as in the statement of the proposition, where $c=c(n)$ is given by theorem $\ref{theorem_shi}$, and $k<0$ is to be chosen below.
  
  \bigskip
  
  Suppose the sets $U_i, V_i$ and the flow $g_i(t)$ have been constructed up to step $i$. Consider the open domain of $M$ defined by
  \[ \tV_i = \left\{ x \in  \ U_{i-1} \  \left| \  \normes{\riem{g_{i-1}(x,t_{i-1})}} < \frac{K_0}{t_{i-1}}, \ \inj{x} g_{i-1}(t_{i-1}) >  \sqrt{ \frac{t_{i-1}}{K_0} } \ \right. \right\}. \]
  In the base case $i=k$ one replaces $U_{i-1}$ by $M$ and $g_{i-1}(t_{i-1})$ by $g_0$ in the definition of $\tV_k$. Clearly, if $k$ is chosen negative enough, one has $\calK \subset \tV_k$ by continuity. On the other hand, for 
  every $i>k$, $\tV_i$ can be empty, and the construction empty from this step on.
   
   \bigskip
   
   Since $\tV_i \subset U_{i-1}$, the metric $g_{i-1}(t_{i-1})$ is defined on $\tV_i$ and satisfies the estimates 
   \begin{equation}\label{prop_fpw_0}
    \begin{array}{cc} \normes{ \riem g_{i-1}(t_{i-1}) } < \frac{K_0}{t_{i-1}}, & \inj{x} g_{i-1}(t_{i-1}) >  \sqrt{ \frac{t_{i-1}}{K_0} } \end{array}
   \end{equation}
   for any $x \in \tV_i$ by the choice of this domain. Assume $K_1 \geq 16 C_0 K_0$, where $C_0=C(n)$ is given by lemma $\ref{lemme_chirurgie}$. Applying the lemma with $k=\frac{K_1}{16 K_0}$ to the metric $g_{i-1}(t_{i-1})$ 
   (actually, to the scaled up metric $\sqrt{ \frac{K_0}{t_{i-1}} } g_{i-1}(t_{i-1})$) provides us with a modified metric $g_{i,0}$ complete on a sub-domain $V_i \subset \tV_i$ satisfying the following properties:
   \begin{equation} \label{prop_fpw_1}
    V_i \supset \left\{ x \in \tV_i \ \left| \ d_{g_{i-1}(t_{i-1})}(x, M \setminus \tV_i) > \sqrt{ \frac{t_{i-1}}{K_0} }  \right. \right\},
   \end{equation}
   and if one defines a sub-domain of $V_i$ by
   \begin{equation} \label{prop_fpw_2}
    U_i = \left\{ x \in V_i \ \left| \ d_{g_{i-1}(t_{i-1})}(x, M \setminus V_i) > 4 \sqrt{ \frac{C_0 t_{i-1}}{K_1} }  \ \right. \right\},
   \end{equation}
   then on $U_i$ one has
   \begin{equation} \label{prop_fpw_3}
    g_{i,0} \equiv g_{i-1}(t_{i-1}).
   \end{equation}
   Moreover, the modified metric satisfies the estimates 
   \[ \begin{array}{cc} \normes{ \riem g_{i,0} } < \frac{K_1}{16 t_{i-1}}, & \inj{x} g_{i,0} >  4 \sqrt{ \frac{t_{i-1}}{K_1} }, \end{array} \]
   for every $x \in V_i$.

  \begin{center}
    \includegraphics[scale=1]{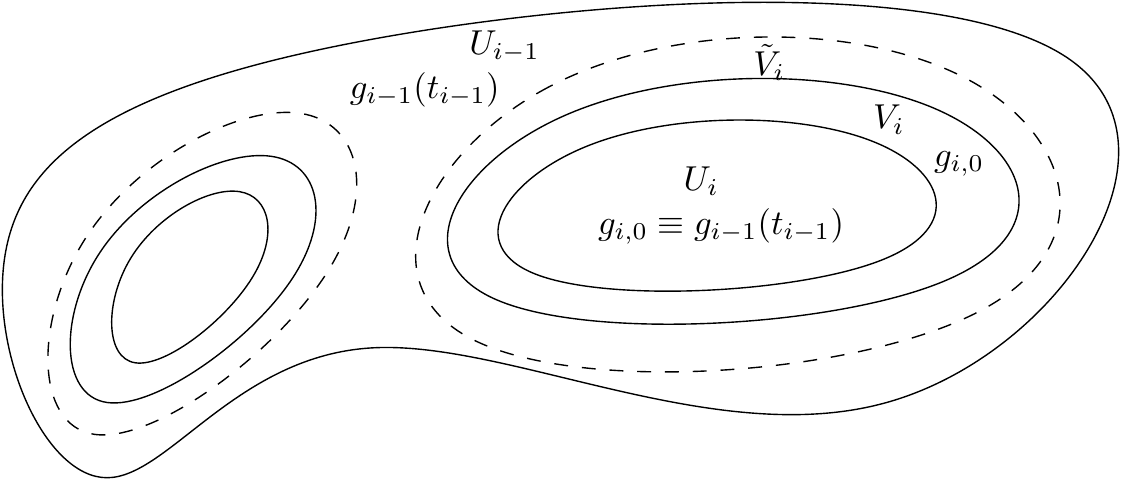}
  \end{center}
   
   Shi's existence theorem then asserts the existence a complete Ricci flow $g_i(t)$ on $V_i \times [t_{i-1},t_i]$ with initial data $g_{i,0}$, satisfying the estimates    
   \begin{equation} \label{prop_fpw_4}
    \begin{array}{cc} \normes{ \riem g_i(x,t) } < \frac{K_1}{t}, & \inj{x} g_i(x,t) >  \sqrt{ \frac{t}{K_1} }, \end{array} 
   \end{equation}   for $(x,t) \in V_i \times [t_{i-1},t_i]$ (we used $\frac{t_i}{t_{i-1}} \leq 4$).
   
   \bigskip
   
   $(\ref{prop_fpw_3})$ and $(\ref{prop_fpw_4})$ correspond to $(ii)$ in the statement. To verify part $(iii)$, we need first remark that $(\ref{prop_fpw_1})$ and $(\ref{prop_fpw_2})$ imply in particular
   \begin{equation}\label{prop_fpw_5}
    U_i \supset \left\{ x \in \tV_i \ \left| \ d_{g_{i-1}(t_{i-1})}(x, M \setminus \tV_i) >  2 \sqrt{\frac{t_{i-1}}{K_0}} \ \right. \right\},
   \end{equation}
   and then note that we have the following distances comparison
   \[ d_{g_{i-1}(t_{i-1})} (x, M \setminus \tV_i) \geq d_{g_0}(x, M \setminus \tV_i) - \frac{20}{3}(n-1) \sqrt{ K_1 t_{i-1} }. \]
   simply because, thanks to the gluing property $(\ref{prop_fpw_3})$, defining $g(x,t)$ on $U_{i-1} \times [t_{k-1},t_{i-1}]$ by $g(x,t) = g_{j}(x,t)$ if $t_{j-1} \leq t \leq t_j$ yields a smooth non-complete flow, to 
   which we can apply lemma $\ref{lemme_distorsion_distances}$. Thus $(\ref{prop_fpw_5})$ implies, in term of the metric $g_0$,
   \begin{equation}\label{prop_fpw_6}
    U_i \supset \{ \ x \in \tV_i \ | \ d_{g_0}(x, M \setminus \tV_i) >  C_1 \sqrt{ K_1 t_{i-1} } \ \}.
   \end{equation}
   for $C_1=2 +\frac{20}{3}(n-1)$ (so that $\frac{2}{\sqrt{K_0}} + \frac{20}{3} (n-1) \sqrt{K_1} \leq C_1 \sqrt{K_1}$). Now for every point $x \in M$ such that 
   $B_{g_0}(x, C_1 \sqrt{K_1 t_{i-1}}) \subset U_{i-1}$ and such that the estimates $(\ref{prop_fpw_0})$ hold on this ball, one has $B_{g_0}(x, C_1 \sqrt{K_1 t_{i-1}}) \subset \tV_i$ by definition of $\tV_i$. Thus by 
   $(\ref{prop_fpw_6})$,  $x \in U_i$. This is property $(iii)$ at rank $i-1$ where the constant of the statement is $C=\max(C_0,C_1)$.

 \end{proof} 
 
 \begin{proof}[Proof of proposition $\ref{proposition_piecewise_flow}$]
  For each value of $k \leq 0$ we consider the finite construction described in the statement of proposition $\ref{proposition_finite_piecewise_flow}$. This provides us in particular, for each value of $k \leq 0$, with two 
  decreasing sequences $U_i^k$, $V_i^k$, $k \leq i \leq 0$ of open sets with $U_i^k \subset V_i^k$, as well a sequence of flows $g_i^k$ on $V_i^k \times [t_{i-1},t_i]$ for $k \leq i \leq 0$.
 \begin{itemize}
 
  \item[\bf (i)] Let us start with some definitions. For each $k \leq 0$ one considers the space-time domain $D^k \subset M \times [0,T]$ defined by
   \[ D^k = \bigcup_{i=k}^0 U_i^k \times [t_{i-1},t_i[ \]
   on which one can define a smooth solution to the Ricci equation $g^k(x,t)$ (with non complete time slices), by setting   
   \[ g^k(x,t) = g_i^k(x,t) \]
   for $t \in [t_{i-1}, t_i]$, $x \in U_i^k$. By construction, the flow $g^k$ satisfies in particular the bounds
   \begin{equation} \label{borne_courbure_gk}
    \begin{array}{cc} \normes{\riem g^k(x,t)} \leq \frac{K_1}{t}, & \inj{g^k(t)} x \geq r , \end{array}
   \end{equation}
   for every $(x,t) \in D^k$ and $0 < r \leq \sqrt{ \frac{t}{K_1} }$ such that $B_{g^k(t)}(x,r) \times \{ t \}$ is relatively compact in $D^k$. 
   
   \bigskip
   
   Furthermore we define the corresponding ``limit'' domains:
   \begin{equation}\begin{split} \label{def_U_infty}
    U_i^\infty = \{ x \in M \ | \ \exists k_0 \leq 0  \text{ such that } & \forall k \leq k_0, \\ & B_{g_0}(x,C_1 \sqrt{K_1 t_i}) \text{ is relatively compact in } U_i^k \},
   \end{split}\end{equation}
   where $C_1=C_1(n)$ will be chosen in the course of the argument (see step {\bf (iii)}), 
   \[ D^\infty = \bigcup_{i=-\infty}^0 U_i^\infty \times [t_{i-1},t_i[, \]
   as well as
   \[ \calD^\infty = D^\infty \cup M \times \{ 0 \}. \]
   Clearly $U_i \subset U_{i-1}$, although at this stage all the $U_i$ and thus $D$ could be empty.
   
   \item[\bf (ii)] The first thing we check is that the $U_i$ do exhaust $M$ and thus that $\calD$ is a non-empty open domain of $M \times [0,T]$. For $\calK \subset M$ compact we pick $r_\calK>0$ a scale such that   
   \[ \begin{array}{cc} \normes{ \riem g_0(x) } \leq \frac{1}{r_\calK^2}, & \inj{g_0} x \geq r_\calK, \end{array} \]   
   for $x \in \calK$, and we show that there exists $A(n,K_1)$ and $\epsilon_\calK=\epsilon(r_\calK,n,K_1)$ such that   
   \begin{equation} \label{contraction_du_domaine}
    U_i^\infty \supset (\calK)_{A \sqrt{t_i}} = \{ x \in \calK, \ | \  B_{g_0}(x, A \sqrt{t_i}) \subset \calK \}, 
   \end{equation}
   as soon as $i$ is such that $t_i \leq \epsilon_\calK^2$. Clearly this implies that $\bigcup_{i=-\infty}^0 U_i^\infty = M$.
   
   \bigskip
   
   In order to prove $(\ref{contraction_du_domaine})$, we use the maximality property $(iii)$ from proposition $\ref{proposition_finite_piecewise_flow}$, together with corollary $(\ref{misc_1})$. Therefore we let $\epsilon = 
   \min(\epsilon_{\ref{misc_1}}(n,K_1), \frac{1}{2})$, and we fix $k \leq i \leq 0$. Then, for any point $x \in M$ such that $B_t(x,  \frac{\sqrt{t_i}}{\epsilon} ) \subset U_i^k \cap \calK$ for $t_{k-1} \leq t \leq t_i$, and if
   $t_i \leq r_\calK^2 \epsilon^2$, then the hypothesis of corollary $\ref{misc_1}$ are satisfied by the rescaled flow $\tg^k(s)= \frac{\epsilon^2}{t_i} g^k( \frac{t_i}{\epsilon^2} s +t_{k-1})$ for $s \in 
   [0, \epsilon^2 (1- \frac{t_{k-1}}{t_i})]$. Whence in particular the estimates at $(x,t_i)$ 
   \begin{equation} \label{non_doubling_bounds}
    \begin{array}{cc} \normes{ \riem g^k(x,t_i) } \leq \frac{K_0}{4 t_i}, & \inj{g^k(t_i)} x \geq 2 \sqrt{ \frac{t_i}{K_0} }, \end{array}
   \end{equation}
   (since $\frac{\epsilon^2}{t_i} \leq \frac{K_0}{4 t_i}$). However distance comparison lemma $\ref{lemme_distorsion_distances}$ apply to the flow $g^k(t)$ on $U_i^k \times [t_{k-1},t_i]$, and
   \[ d_t(x, M \setminus U_i) \geq d_0(x, M \setminus U_i) - \frac{20}{3} (n-1) \sqrt{K_1 t}. \]
   Thus if $x \in U_i$ is such that $B_{g_0}(x, (\frac{1}{\epsilon} + \frac{20}{3} (n-1) \sqrt{K_1} ) \sqrt{t_i}) \subset U_i^k \cap \calK $, $(\ref{non_doubling_bounds})$ hold at $x$, and if finally $x$ is such that $B_{g_0}(x, 
   a \sqrt{t_i}) \subset U_i^k \cap \calK $, where $a=a(n,K_1)=\frac{1}{\epsilon} + \frac{20}{3} (n-1) \sqrt{K_1} + C_0 \sqrt{K_1}$ and $C_0=C(n)$ comes from proposition $\ref{proposition_finite_piecewise_flow}$ then 
   $(\ref{non_doubling_bounds})$ hold on $B_{g_0}(x, C_0 \sqrt{K_1 t_i})$. Thus by property $(iii)$ of proposition $\ref{proposition_finite_piecewise_flow}$,
   \[ U_{i+1}^k \cap \calK \supset \left( U_i^k \cap \calK \right)_{a \sqrt{t_i}} \]   
   holds true as long as $t_i \leq \epsilon_\calK^2=\epsilon^2 r_\calK^2$. This in turn integrates into
   \[ U_i^k \cap \calK \supset (U_k^k \cap \calK)_{A' \sqrt{t_i}} \]
   for some $A'=A'(n,K_1)$ independent from $\calK$. Recalling that proposition $\ref{proposition_finite_piecewise_flow}$ also guaranties $U_k^k \supset \calK$ for $k$ negative enough, letting $k \rightarrow -\infty$ we get 
   $U_i^\infty \supset (\calK)_{(A' + C_1 \sqrt{K_1}) \sqrt{t_i}}$ for $i$ such that $t_i \leq \epsilon_\calK^2$, which is $(\ref{contraction_du_domaine})$ with $A=A'+C_1 \sqrt{K_1}$.
   
   \bigskip
   
   \item[\bf (iii)] We are now ready to extract a subsequence of the $g^k(t)$ which converges in $\calC^m$-norm, for any $m \geq 0$ and on every compact $\calK \times [\tau',\tau] \subset \calD^\infty$, to a smooth limit $g^\infty(t)$ defined on 
   $\calD^\infty$. So let $\calK \times [0,\tau] \subset \calD^\infty$ be fixed. The definition of the $U_i^\infty$ and a straightforward covering argument ensure that for some $k$ negative enough, one has, for every 
   $x \in \calK$, $B_{g_0}(x, C_1 \sqrt{K_1 \tau}) \subset U_i^k$ where $i \leq 0$ is such that $t_{i-1} \leq \tau < t_i$. So let us pick $x \in \calK$ and note that, by distance comparison corollary $\ref{lemme_distorsion_distances}$,
   we have, for every $t_{k-1} \leq t \leq \tau$,
   \[ d_{g^k(t)}(x, M \setminus U_i^k) \geq d_0(x, M \setminus U_i^k) - \frac{20}{3} (n-1) \sqrt{K_1 t} \]
   Thus the choice of $C_1 = 1 + \frac{20}{3}(n-1)$ in $(\ref{def_U_infty})$ guaranties in particular that $B_{g^k(t)}(x,\sqrt{K_1 \tau})$ is relatively compact in $D^k$ for $t_{k-1} \leq t \leq \tau$. Therefore we can apply 
   proposition $\ref{misc_0}$ from appendix B to deduce the existence of constant $\Lambda_{\calK,\tau}$, depending on $g_0$, $K_1$, but independent from $k$ such that
   \begin{equation} \label{bound_curvature}
    \normes{ \riem g^k(x,t) } \leq \Lambda_{\calK,\tau},
   \end{equation}
   for $x \in \calK$, $t_{k-1} \leq t \leq \tau$. This implies first, simply by integrating the Ricci flow equation, the existence of a constant $\Lambda_{0,\calK,\tau}$ such that
   \begin{equation} \label{bound_metric}
    (\Lambda_{0,\calK,\tau})^{-1} g_0(x) \leq g^k(x,t) \leq \Lambda_{0,\calK,\tau} g_0(x)
   \end{equation}
   on the same $\calK \times [t_{k-1}, \tau]$. Secondly, making use of the local version of Shi's estimates (as stated in \cite{Chow_2008}, thm 14.16) one finds, for every $m \geq 1$, a constant $\Lambda_{m,\calK,\tau}$ such that
   \begin{equation} \label{bound_derivatives}
    \normes{ \nabla^m g^k(x,t) }_{g_0} \leq \Lambda_{m,\calK,\tau}
   \end{equation}
   for $x \in K$, $t_{k-1} \leq t \leq \tau$. With this in hand, Arzela-Ascoli compactness theorem allows us to extract a limit flow $g^\infty$ defined on $D^\infty$ such that the $g^k$ converge toward $g^\infty$ in the $\calC^m$-norm
   on any compact subset of $D^\infty$.
   
   \bigskip
   
   Finally, $g^\infty$ can be extended to $\calD^\infty$ by setting $g^\infty(x,0)=g_0(x)$ for $x \in M$. To ensure that this defines an initial condition in the usual sense one needs then check that on 
   every compact $\calK \subset M$ $g^\infty(x,t)$ converges to $g_0$ in the $\calC^m$-norm for any $m \geq 0$ when $t$ tends to $0$. But this, again, is a consequence of the uniform bounds on the curvature
   tensor and its derivatives established above. Indeed, using $(\ref{bound_curvature})$, $(\ref{bound_metric})$ and integrating between $t_{k-1}$ and $t$ it is straightforward to prove that
   \[ \normes{ g^k(t)-g_0 }_{g_0} = \normes{ g^k(t)-g^k(t_{k-1}) }_{g_0} \leq \Lambda'_{\calK,\tau} t \]
   for some constant $\Lambda'_{\calK,\tau}$ independent from $k$, while $(\ref{bound_derivatives})$ together with a bit more work yields that
   \[ \normes{ \nabla^m g^k(t) - \nabla^m g_0 }_{g_0} \leq \Lambda'_{m,\calK,\tau} t. \]
   
   \item[\bf (iv)] At this stage it is clear that the construction features all the properties announced in the statement. The estimates in $(ii)$ are a consequence of $(\ref{borne_courbure_gk})$ and of 
   the smooth convergence of the $g^k$ towards the limit flow $g^\infty$. $(iii)$ is also inherited from the corresponding property in the finite construction. Consider indeed a point $x \in M$ such that
   for some rank $i\leq 0$, $B_{g_0}(x,C_2 \sqrt{K_1 t_i}) \subset U_i^\infty$, where $C_2>2 C_1+C$, $C=C(n)$ being the constant from proposition $\ref{proposition_finite_piecewise_flow}$, and such that 
   $\normes{ \riem g^\infty(x,t) } < \frac{K_0}{t_i}$, $\inj{g^\infty(t_i)} x > \sqrt{ \frac{t_i}{K_0} }$. Then $B_{g_0}(x, (2 C_1+C) \sqrt{K_1 t_i}) \subset U_i^k$ for $k$ negative enough, and by the smooth
   convergence of the $g^k$ towards $g^\infty$, $\normes{ \riem g^k(y,t_i) } < \frac{K_0}{t_i}$, $\inj{g^k(t_i)} y > \sqrt{ \frac{t_i}{K_0} }$ for every $y \in B_{g_0}(x, (2 C_1+C) \sqrt{K_1 t_i})$. By 
   property $(iii)$ of the finite construction, this implies $B_{g_0}(x, 2 C_1 \sqrt{K_1 t_i} ) \subset U_{i+1}^k$, whence $B_{g_0}(x, C_1 \sqrt{K_1 t_{i+1}}) \subset U_{i+1}^k$ (we used 
   $\frac{t_{i+1}}{t_i} \leq 2$). This in turn, implies by definition that $x \in U_{i+1}^\infty$ (thus, the $C(n)$ of the statement is obtained as $\max(C_1,C_2)$).

 \end{itemize}

 \end{proof}
 
 \begin{proof}[Proof of proposition $\ref{proposition_partial_flow}$]
  Let $K_0 \geq 1$ and $(M,g_0)$ be fixed. We consider the flow $g$ on a space time region $\calD \subset M \times [0,1]$, whose existence is guarantied by proposition $\ref{proposition_piecewise_flow}$ applied with parameters
  $4 K_0$, $K_1=4 C_0 K_0$ ($C_0=C(n)$ comes from the proposition) and $T=1$, and we show that it has indeed the properties of a partial flow of parameter $K_0$ required by the statement of $\ref{theorem_partial_flow}$. Property 
  {\it (ii)} is obvious for $C=2 C_0$, only {\it (iii)} remains to be checked.
  
  \bigskip
  
  Consider thus an open domain $U \subset M$ and $\tau>0$ such that $U \times [0,t] \subset \calD$ and such that for any $(x,t) \in U \times [0,\tau]$, 
  \begin{equation} \label{the_estimates}
   \begin{array}{cc} \normes{ \riem g(x,t) } \leq \frac{K_0}{t}, & \inj{g(t)} x \geq \sqrt{\frac{t}{K_0}}. \end{array}
  \end{equation}
  Let $i \leq 0$ be such that $t_{i-1} < \tau \leq t_i$. $(\ref{the_estimates})$ at time $t_{i-1}$ imply by property $(iii)$ of proposition $\ref{proposition_piecewise_flow}$,
  \[ (U)_{C_0 \sqrt{K_1 t_{i-1}}} \subset U_i. \]
  Now we apply proposition $\ref{misc_1}$ from Appendix B to get similar estimates at times $t_i$. Consider the rescaled flow defined by
  \[ h(x,s) = \frac{K_0}{\tau} g(x, \frac{\tau}{K_0} s + \tau) \]
  for $(x,s) \in U_i \times [0, K_0 \frac{t_i-\tau}{\tau}]$. In term of $h$, $(\ref{the_estimates})$ at time $\tau$ yields
  \[ \begin{array}{cc} \normes{ \riem h(x,0) } \leq 1, & \inj{h(0)} x \geq 1,    \end{array} \]
  for $x \in U \cap U_i$, while the $\sqrt{\frac{t}{K_1}}$-regularity scale control which holds for $g$ by assumption translates after rescaling into a $\sqrt{\frac{ s + K_0}{K_1}}$-regularity scale
  estimate at time $s$ for the metric $h(s)$. Noting that $s \leq K_0 \frac{t_i-t_{i-1}}{t_{i-1}} \leq \frac{c K_0}{K_1}$, and recalling that we chose $K_1 = 4 C_0 K_0$, this yields
  \[ \begin{array}{cc} \normes{ \riem h(x,s) } \leq \frac{4 C_0}{s}, & \inj{h(s)} x \geq \sqrt{\frac{s}{4 C_0}}. \end{array} \]
  Moreover, if one assumes that $x \in M$ is such that $B_{g_0}(x,C_1 \sqrt{K_1 \tau}) \subset U \cap U_i$ with $C_1=\frac{20}{3}(n-1)+1$ then by distance comparison one guaranties in particular that for 
  $0 \leq s \leq K_0 \frac{t_i-\tau}{\tau}$,  $B_{h(s)}(x,\frac{\tau}{K_0})$ is relatively compact $U \cap U_i$. Thus proposition $\ref{misc_1}$ provides us with $\epsilon(n)=\epsilon(n,C_0)$ such that
  \[ \begin{array}{cc} \normes{ \riem h(x,s) } \leq 4, & \inj{h(s)} x \geq \frac{1}{2},    \end{array} \]
  on $\left( U \cap U_i \right)_{C_1 \sqrt{K_1 \tau}}$ as long as $s \leq \epsilon(n)$. Since $0 \leq s \leq K_0 \frac{t_i-\tau}{\tau} \leq \frac{c}{4 C_0}$ and since one can always assume 
  $\frac{c}{4 C_0} \leq \epsilon(n)$ one deduces in term of th flow $g$ at time $t_i$
  \[ \begin{array}{cc} \normes{ \riem g(x,t_i) } < \frac{4K_0}{t_i}, & \inj{h(s)} x > \sqrt{\frac{t_i}{2 K_0}},    \end{array} \]
  on the same domain. This implies $\left( U \cap U_i \right)_{(C_1+C_0) \sqrt{K_1 t_i}} \subset U_{i+1}$ by maximality. Finally
  \[ ( U )_{(C_1+2C_0) \sqrt{K_1 t_i}} \subset U_{i+1} \]
  Thus $\calD$ contains in particular the space-time domain $(U)_{\Delta \rho} \times [\tau, \tau+\Delta \tau]$ with  $\Delta \rho = (C_1+2 C_0) \sqrt{K_1 \tau}$ et $\Delta \tau = \frac{c}{K_1}{\tau}$ (the $C(n)$ of the statement
  being finally chosen as $C_1+2 C_0$).
 \end{proof}

 \section{Proof of theorem $\ref{main_theorem}$}
 
  We are now ready to prove $\ref{main_theorem}$.
 
 \begin{proof}[Proof of theorem $\ref{main_theorem}$.]
  Recall that we consider $(M^3,g_0)$ a Riemannian manifold which need not be complete, with $\ricci g_0 \geq -1$ on $M$ and such that for all $x \in M$, $r \leq 1$ such that $B_{g_0}(x,r)$ is relatively compact in $M$, one has
  $\vol{g_0} B(x,r) \geq v_0 r^3$. The argument for proving theorem $\ref{main_theorem}$, sketched in the outline, consists in using the enhanced regularity scale estimates provided by proposition $\ref{main_proposition}$ together
  with the so-called maximality property of the partial flow to show that a partial flow of $M$ actually contains a domain of the form
  \[ \calM_{A,\epsilon^2} = \{ (x,t) \in M \times[0,\epsilon^2] \ | \ B_t(x,A\sqrt{t}) \text{ is relatively compact in } M \} \]
  for an appropriate choice of the parameter $K_0$ in theorem $\ref{theorem_partial_flow}$.
  
  \bigskip
  
  We put\footnote{recall that $\iota(n,v)$ is the constant appearing in Cheeger-Gromov-Taylor's lower bound on the injectivity radius. See note page \pageref{Cheeger_Gromov_Taylor}.} $K_0= \frac{4 K}{\iota(3,v)^2}$, 
  where $K=K(v_0)$ and $v=v(v_0)$ come from the statement of proposition $\ref{main_proposition}$, and we let $g$ be a partial flow on a domain $\calD \subset M \times [0,1]$, the existence of which is guarantied by theorem 
  $\ref{theorem_partial_flow}$ for this choice of the parameter. Then, for fixed $A>0$ (the value of which will be chosen later) we let $\tau$ be the supremum of those times such that  
  both $\calM_{A,\tau} \subset \calD$ and the estimates
  \begin{equation} \label{final_proof_estimates}
   \begin{array}{ccc} \normes{ \riem g(x,t) } \leq \frac{K}{t}, & \vol{} B_t(x,r) \geq v r^3, & \ricci g(x,t) \geq - \frac{1}{t}, \end{array}
  \end{equation}
  hold for every $(x,t) \in \calM_{A,\tau}$. We set $\epsilon=\epsilon_0(v_0, C^2 K_0)$ where $\epsilon_0$ comes from the statement of proposition $\ref{main_proposition}$, and $C=C(3)$ comes from theorem $\ref{theorem_partial_flow}$.
  We make the assumption $\tau < \epsilon^2$ and show that if the constant $A$ is chosen adequately (only depending on $v_0$) we get a contradiction, which, clearly, establishes the statement.
  
  \bigskip
  
  To this effect, let us pick $\tau^- < \tau < \tau^+$ such that $\tau = \tau^-+ \frac{1}{2} \Delta \tau^-$ and $\tau^+=\min(\epsilon^2,\tau^-+\Delta \tau^-)$, i.e. $\tau^- = (1+\frac{1}{2 C^2 K_1})^{-1} \tau$, $\tau^+ = 
  \min(\epsilon^2, (1+\frac{1}{C^2 K_1}) (1+\frac{1}{2 C^2 K_1})^{-1} \tau)$. The hypothesis on $g_0$ as well as the $C^{-1} \sqrt{ \frac{t}{K_0} }$-regularity scale control which holds by construction for the partial flow allow us 
  to apply proposition $\ref{main_proposition}$ with $K_1 = C^2 K_0$ to the domain $(M)_{A \sqrt{\tau^-}} \times [0,\tau^-] \subset \calD$. As a result, and since we assumed $\tau \leq \epsilon^2$ we get that the estimates $(\ref{final_proof_estimates})$
  hold for every $0 < t \leq \tau^-$, $x \in (M)_{(A+a) \sqrt{\tau^-}}$ ($a$ is also given by proposition $\ref{main_proposition}$) and $0< r \leq \sqrt{t}$. By the choice of $K_0$ and Cheeger-Gromov-Taylor, 
  this implies in particular a $\sqrt{ \frac{t}{K_0} }$-regularity scale control for $g(t)$ on this domain for $0 < t \leq \tau^-$. The maximality property of the partial flow then guaranties the inclusion $(M)_{(A+a') 
  \sqrt{\tau^-}} \subset \calD_{\tau^+}$ for $a'=a+C \sqrt{K_0}$.
  
  \begin{center}
   \includegraphics{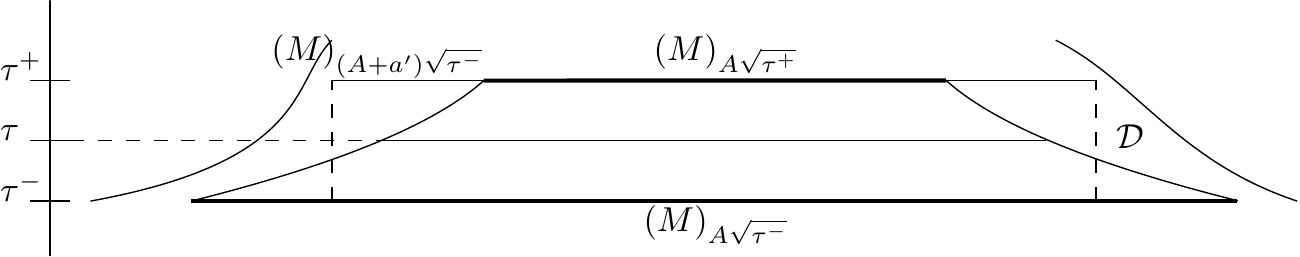}
  \end{center}
  
  Now if $A$ is such that $(a+a'+A) \sqrt{\tau^-} \leq A \sqrt{\tau}$ (which justifies the choice we finally make of $A=A(v_0)= (a+a') \left(\sqrt{1+\frac{1}{2 C^2 K_0}}-1\right)^{-1}$) this implies first that $(M)_{A \sqrt{\tau}} 
  \subset (\calD_{\tau^+})_{a \sqrt{\tau^+}}$ hence also 
  \begin{equation} \label{final_proof_inclusion}
   (M)_{A \sqrt{t}} \subset (\calD_{t})_{a \sqrt{t}}
  \end{equation}
  for every $\tau \leq t \leq \tau^+$. But then for such $ \tau \leq t \leq \tau^+$, proposition $\ref{main_proposition}$ applied this time on the domain $\calD_t \times [0, t]$ yields the estimates $(\ref{final_proof_estimates})$
  for $(x,t) \in (\calD_t)_{a \sqrt{t}}$ thus on $(M)_{A \sqrt{t}}$. This, together with $(\ref{final_proof_inclusion})$, is a contradiction with the definition of $\tau$.

 \end{proof}

\appendix

\section{Appendix A - Local minimum principles} \label{appendix_local}

In this appendix we recall the ideas of the proof of theorem $\ref{ricci_pinching}$. The analogue of Hamilton-Ivey's pinching inequality for the lowest value of the Ricci tensor is due to Z.H. Zhang and is proved in 
\cite{Zhang_2015}. The local version stated in theorem $\ref{ricci_pinching}$ is obtained by transposing verbatim the argument used by B. L. Chen in \cite{Chen_2013} to establish a local version of the usual Hamilton-Ivey estimate. 

\bigskip

Minimum principles for heat-type equations are in general non local in nature. For instance, we know that non-negative Ricci curvature is preserved by any three-dimensional complete Ricci flow, yet it is easy to produce an example 
with non-negative Ricci curvature in an arbitrary ball around a point $x_0$, such that the lowest eigenvalue of the Ricci tensor nevertheless ends toward $-\infty$ at $x_0$ for arbitrarily short time when evolved
by the Ricci flow. The neckpinch example on $\setS^2 \times \setR$ depicted below suggests this

\begin{center}
 \includegraphics{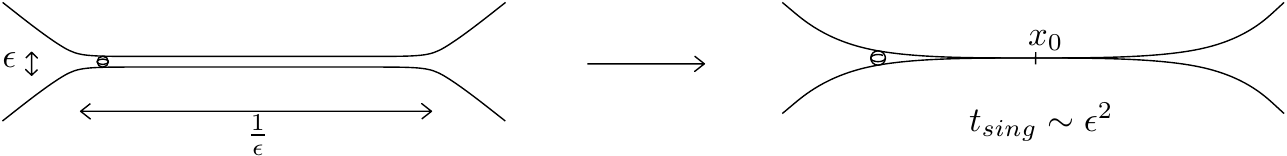}
\end{center}

If $g(t)$ is a Ricci flow on a domain $\calU \subset M^3 \times [0,T]$, we define, at each point $(x,t) \in \calU$ where $\lambda_1(x,t)$, the lowest eigenvalue of the Ricci tensor, is negative,
$u(x,t) = \frac{\scal}{- \lambda_1} - \ln(- \lambda_1 (1+t)) + 3$, while we put $u(x,t) = + \infty$ at points where $\lambda_1(x,t) \geq 0$. 

\begin{proposition} \label{proposition_pinching_equation}
 At every point where $u<0$ one has
 \begin{equation} \label{equation_pinching}
  (\del_t - \Delta) u \geq \frac{u^2}{2(1+t)}.
 \end{equation}
\end{proposition}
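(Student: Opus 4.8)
The plan is to recognise Proposition~\ref{proposition_pinching_equation} as the pointwise heart of the Hamilton--Ivey pinching estimate --- in the form, due to Zhang, phrased for the least eigenvalue of the Ricci tensor --- and to prove it by the usual curvature-ODE computation, localised exactly as in B.-L.~Chen's treatment of the ordinary Hamilton--Ivey inequality. The genuine inputs are two evolution facts in dimension three. First, $(\del_t-\Delta)\scal = 2|\ricci|^2$, and since in dimension three $|\ricci|^2=\lambda_1^2+\lambda_2^2+\lambda_3^2$ with $\lambda_2^2+\lambda_3^2\ge\frac12(\lambda_2+\lambda_3)^2=\frac12(\scal-\lambda_1)^2$, this gives $(\del_t-\Delta)\scal\ge 2\lambda_1^2+(\scal-\lambda_1)^2$. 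Second, the least eigenvalue $\lambda_1$ of the Ricci tensor obeys, in the barrier sense, $(\del_t-\Delta)\lambda_1\ge Q(\lambda_1,\scal)$ for an explicit quadratic $Q$; to obtain this I would pass to Uhlenbeck's trick so that the curvature operator evolves by a genuine reaction--diffusion equation whose diffusion part has the favourable sign at an interior spatial minimum, diagonalise it ($a\ge b\ge c$), invoke Hamilton's support-function trick to handle coincident eigenvalues, and rewrite the three-dimensional reaction (in Hamilton's normalisation $\dot a=a^2+bc$, $\dot b=b^2+ac$, $\dot c=c^2+ab$) in the variables $\lambda_1=b+c$ and $\scal=2(a+b+c)$. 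This yields an inequality of the shape $(\del_t-\Delta)(-\lambda_1)\le(-\lambda_1)(\scal-\lambda_1)$, the discarded slack being a nonnegative multiple of $(\lambda_2-\lambda_3)^2$.

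With these in hand the proof is a bookkeeping computation. Work at a point where $u<0$; there $\lambda_1<0$, so set $f=-\lambda_1>0$. Since $\scal\ge 3\lambda_1$ (the trace is at least three times the least eigenvalue) one has $\scal/f\ge-3$, hence $u\ge-\ln(f(1+t))$, and because $u<0$ this forces $f(1+t)>1$ --- this is precisely the purpose of the additive constant $3$ in the definition of $u$. Now apply the heat-operator identities for products, quotients and logarithms to $u=\frac{\scal}{f}-\ln(f(1+t))+3$ and substitute the two inequalities above together with the dimension-three expression for $|\ricci|^2$. The terms proportional to $(\scal-\lambda_1)^2$ arising from the evolution of $\scal$ and from that of $\lambda_1$ cancel; after discarding the remaining manifestly nonnegative reaction terms and the first-order term $2\langle\nabla u,\nabla\ln f\rangle$, which vanishes at a spatial minimum of $u$ (the configuration in which this inequality is fed into the localised maximum principle of Proposition~\ref{ricci_pinching}), one is left with $(\del_t-\Delta)u\ge 2f-\frac1{1+t}$. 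Writing $s:=f(1+t)>1$, one has $u^2\le(\ln s)^2$ since $-\ln s\le u<0$, and the elementary inequality $(\ln s)^2\le 4s-2$ holds for all $s\ge1$; therefore $2f-\frac1{1+t}=\frac{2s-1}{1+t}\ge\frac{(\ln s)^2}{2(1+t)}\ge\frac{u^2}{2(1+t)}$, which is the claim. The weight $(1+t)$ inside $u$ is exactly what makes these final numerics close.

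The main obstacle is the second input: making rigorous sense of the evolution inequality for the non-smooth function $\lambda_1$ and extracting the precise reaction $Q$ with the right signs, which is where Uhlenbeck's trick and Hamilton's support-function argument enter and where I would follow Chen's localisation essentially verbatim, as the paper announces. A secondary delicacy is sign management: on part of the region $\{u<0\}$ --- the near-hyperbolic configurations, where $\ricci$ is close to negative definite --- both $\scal$ and $\scal-\lambda_1=\lambda_2+\lambda_3$ are negative, so one must take care that every one-sided estimate used points the favourable way; there one supplements the above by the Cauchy--Schwarz bound $(\del_t-\Delta)\scal\ge\frac23\scal^2$, which is large precisely in that regime, and keeps the $(\lambda_2-\lambda_3)^2$ slack in play. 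Once this is arranged, the cancellation and the one-variable inequality are routine.
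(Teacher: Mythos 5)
Your main regime and your endgame are sound, and they essentially reproduce the paper's computation: when $\lambda_2+\lambda_3=\scal-\lambda_1\ge 0$ the substitution scheme is legitimate, the $(\scal-\lambda_1)^2$ terms do cancel, and the resulting bound $(\del_t-\Delta)u\ge 2(-\lambda_1)-\frac{1}{1+t}$ together with the elementary inequality $(\ln s)^2\le 4s-2$, $s=-\lambda_1(1+t)>1$, finishes the argument (the paper reaches the same point from the exact reaction identity for $u$ and concludes with $e^{-u}-1\ge \frac{u^2}{2}$ instead; the two endgames are interchangeable).

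The gap is the other regime, and the patch you propose does not repair it. Writing $f=-\lambda_1>0$, the evolution of $\lambda_1$ enters your computation through the term $-\frac{\scal+f}{f^2}(\del_t-\Delta)f$, and your one-sided input $(\del_t-\Delta)f\le f(\scal+f)$ produces a lower bound only when the coefficient $-\frac{\scal+f}{f^2}$ is nonpositive, i.e. exactly when $\lambda_2+\lambda_3\ge0$. When $\lambda_2+\lambda_3<0$ you would need the reverse inequality, i.e. an upper bound on $(\del_t-\Delta)\lambda_1$; but for the smallest eigenvalue the support-function/barrier machinery only ever yields lower bounds (both the $(\lambda_2-\lambda_3)^2$ slack and the discarded eigenvector-derivative terms are nonnegative contributions to $(\del_t-\Delta)\lambda_1$), so within your framework the term $-\frac{\scal+f}{f^2}(\del_t-\Delta)f$ has no a priori lower bound at all in that regime, and no extra positivity from $(\del_t-\Delta)\scal\ge\frac{2}{3}\scal^2$ can compensate. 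What actually closes this case, and is the content of the paper's proof, is to compute the reaction of $u$ as a function of the full Ricci operator exactly, keeping the $(\lambda_2-\lambda_3)^2$ term with its coefficient: with $\gothl\ge\gothm\ge\gothn$ the Ricci eigenvalues one gets $(\del_t-\Delta)u\ \ge\ 2(-\gothn)+\frac{(\gothl-\gothm)^2(\gothl+\gothm-\gothn)}{\gothn^2}-\frac{1}{1+t}$ (modulo the usual maximum-principle conventions), and when $\gothl+\gothm-\gothn<0$ one uses $(\gothl-\gothm)^2\le\gothm^2$ and $\gothl+\gothm-\gothn\ge\gothm$ to bound the middle term below by $\frac{\gothm^3}{\gothn^2}\ge\gothn=-f$, leaving $f-\frac{1}{1+t}$, which still suffices for your final inequality since $(\ln s)^2\le 2(s-1)$ for $s\ge1$. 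This estimate is absent from your sketch and cannot be generated by plugging separate one-sided bounds for $\scal$ and $\lambda_1$ into the formula for $u$. (Two minor remarks: your normalisation is internally inconsistent --- with $\lambda_1=b+c$ and $\scal=2(a+b+c)$ one cannot use $\dot a=a^2+bc$ literally, although the invariant inequality you quote, with slack a multiple of $(\lambda_2-\lambda_3)^2$, is the correct one; and the proposition asserts a pointwise inequality wherever $u<0$, while you discard $\frac{2}{f}\langle\nabla f,\nabla u\rangle$ by appealing to a minimum point --- an informality the paper shares, its appendix being explicitly a sketch of Zhang's and Chen's arguments.)
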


Now a non-linear equation of the form $(\ref{equation_pinching})$ is amenable to a localized version of the minimum principle. Consider indeed a smooth cut-off function $\Phi(x,t)$ with values in $[0,1]$, support in $\calU$
with $\Phi \equiv 1$ on a domain $\calV \subset \calU$. If one considers $v= \Psi u$, then at a minimum $(\ux,\ut)$ of $v$ on $\calU$ (assuming $v(\ux,\ut) < 0$ and $\ut>0$), one has
\[ 0 \geq (\del_t - \Delta) v \geq ( \del_t \Phi - \Delta \Phi + 2 \frac{\normes{ \nabla \Phi }^2}{\Phi} ) u + \frac{u^2}{2(1+t)}  \]
Thus if one chooses $\Phi$ such that an upper bound
\[ \del_t \Phi - \Delta \Phi + 2 \frac{\normes{ \nabla \Phi }^2}{\Phi} \leq \calC(\Phi) \]
holds on $\calU$, one gets $0 \geq \left( \calC(\Phi) + \frac{v}{2(1+t)} \right) v$, whence $v(\ux,\ut) \geq - 2 (1+t) \calC(\Phi)$. In particular, one deduces
\begin{equation} \label{minoration_2}
 u(x,t) \geq - \min( \inf u(x,0), 2 (1+t) \calC(\Phi))
\end{equation}
on $\calV$.

\bigskip

Now constructing good cut-off functions under $\frac{K}{t}$ curvature bounds is standard. Choose a smooth function $\phi: \setR \rightarrow [0,1]$ such that $\phi \equiv 1 $ on $]-\infty,0]$, $\phi \equiv 0$ on $[1,+\infty[$ and 
such that $\phi'' + 2 \frac{\phi'^2}{\phi} \leq 100$.

\begin{lemma}\label{lemme_cutoff}
  Let $g(t)$ be a Ricci flow on $M^n \times [0,T]$ and $x_0 \in M$ such that the following holds
 \begin{align*}
  & B_t(x, a+ A) \text{ is relatively compact } M, \\
  & \normes{ \riem g(t) } \leq \frac{K}{t} \text{ for } x \in B_t(x_0,\sqrt{\frac{t}{K}}).
 \end{align*}
 Then the smooth function defined by
 \[ \Phi(x,t) = \phi \left( \frac{d_t(x_0,x)+\frac{10 (n-1)}{3} \sqrt{K t} - a}{A} \right) \]
 satisfies the following properties
 \begin{align*}
  & \Phi(x,t) = 1 \text{ si } d_t(x_0,x) \leq a-\frac{10 (n-1)}{3} \sqrt{K t}, \\
  & \Phi(x,t)=0 \text{ si } d_t(x_0,x) \geq a+ A, \\
  & \del_t \Phi - \Delta \Phi + 2 \frac{\normes{ \nabla \Phi }^2}{\Phi} \leq \frac{100}{A^2}.
 \end{align*}
\end{lemma}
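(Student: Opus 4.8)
The plan is to reduce everything to showing that the ``support function''
$\psi(x,t) := d_t(x_0,x) + \tfrac{10(n-1)}{3}\sqrt{Kt} - a$
is a barrier-sense supersolution of the heat equation away from $x_0$, i.e. $(\del_t-\Delta)\psi \geq 0$. Granting that, the first two bullet points are immediate from the shape of $\phi$ (it equals $1$ on $]-\infty,0]$ and $0$ on $[1,+\infty[$, and the correction $\tfrac{10(n-1)}{3}\sqrt{Kt}$ is nonnegative), and the third is a formal computation with $\Phi=\phi(\psi/A)$. So the only real content is the inequality on $\psi$, which I would read — exactly as in Hamilton's distance estimates recalled before Corollary \ref{lemme_distorsion_distances} — in the barrier sense, at points $(x,t)$ with $x\neq x_0$ and $0<t\leq T$.

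To get that inequality, fix such a point with $d_t(x_0,x)<a+A$ (otherwise $\Phi\equiv 0$ nearby and there is nothing to check), pick a minimizing $g(t)$-geodesic $\gamma$ from $x_0$ to $x$ — which by relative compactness of $B_t(x_0,a+A)$ exists and stays in that ball — and note that its initial sub-arc of length $r_0:=\sqrt{t/K}$ lies in $B_t(x_0,r_0)$, where the hypothesis gives $\normes{\riem g(t)}\leq K/t$ and hence $\ricci g(t)\leq (n-1)\tfrac{K}{t}$. This is precisely the setting of Perelman's estimate for the evolving distance function (\cite{Perelman_2002}, \S 8; see also \cite{Kleiner_Lott}): combining the first-variation identity $\del_t d_t(x_0,\cdot)=-\int_\gamma \ricci(\dot\gamma,\dot\gamma)$ with the index-form upper bound for $\Delta d_t(x_0,\cdot)$ produced from the standard test fields (equal to a parallel orthonormal frame along $\gamma$ past arclength $r_0$, linearly damped to $0$ on $[0,r_0]$), the two $\int_\gamma\ricci$ contributions cancel and only an error supported on $\gamma|_{[0,r_0]}$ remains, which yields
\[ \bigl(\del_t - \Delta\bigr)\,d_t(x_0,\cdot) \ \geq\ -(n-1)\Bigl(\tfrac23 \cdot \tfrac{K}{t}\, r_0 + \tfrac{1}{r_0}\Bigr) \ =\ -\tfrac{5(n-1)}{3}\sqrt{\tfrac{K}{t}} \]
at $(x,t)$ in the barrier sense (upper support function for $d_t(x_0,\cdot)$ smooth near $x$, obtained by nudging the basepoint slightly along $\gamma$ in the manner of Calabi; time derivative interpreted through $\dmdtm$). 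Since $\tfrac{10(n-1)}{3}\sqrt{Kt}$ is spatially constant with $t$-derivative $\tfrac{5(n-1)}{3}\sqrt{K/t}$, the two terms cancel and $(\del_t-\Delta)\psi\geq 0$ follows.

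For the last step, write $w:=\psi/A$ and use $\normes{\nabla\psi}=\normes{\nabla d_t(x_0,\cdot)}=1$. Where $\Phi$ is locally constant ($w\leq 0$ or $w\geq 1$) the inequality is trivial — and the term $2\normes{\nabla\Phi}^2/\Phi$ is only ever used where $\Phi>0$ anyway — so assume $0<w<1$, where $\phi'\leq 0$. A direct computation then gives
\[ \del_t\Phi - \Delta\Phi + 2\frac{\normes{\nabla\Phi}^2}{\Phi} = \frac{\phi'(w)}{A}(\del_t - \Delta)\psi + \frac{1}{A^2}\Bigl(-\phi''(w) + 2\frac{\phi'(w)^2}{\phi(w)}\Bigr), \]
where the first summand is $\leq 0$ by the previous paragraph and the second is $\leq 100/A^2$ by the choice of $\phi$. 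The step I expect to carry all the difficulty is the combined $(\del_t-\Delta)$ bound on $d_t(x_0,\cdot)$: the construction works only because the $\int_\gamma\ricci$ terms from $\del_t d_t$ and from the Laplacian comparison cancel, which is exactly what lets us use a curvature bound near $x_0$ alone — we do not control the curvature along the rest of $\gamma$ — and it requires the usual care with barriers for the non-smooth distance function.
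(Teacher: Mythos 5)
Your proposal is correct and follows essentially the paper's own route: the cited Lemma 27.18 of Kleiner--Lott is precisely the barrier-sense bound $(\del_t-\Delta)d_t(x_0,\cdot)\geq -(n-1)\bigl(\tfrac23 \tfrac{K}{t} r_0 + r_0^{-1}\bigr)$ with $r_0=\sqrt{t/K}$ that you derive, and the rest is the same chain-rule computation with $\normes{\nabla d_t}=1$ and the cancellation against $\tfrac{d}{dt}\tfrac{10(n-1)}{3}\sqrt{Kt}$. Note only that the quantity you bound, $-\phi''+2\phi'^2/\phi\leq 100$, is indeed the one needed, the paper's stated condition $\phi''+2\phi'^2/\phi\leq 100$ being a sign slip.
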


\begin{proof}
 Straightforward computation involving Lemma 27.18 in \cite{Kleiner_Lott}.
\end{proof}

With this choice of a cut-off function, $(\ref{minoration_2})$ directly yields theorem $\ref{ricci_pinching}$.

\begin{proof}[Proof of proposition \ref{proposition_pinching_equation}]
 We know that in an appropriate coordinate system, the curvature operator $\calM(x,t)$ satisfies the equation
 \[ \del_t \calM = ^t \Delta \calM + F(\calM) \]
 where 
 \[F (M) = Q \left( \begin{matrix} \lambda^2 + \mu \nu & & \\ & \mu^2 + \lambda \nu & \\ & & \nu^2 + \lambda \mu \end{matrix} \right) Q^{-1} \]
 if $M=Q \left( \begin{matrix} \lambda & & \\ & \mu & \\ & & \nu \end{matrix} \right) Q^{-1}$. From this it is easy to see that if we define $\calN$ from $\calM$ by
 \[ \calN(x,t) = Q \left( \begin{matrix} \frac{\lambda+\mu}{2} & & \\ & \frac{\lambda+\nu}{2} & \\ & & \frac{\mu+\nu}{2} \end{matrix} \right) Q^{-1} \]
 where $\calM=Q \left( \begin{matrix} \lambda & & \\ & \mu & \\ & & \nu \end{matrix} \right) Q^{-1}$, (i.e. $\calN$ is the matrix of the Ricci tensor in these coordinates), then it satisfies
 \[ \del_t \calN = ^t \Delta \calN + G(\calN) \]
 where 
 \[G (N) = Q \left( \begin{matrix} (\gothm+\gothn) \gothl + (\gothm-\gothn)^2 & & \\ & (\gothl+\gothn) \gothm + (\gothl-\gothn)^2 & \\ & & (\gothl+\gothm) \gothn +(\gothl-\gothm)^2 \end{matrix} \right) Q^{-1} \]
 if $N=Q \left( \begin{matrix} \gothl & & \\ & \gothm & \\ & & \gothn \end{matrix} \right) Q^{-1}.$
 
 \bigskip
 
 Thus in term of the function $u$, a straightforward computation yields
 \[ (\del_t -\Delta) u  = \frac{(\gothl-\gothm)^2 (\gothl+\gothm-\gothn)+2 (-\gothn)^3}{\gothn^2} - \frac{1}{1+t}, \]
 where $\gothl \geq \gothm \geq \gothn$ are the eigenvalues of $\calN$. In the case when  $\gothl+\gothm-\gothn \geq 0$ one gets
 \begin{equation} \label{minoration_1}
  (\del_t - \Delta) u \geq - \gothn - \frac{1}{1+t}
 \end{equation}
 whereas if $\gothl+\gothm-\gothn <0$, then $\gothm<\gothl<0$ hence $(\gothl-\gothm)^2 \leq  \gothm^2$. Since $\gothl+\gothm-\gothn \geq \gothm$,
 \[ (\del_t-\Delta) u \geq \frac{\gothm^3 + 2 (-\gothn)^3}{(-\gothn)^2} - \frac{1}{1+t} \]
 whence $(\ref{minoration_1})$ in this case too. Writing $\ln (-\gothn)(1+t) = - u + \frac{\gothl+\gothm+\gothn}{-\gothn} + 3$, and using $\frac{\gothl+\gothm+\gothn}{-\gothn} \geq 3 \gothn$, one gets from $(\ref{minoration_1})$
 \[ (\del_t-\Delta) u \geq \frac{1}{1+t} ( (-\gothn)(1+t) - 1 ) \geq \frac{e^{-u}-1}{1+t}. \]
 whence the proposition.
 
\end{proof}

\section{Appendix B - Classical results for flows of bounded curvature}

The following proposition is theorem 3.1 in \cite{Chen_2009}:

\begin{proposition} \label{misc_0}
 There exists $\Lambda(n,C,K)$ with the following property. If $(U,g(t))$ is a Ricci flow on $[0,T]$ such that 
 \begin{align*}
  & B_t(x_0,1) \text{ is relatively compact in $U$}, \\
  & \normes{ \riem g(x,t) } \leq \frac{K}{t},
 \end{align*}
 for $x \in B_t(x_0,1)$ and $0 < t \leq T$, and such that at initial time
 \[ \normes{ \riem g(x,0) } \leq C \]
 on $B_0(x_0,1)$ then
 \[ \normes{ \riem g(x,t) } \leq \frac{\Lambda}{(1-d_t(x_0,x))^2}, \] 
 for $x \in B_t(x_0, 1)$, $0 \leq t  \leq T$.
\end{proposition}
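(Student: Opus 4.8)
The plan is to first reduce the statement to a curvature bound \emph{at the centre} $x_0$, and then establish that bound by a rescaling/contradiction argument whose only non-elementary input is the standard local persistence of curvature bounds along the Ricci flow.

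\textbf{Reduction to the centre.} First I would argue that it suffices to produce $\Lambda_0(n,C,K)$ with $\normes{\riem g(x_0,t)}\le\Lambda_0$ for all $0\le t\le T$. Indeed, fix $x\in B_t(x_0,1)$ and set $\mu=1-d_t(x_0,x)\in(0,1]$. If $\tfrac{20}{3}(n-1)\sqrt{Kt}\ge\mu/2$, then $t$ is at least a dimensional multiple of $\mu^2/K$, and the a priori hypothesis already gives $\normes{\riem g(x,t)}\le K/t\le C(n)K^2/\mu^2$. Otherwise, the distance--distortion estimates recorded above (Corollaries \ref{lemme_distorsion_distances} and \ref{corollaire_distorsion_distances}) give $d_s(x_0,x)\le 1-\mu/2$ for every $0\le s\le t$, hence $B_s(x,\mu/2)\subset B_s(x_0,1)$, so the restriction of $g$ to these balls again satisfies the hypotheses of the proposition with $x$, $\mu/2$ in the role of $x_0$, $1$. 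Scaling the metric by $(2/\mu)^2$ (which leaves the bound $\normes{\riem}\le K/t$ invariant and only lowers the constant $C$) brings the radius to $1$, and the centre case yields $\normes{\riem g(x,t)}\le(2/\mu)^2\Lambda_0$. One then concludes with $\Lambda=\max\{4\Lambda_0,\,C(n)K^2,\,C\}$, the last term covering $t=0$.

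\textbf{The centre bound, by contradiction.} Suppose the centre bound fails: there are Ricci flows $(U_i,g_i)$ on $[0,T_i]$ satisfying the hypotheses with the same $C,K$, and times $t_i$ with $Q_i:=\normes{\riem g_i(x_{0,i},t_i)}\to\infty$. Applying the a priori bound at $(x_{0,i},t_i)$ forces $Q_it_i\le K$, so in particular $t_i\to0$. I would then pass to the rescaled flows $h_i(s)=Q_i\,g_i(t_i+s/Q_i)$, $s\in[-Q_it_i,0]$; at $s=-Q_it_i$ (the original time $0$) one has $\normes{\riem h_i}\le C/Q_i$ on $B_{h_i(-Q_it_i)}(x_{0,i},\sqrt{Q_i})$, a ball relatively compact in $U_i$ by hypothesis. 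The local persistence of curvature bounds then provides $c(n)>0$ such that $\normes{\riem h_i(x_{0,i},s)}\le 2C/Q_i$ as long as $s+Q_it_i\le\min\big(Q_it_i,\,c(n)Q_i/C,\,c(n)Q_i\big)$. Since $Q_it_i\le K$ stays bounded while $c(n)Q_i/C$ and $c(n)Q_i$ tend to $+\infty$, for $i$ large this minimum equals $Q_it_i$, so the bound propagates up to $s=0$. But $\normes{\riem h_i(x_{0,i},0)}=Q_i^{-1}\normes{\riem g_i(x_{0,i},t_i)}=1$, whereas $2C/Q_i\to0$ — a contradiction. (Note this route uses no volume non-collapsing and extracts no limit, so it is insensitive to the possibility of collapsing at time $0$, which is not excluded by the hypotheses.)

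\textbf{Where the work is.} The only ingredient not already at hand is the \emph{local persistence of curvature bounds}: for a Ricci flow with $\normes{\riem}\le\eta$ on a relatively compact ball $B_{g(0)}(p,\varrho)$, one has $\normes{\riem g(p,t)}\le 2\eta$ for $0\le t\le\min\big(c(n)\eta^{-1},c(n)\varrho^2\big)$ within the interval of existence. This is standard — it follows from the local version of Shi's estimates (\cite{Chow_2008}) together with a short continuity argument on $(\del_t-\Delta)\normes{\riem}\le c(n)\normes{\riem}^2$; in the situation above the continuity argument is moreover facilitated by the a priori bound $\normes{\riem h_i(s)}\le K/(Q_it_i+s)$, which keeps curvature controlled on $B_{h_i(s)}(x_{0,i},\sqrt{Q_i})$ for every $s>-Q_it_i$. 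Pinning down this lemma in precisely that local (and collapsing-free) form is the step I expect to require the most care; everything else is scaling, the distance--distortion estimates of Section~3, and the elementary reduction above.
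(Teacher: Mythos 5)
Your reduction to a curvature bound at the centre (splitting according to whether $\frac{20}{3}(n-1)\sqrt{Kt}\ge\mu/2$, then rescaling $B_s(x,\mu/2)\subset B_s(x_0,1)$) is sound, and the rescaling/contradiction skeleton with $Q_it_i\le K$ is reasonable. But note that the paper does not prove this proposition at all: it is quoted verbatim as Theorem 3.1 of \cite{Chen_2009}, so what you are really attempting is a proof of Chen's theorem, and the entire difficulty has been pushed into the step you call ``local persistence of curvature bounds''. That lemma, in the unconditional form you state it (only $\normes{\riem}\le\eta$ on $B_{g(0)}(p,\varrho)$ at time $0$, no hypothesis at positive times), is not a standard result; to my knowledge it is not known. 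Local Shi estimates cannot yield it, because they presuppose a curvature bound on a whole parabolic cylinder $B\times[0,T]$, and any attempt to localize the doubling-time argument needs a spatial cutoff whose evolution is controlled through distance distortion, i.e. through curvature bounds at positive times --- exactly what one is trying to establish. Breaking this circularity is precisely the role of the $K/t$ hypothesis in the proposition, and it is why results of this type always carry an extra assumption ($K/t$ bounds, completeness with bounded curvature, non-collapsing, or Ricci lower bounds in dimension three).

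In the conditional form that your application does permit --- using the a priori bound $\normes{\riem h_i(x,s)}\le K/(s+Q_it_i)$ on $B_{h_i(s)}(x_{0,i},\sqrt{Q_i})$ --- the needed lemma is essentially the proposition itself, and your sketch does not close it. Right after the initial time the available bound is only of order $K/\sigma$ (it ``keeps curvature controlled'' only in this blowing-up sense), so at an interior maximum of a cutoff-weighted quantity $\varphi\,\normes{\riem}$ the quadratic reaction term is bounded only by $(K/\sigma)\cdot\varphi\normes{\riem}$, and the Gronwall factor $\exp\bigl(\int_0^{s}K\,\sigma^{-1}d\sigma\bigr)$ diverges; a ``short continuity argument'' on $(\del_t-\Delta)\normes{\riem}\le c(n)\normes{\riem}^2$ therefore does not propagate the tiny bound $C/Q_i$ up to $s=0$. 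Handling this interplay between the cutoff, the distance distortion and the $K/t$ background is the actual content of Chen's proof (a point-picking/maximum-principle argument of some length), not a routine verification. So the proposal has a genuine gap: the one ingredient you defer as ``standard'' is, as stated, an open-type statement, and in the form usable here it is the theorem to be proved.
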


This can be refined into the following ``non-doubling time'' property (this is essentially 2.4 in \cite{Xu_2015})

\begin{corollary} \label{misc_1}
 There exists $\epsilon_{\ref{misc_1}}(n,K)$ with the following property. If $(U,g(t))$ is a Ricci flow such that 
 \begin{align*}
  & B_t(x_0,1) \text{ is relatively compact in $U$}, \\
  & \normes{ \riem g(x,t) } \leq \frac{K}{t},
 \end{align*}
 for $x \in B_t(x_0,1)$ and $0 < t \leq \epsilon_{\ref{misc_1}}^2$, and such that at initial time
 \[ \begin{array}{cc} \normes{ \riem g(x,0) } \leq 1, & \inj{g(0)} x \geq 1, \end{array} \]
 on $B_0(x_0,1)$ then
 \[ \begin{array}{cc} \normes{ \riem g(x,t) } \leq 4, & \inj{g(t)} x \geq \frac{1}{2}, \end{array} \] 
 on $B_t(x_0, \epsilon_{\ref{misc_1}})$ for $0 \leq t  \leq \epsilon_{\ref{misc_1}}^2$.
\end{corollary}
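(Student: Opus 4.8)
The plan is to reduce, via proposition \ref{misc_0}, to a flow with a \emph{uniform} curvature bound on a fixed ball, deduce the curvature estimate by a localised maximum principle for $\normes{\riem g}^2$, and then obtain the injectivity radius estimate by comparing $g(t)$ with $g(0)$ on that ball. \textbf{Step 1 (a uniform curvature bound).} Apply proposition \ref{misc_0} with $C=1$: it produces $\Lambda=\Lambda(n,K)$ such that $\normes{\riem g(x,t)}\le\Lambda(1-d_t(x_0,x))^{-2}$ on $B_t(x_0,1)$ for $0<t\le\epsilon^2$, hence, writing $\Lambda_1:=64\Lambda$, the uniform bound $\normes{\riem g(x,t)}\le\Lambda_1$ on $B_t(x_0,\frac78)$ for $0\le t\le\epsilon^2$ (here the hypothesis that $B_t(x_0,1)$ be relatively compact in $U$ is used).

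\textbf{Step 2 (the curvature estimate).} Put $f=\normes{\riem g}^2$, so that $(\del_t-\Delta)f\le c(n)\normes{\riem g}^3\le c(n)\Lambda_1 f$ on $B_t(x_0,\frac78)$. Using the uniform bound $\normes{\riem g}\le\Lambda_1$ in place of the $\frac Kt$ bound, I would build, exactly as in lemma \ref{lemme_cutoff}, a space-time cut-off $\Phi$ supported in $B_t(x_0,\frac78)$, equal to $1$ on $B_t(x_0,\frac34)$, with $\del_t\Phi-\Delta\Phi+2\normes{\nabla\Phi}^2\Phi^{-1}\le c(n)(\Lambda_1+1)$. Then I would run Hamilton's maximum principle on $M(t)=\max_x(\Phi f)(x,t)$: since $\normes{\riem g(x,0)}\le1$ on $B_0(x_0,1)$ one has $M(0)\le1$, while at a maximum point (where $\Delta(\Phi f)\le0$ and $\nabla(\Phi f)=0$) one gets $\dmdtm M\le c(n,K)(1+M)$, using $f\le\Lambda_1^2$ on $\supp\Phi$. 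Integrating, $M(t)\le2$ for $0\le t\le\epsilon^2$ as soon as $\epsilon\le\epsilon(n,K)$, whence $\normes{\riem g(x,t)}\le\sqrt2\le4$ on $B_t(x_0,\frac34)$, in particular on $B_t(x_0,\epsilon)$ for $\epsilon<\frac14$.

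\textbf{Step 3 (the injectivity radius estimate).} From $\normes{\ricci g}\le c(n)\Lambda_1$ on $B_s(x_0,\frac78)$ for $s\le t$ and $\delt g=-2\ricci g$, integrating in time shows that $g(t)$ is $(1+\delta)$-bi-Lipschitz to $g(0)$ on $B_t(x_0,\frac34)$, with $\delta=\delta(\epsilon)\to0$, and for $\epsilon$ small this ball lies in $B_0(x_0,1)$. Together with the hypotheses $\normes{\riem g(0)}\le1$, $\inj{g(0)}x\ge1$ on $B_0(x_0,1)$ (which also yield a uniform volume lower bound there) and the bound $\normes{\riem g(t)}\le\sqrt2$ from Step 2 (which keeps the conjugate radius of $g(t)$ above $\frac12$ at points of $B_t(x_0,\epsilon)$), this should force $\inj{g(t)}x\ge\frac12$ on $B_t(x_0,\epsilon)$ once $\epsilon$ is small: a $g(t)$-geodesic loop of length $<1$ based at such an $x$ would lie in $B_0(x_0,1)$ with $g(0)$-length $<1+\delta$, and straightening it (minimise in its based homotopy class, using the bounded geometry of $(B_0(x_0,1),g(0))$ for the contractible case) would produce a $g(0)$-geodesic loop at $x$ of length $<2$, contradicting $\inj{g(0)}x\ge1$.

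I expect Step 3 to be the main obstacle: making the "straightening" heuristic rigorous requires either a quantitative "almost-minimisers are close to minimisers" statement in a ball of bounded geometry, or — cleanest — a contradiction/compactness argument. For the latter, if the corollary failed one would obtain flows satisfying the hypotheses with a fixed $K$ but $\epsilon_i\to0$ and points $x_i\in B_{t_i}(x_{0,i},\epsilon_i)$, $t_i\le\epsilon_i^2$, with $\inj{g_i(t_i)}x_i<\frac12$ (Step 2 already excludes a violation of the curvature bound); the pointed initial balls $\calC^{1,\alpha}$-subconverge by Gromov--Cheeger compactness from the uniform curvature and volume bounds, the metrics $g_i(t_i)$ converge to the same limit by the uniform bi-Lipschitz and curvature estimates, and continuity of the injectivity radius under this convergence gives $\tfrac12\ge\inj{g_\infty}x_\infty\ge1$, absurd — the only delicate point being the regularity bookkeeping for the convergence of $g_i(t_i)$ as $t_i\to0$. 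Finally, $\epsilon_{\ref{misc_1}}(n,K)$ is taken to be the minimum of the finitely many smallness thresholds from Steps 2 and 3.
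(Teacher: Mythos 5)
Your Steps 1 and 2 are fine and are the standard route (the paper itself offers no proof beyond citing proposition \ref{misc_0}, i.e. Chen's local estimate, together with Lemma 2.4 of \cite{Xu_2015}): Chen's estimate gives a uniform bound $\Lambda_1(n,K)$ on $B_t(x_0,\frac78)$, and a localized maximum principle for $\normes{\riem}^2$ with a cut-off as in lemma \ref{lemme_cutoff} then yields $\normes{\riem g(t)}\leq\sqrt2$ on $B_t(x_0,\frac34)$ for $t\leq\epsilon(n,K)^2$. The genuine gap is Step 3, the injectivity radius bound, which is the only non-routine part of the statement. Your straightening argument cannot work: a $g(t)$-geodesic loop at $x\in B_t(x_0,\epsilon)$ of length $<1$ has $g(0)$-length $<1+\delta$, hence lies in $B_{g(0)}(x,\frac{1+\delta}{2})$, which (since $\inj{g(0)} x \geq 1$, $\normes{\riem g(0)}\leq 1$, and minimizers exist because everything is compactly contained in $U$) is contained in the diffeomorphic image of the exponential ball of $g(0)$ at $x$; the radial contraction through $\exp^{g(0)}_x$ shows the loop is contractible as a based loop, so minimising in its based homotopy class produces the constant loop and no $g(0)$-geodesic loop at all. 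The ``contractible case'' you defer is not an exceptional case — it is the only case, and it is exactly the content of the statement. The compactness fallback does not repair this as written: bi-Lipschitz ($\calC^0$) closeness of $g_i(t_i)$ to $g_i(0)$ plus an $L^\infty$ curvature bound does not make the exponential maps converge, the injectivity radius is not continuous under $\calC^0$ convergence, and the inequality $\inj{g_\infty} x_\infty \geq 1$ for a Cheeger--Gromov $\calC^{1,\alpha}$ limit of the initial metrics (semicontinuity of the injectivity radius with only $L^\infty$ curvature control) is a statement of the same nature as the one being proved, so invoking ``continuity of the injectivity radius under this convergence'' is essentially circular. Note also that replacing Step 3 by volume non-collapsing plus Cheeger--Gromov--Taylor only gives $\inj{g(t)} x \geq c(n)$, not the stated $\frac12$.

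A correct quantitative route, and the reason such ``non-doubling'' lemmas hold with the constant $\frac12$, is: first upgrade $\calC^0$ to $\calC^1$ closeness by integrating Shi's local derivative estimate ($\normes{\nabla \riem g(t)}\leq c(n,K)t^{-1/2}$ on interior balls, so the Christoffel symbols of $g(t)$ and $g(0)$ differ by $O(\sqrt t)$ and $\exp^{g(t)}_x$ is uniformly close to $\exp^{g(0)}_x$ on the $\frac34$-ball of $T_xM$). Then argue on a hypothetical short loop via Klingenberg: if $\inj{g(t)} x<\frac12$, the conjugate radius bound from Step 2 gives a point $y$ and two distinct minimizing $g(t)$-geodesics from $x$ to $y$ of length $<\frac12$ meeting smoothly (angle $\pi$) at $y$; the quantitative injectivity of $\exp^{g(0)}_x$ on that ball (Rauch comparison with $\normes{\riem g(0)}\leq1$ and $\inj{g(0)} x\geq1$ gives $d_{g(0)}(\exp^{g(0)}_x v,\exp^{g(0)}_x w)\geq c(n)\normes{v-w}$) together with the closeness of the two exponential maps forces the two initial velocities to be $O(\sqrt t)$-close, and Rauch for $g(t)$ then forces the terminal velocities at $y$ to be close as well, contradicting the angle $\pi$ once $t\leq\epsilon(n,K)^2$. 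This rules out $g(t)$-geodesic loops at $x$ of length $<1$ and yields $\inj{g(t)} x\geq\min(\mathrm{conj},\frac12)=\frac12$. Without this (or an equivalent) ingredient, your Step 3 does not close.
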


For complete flows of bounded curvature, $\frac{1}{t}$ curvature bound required in the above statements can be obtained by Perelman's pseudo-locality theorem (see \cite{Perelman_2002}, 10 \cite{Kleiner_Lott}, 30 for details, 
and \cite{Chau_2011} for the extension to complete non-compact data),

\begin{corollary} \label{misc_2}
 There exists $\epsilon_{\ref{misc_2}}(n)$ with the following property. Let $(M,g(t))$ be a complete Ricci flow on $[0,T[$ such that
 \[ \sup_{M \times [0,T[} \normes{ \riem g(x,t) } < + \infty \]
 and $x_0 \in M$ be such that at initial time
 \[ \begin{array}{cc} \normes{ \riem g(x,0) } \leq 1, & \inj{g(0)} x \geq 1, \end{array} \]
 on $B_t(x_0,1)$ then
 \[ \begin{array}{cc} \normes{ \riem g(x,t) } \leq 4, & \inj{g(t)} x \geq \frac{1}{2}, \end{array} \] 
 on $B_t(x_0, \epsilon_{\ref{misc_2}})$ for $0 \leq t  \leq \epsilon_{\ref{misc_2}}^2$. 
\end{corollary}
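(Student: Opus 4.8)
The plan is to read off the two estimates directly from Corollary~\ref{misc_1}. That corollary has exactly one hypothesis beyond the present one, namely an a~priori bound $\normes{\riem g(x,t)}\le K/t$ near $x_0$; so the only thing to establish is that such a bound, with $K=K(n)$, holds on a short time interval, and this is precisely what Perelman's pseudo-locality theorem supplies for a complete flow of bounded curvature. The single delicate point is that pseudo-locality requires $g(0)$ to be almost Euclidean near $x_0$, which the hypotheses $\normes{\riem g(\cdot,0)}\le1$, $\inj{g(0)}x\ge1$ on $B_0(x_0,1)$ do \emph{not} guarantee at scale $1$, but \emph{do} guarantee at every sufficiently small scale; so I would apply pseudo-locality after zooming in.

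First I would fix a dimensional constant $\sigma=\sigma(n)\in(0,1)$, to be chosen below, and work on $B_0(x_0,\sigma)\subset B_0(x_0,1)$. There $\normes{\riem g(\cdot,0)}\le1\le\sigma^{-2}$, the sectional curvatures of $g(0)$ are bounded by a dimensional constant, and $\sigma<\inj{g(0)}x_0$; hence, by Günther's and Bishop's volume comparison inequalities (equivalently, by the $(1+c(n)\sigma^2)$-bi-Lipschitz control of $g(0)$ by the flat metric in geodesic normal coordinates on $B_0(x_0,\sigma)$), the volume and isoperimetric ratios on $B_0(x_0,\sigma)$ differ from the Euclidean ones by at most $c(n)\sigma^2$. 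Choosing $\sigma$ so that $c(n)\sigma^2$ lies below the threshold $\delta(n)$ of Perelman's theorem for the parameter $\alpha=1$, and invoking that theorem (legitimate since $(M,g(t))$ is a complete Ricci flow with $\sup_{M\times[0,T[}\normes{\riem g(x,t)}<+\infty$), I obtain $\epsilon=\epsilon(n)\in(0,1)$ with
\[ \normes{\riem g(x,t)}\le\frac1t+\frac1{(\epsilon\sigma)^2}\le\frac2t \qquad\text{whenever } d_t(x_0,x)<\epsilon\sigma,\ 0<t\le(\epsilon\sigma)^2, \]
the second inequality because $(\epsilon\sigma)^{-2}\le t^{-1}$ on this time range. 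Put $r_0:=\epsilon(n)\sigma(n)\in(0,1)$.

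Finally I would feed this $2/t$ bound into Corollary~\ref{misc_1} after rescaling to unit scale: set $\hat g(s):=r_0^{-2}g(r_0^2 s)$, so that $B_{\hat g(s)}(x_0,1)=B_{g(r_0^2 s)}(x_0,r_0)$. Then $\normes{\riem\hat g(x,s)}\le2/s$ on $B_{\hat g(s)}(x_0,1)$ for $0<s\le1$; at $s=0$ one has $\normes{\riem\hat g(\cdot,0)}=r_0^2\normes{\riem g(\cdot,0)}\le r_0^2\le1$ and $\inj{\hat g(0)}x=r_0^{-1}\inj{g(0)}x\ge1$ on $B_{\hat g(0)}(x_0,1)\subset B_{g(0)}(x_0,1)$; and $\hat g$ is complete, so all the balls in question are relatively compact. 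Corollary~\ref{misc_1} with $K=2$ therefore applies and gives $\epsilon_{\ref{misc_1}}=\epsilon_{\ref{misc_1}}(n,2)=\epsilon_{\ref{misc_1}}(n)$ such that $\normes{\riem\hat g(x,s)}\le4$ and $\inj{\hat g(s)}x\ge\frac12$ on $B_{\hat g(s)}(x_0,\epsilon_{\ref{misc_1}})$ for $0\le s\le\epsilon_{\ref{misc_1}}^2$; undoing the rescaling yields $\normes{\riem g(x,t)}\le4 r_0^{-2}$ and $\inj{g(t)}x\ge r_0/2$ on $B_t(x_0,r_0\epsilon_{\ref{misc_1}})$ for $0\le t\le(r_0\epsilon_{\ref{misc_1}})^2$, which is the assertion with $\epsilon_{\ref{misc_2}}:=r_0\epsilon_{\ref{misc_1}}(n)$ — the numerals $4$ and $\frac12$ standing, here exactly as already in Corollary~\ref{misc_1}, for the dimensional constants $4r_0^{-2}$ and $r_0/2$. (If $T\le\epsilon_{\ref{misc_2}}^2$, each step above is carried out on the available subinterval and the conclusion is read on $[0,T[$.)

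There is no hard analytic core: pseudo-locality does all the work and the rest is scale bookkeeping. The one genuinely non-formal ingredient is the standard fact that a bound on $\normes{\riem g(\cdot,0)}$ together with a lower bound on $\inj{g(0)}$ forces almost-Euclidean geometry at small scales — the input pseudo-locality needs — and the only price of the argument is that the rescaling turns the constants $4$ and $\frac12$ into (larger, resp.\ smaller) dimensional constants, which is in any case what those numerals already mean in Corollary~\ref{misc_1}.
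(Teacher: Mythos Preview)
Your approach is correct and is exactly what the paper indicates: the paper gives no proof for this corollary beyond the one-line remark that for complete flows of bounded curvature the $K/t$ bound needed in Corollary~\ref{misc_1} is supplied by Perelman's pseudo-locality (with a reference to \cite{Chau_2011} for the non-compact case). Your write-up is a faithful and careful expansion of that hint, including the correct observation that one must zoom in to a dimensional scale $\sigma(n)$ before the hypotheses $\normes{\riem g(\cdot,0)}\le 1$, $\inj{g(0)}\ge 1$ yield the almost-Euclidean isoperimetric control pseudo-locality requires.

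Your closing caveat about the constants is honest and harmless: the numerals $4$ and $\tfrac12$ in both Corollary~\ref{misc_1} and Corollary~\ref{misc_2} function throughout the paper only as placeholder dimensional constants (every application rescales anyway), so obtaining $4r_0^{-2}$ and $r_0/2$ with $r_0=r_0(n)$ is entirely adequate. If you wanted to sharpen this cosmetically, you could apply pseudo-locality at \emph{every} point of $B_0(x_0,1-\sigma)$ rather than just at $x_0$, obtaining the $2/t$ bound on a $g_0$-ball of radius close to $1$ and then invoking Corollary~\ref{misc_1} after a rescaling arbitrarily close to $1$; but this is not needed.
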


\bibliographystyle{plain}

\bibliography{Biblio.bib}

\end{document}